\let\Setlength\setlength 
\newlength{\arrayrulewidthOriginal}
\newcommand{\Cline}[2]{%
  \noalign{\global\Setlength{\arrayrulewidthOriginal}{\arrayrulewidth}}%
  \noalign{\global\Setlength{\arrayrulewidth}{#1}}\cline{#2}%
  \noalign{\global\Setlength{\arrayrulewidth}{\arrayrulewidthOriginal}}}
\theoremstyle{dotless}
\newtheorem{corollary}{Corollary}[section]
\newtheorem{lemma}[corollary]{Lemma}
\newtheorem{theorem}[corollary]{Theorem}
\newtheorem{proposition}[corollary]{Proposition}
\newtheorem*{theorem*}{Theorem}
\theoremstyle{definition}
\newtheorem{construction}[corollary]{Construction}
\newtheorem{definition}[corollary]{Definition}
\newtheorem{remark}[corollary]{Remark}
\newtheorem{example}[corollary]{Example}
\newtheorem{classification}[corollary]{Classification}
\newtheorem{notation}[corollary]{Notation}
\newtheorem{observation}[corollary]{Observation}
\newtheorem{open problem}[corollary]{Open problem}
\newcommand{\CR}{\operatorname{cr}}
\newcommand{\mult}{\operatorname{mult}}
\newcommand{\ft}{\operatorname{ft}}
\newcommand{\ev}{\operatorname{ev}}
\newcommand{\val}{\operatorname{val}}
\newcommand{\sgn}{\operatorname{sgn}}
\begin{document}
\pagenumbering{arabic}

\author{Christoph Goldner}
\address{ Eberhard Karls Universit\"{a}t T\"{u}bingen, Germany}
\email{\href{mailto:christoph.goldner@math.uni-tuebingen.de}{christoph.goldner@math.uni-tuebingen.de}}

\title{Generalizing tropical Kontsevich's formula to multiple cross-ratios}

\keywords{Tropcial geometry, enumerative geometry, Kontsevich's formula, tropical cross-ratios}
\subjclass[2010]{Primary: 14N10, 14T05, Secondary: 14H50}

\begin{abstract}
\textit{Kontsevich's formula} is a recursion that calculates the number of rational degree $d$ curves in $\mathbb{P}_{\mathbb{C}}^2$ passing through $3d-1$ general positioned points. Kontsevich proved it by considering curves that satisfy extra conditions besides the given point conditions. These crucial extra conditions are two line conditions and a condition called \textit{cross-ratio}.

This paper addresses the question whether there is a \textit{general Kontsevich's formula} which holds for more than one cross-ratio.
Using tropical geometry, we obtain such a recursive formula. For that we use a correspondence theorem \cite{IlyaCRC} that relates the algebro-geometric numbers in question to tropical ones.
It turns out that the general tropical Kontsevich's formula we obtain is capable of not only computing the algebro-geometric numbers we are looking for, but also of computing further tropical numbers for which there is no correspondence theorem yet.

We show that our recursive general Kontsevich's formula implies the original Kontsevich's formula and that the initial values are the numbers Kontsevich's fomula provides and purely combinatorial numbers, so-called \textit{cross-ratio multiplicities}.
\end{abstract}

\maketitle


\section*{Introduction}
Consider the following enumerative problem: Determine the number $N_d$ of rational degree $d$ curves in $\mathbb{P}_{\mathbb{C}}^2$ passing through $3d-1$ general positioned points. For small $d$, this question can be answered using methods from classical algebraic geometry. It took until '94 when Kontsevich, inspired from developments in physics, presented a recursive formula to calculate the numbers $N_d$ for all degrees.
\begin{theorem*}[Kontsevich's formula, \cite{KontsevichOriginal}]
The numbers $N_d$ are determined by the recursion

\begin{align*}
N_{d}=
\sum_{\substack{d_1+d_2=d \\ d_1,d_2>0}}
\left( d_1^2 d_2^2 \cdot {3d-4 \choose 3d_1-2}- d_1^ 3d_2 \cdot {3d-4 \choose 3d_1-1} \right) N_{d_1}N_{d_2}
\end{align*}
with initial value $N_1=1$.
\end{theorem*}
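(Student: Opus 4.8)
The plan is to derive the recursion from the geometry of a one-parameter family of rational curves, using exactly the ``cross-ratio'' and the ``two line conditions'' highlighted above. I would work on the moduli space $\overline{M}_{0,3d}(\mathbb{P}^2,d)$ of $3d$-marked rational stable maps of degree $d$, which has dimension $6d-1$. On it I impose $3d-2$ point conditions (marked points sent to general points) and $2$ line conditions (two further marked points required to lie on general lines); this cuts out a one-dimensional family $Z$. Among the marked points I single out four: two carrying point conditions, say $x_1,x_2$, and the two line-conditioned ones, say $y_1,y_2$. Forgetting all but these four and stabilising gives a map $\pi\colon Z\to\overline{M}_{0,4}\cong\mathbb{P}^1$, whose fibre coordinate is precisely the cross-ratio of the four points.

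The key input is that any two points of $\mathbb{P}^1$ are linearly equivalent, so the three boundary points of $\overline{M}_{0,4}$, which record the three combinatorial ways of pairing up $x_1,x_2,y_1,y_2$, have the same number of preimages in $Z$ counted with multiplicity. I would compare the two boundary points corresponding to the pairings $(x_1x_2\mid y_1y_2)$ and $(x_1y_1\mid x_2y_2)$. Each preimage consists of reducible stable maps whose domain has two components of degrees $d_1+d_2=d$, with the four distinguished markings distributed according to the chosen pairing and the node mapping to an intersection point of the two image curves.

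The arithmetic heart is the enumeration of these reducible configurations. For a splitting $d=d_1+d_2$ with $d_1,d_2\ge 1$, distributing the remaining $3d-4$ point conditions between the two components produces the binomial coefficients, while the remaining factors are geometric multiplicities: the node contributes a B\'ezout factor $d_1d_2$ (the two image curves meet in $d_1d_2$ points), and each line condition sitting on a degree-$d_i$ component contributes a further $d_i$ (the marked point may be placed at any of the $d_i$ intersection points of that component with the line). In the pairing $(x_1y_1\mid x_2y_2)$ the two line conditions land on different components, giving an extra $d_1\cdot d_2$ and hence the term $d_1^2d_2^2\binom{3d-4}{3d_1-2}$; in the pairing $(x_1x_2\mid y_1y_2)$ they land on the same component, giving $d_1d_2^3$, which after symmetrising the summation index yields $d_1^3d_2\binom{3d-4}{3d_1-1}$. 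Equating the two boundary contributions and isolating the leading term produces the stated recursion; the term $N_d$ itself arises from the degenerate splitting in which one component is contracted, evaluated through the divisor and fundamental-class relations. The initial value $N_1=1$ is clear, since there is a unique line through two points.

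The step I expect to be the main obstacle is the multiplicity bookkeeping. One has to verify that every reducible map in each fibre is counted with the correct multiplicity, that no excess-dimensional or non-reduced boundary components intervene, and above all that the contracted-component boundary terms are assembled correctly, so that exactly one clean copy of $N_d$ is isolated with the correct sign while the two families of degree factors $d_1^2d_2^2$ and $d_1^3d_2$ emerge with their binomials. In the tropical framework of this paper the same architecture reappears: the forgetful map to $\overline{M}_{0,4}$ is replaced by a tropical cross-ratio condition, its three degeneration types play the role of the linear equivalence of boundary points, and the recursion can then be recovered from the general tropical Kontsevich's formula via the correspondence theorem \cite{IlyaCRC}.
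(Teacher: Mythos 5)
Your proposal is correct in outline, but it is not the route the paper takes: you re-derive Kontsevich's formula entirely inside classical algebraic geometry, essentially reconstructing Kontsevich's original argument on $\overline{M}_{0,3d}(\mathbb{P}^2,d)$, whereas the paper never works with that space. In the paper the formula is recovered as Corollary \ref{cor:tropical_kontsevich_formula}, a degenerate case of the general tropical Kontsevich's formula (Theorem \ref{thm:generalized_kontsevich}): one fixes two tropical multi line conditions $L_a,L_b$ of weight one and a degenerated tropical cross-ratio $\lambda=\lbrace L_a,L_b,p_c,p_d\rbrace$, and compares the two resolutions $\lambda'=(L_ap_c|L_bp_d)$ and $\tilde{\lambda}'=(L_aL_b|p_cp_d)$ --- this comparison plays exactly the role of your linear equivalence of the boundary points of $\overline{M}_{0,4}$. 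The $1/1$ splits then produce the $d_1^2d_2^2$ and $d_1^3d_2$ terms via tropical B\'ezout, the $2/0$ split with a degree-zero component produces the isolated copy of $N_d$ (your contracted component mapping to the intersection of the two lines), and the classical statement follows by Mikhalkin's correspondence theorem. Structurally the two arguments are parallel term by term (two pairings, B\'ezout node factor $d_1d_2$, line factors $d_i$, binomial bookkeeping, contracted-component term), and your computations of all of these are correct. What your route buys is self-containedness: no tropical machinery and no correspondence theorem are needed. What it does not buy is generalizability --- the argument with two special boundary degenerations does not obviously extend to several cross-ratios, which is exactly what the paper's splitting machinery (Propositions \ref{prop:contracted_bounded_edge} and \ref{prop:multiplicities_of_1/1_and_2/0}) is built for. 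The one substantive gap on your side is the step you yourself flag: verifying that every reducible map in the two boundary fibres counts with multiplicity one and that no excess boundary components contribute; this is genuinely nontrivial (it is the technical heart of the rigorous classical proof), while the paper handles the analogous issue combinatorially by splitting $\operatorname{ev}$-matrix determinants after cutting the contracted bounded edge.
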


\noindent This recursion is known as \textit{Kontsevich's formula}. The only initial value it needs is $N_1=1$, i.e. the fact that there is exactly one line passing through two different points.

A \textit{cross-ratio} is an element of the ground field associated to four collinear points. It encodes the relative position of these four points to each other. It is invariant under projective transformations and can therefore be used as a constraint that four points on $\mathbb{P}^1$ should satisfy. So a cross-ratio can be viewed as a condition on elements of the moduli space of $n$-pointed rational stable maps to a toric variety.

A crucial idea in the proof of Kontsevich's formula is to consider curves that satisfy extra conditions besides the given point conditions. These extra conditions are two line conditions and a cross-ratio condition.
In fact, the original proof of Kontsevich's formula yields a formula to determine the number of rational plane curves satisfying an appropriate number of general positioned point conditions, two line conditions and one cross-ratio condition.
Hence the following question naturally comes up:

\fbox{\parbox{15cm}{Is there a general version of Kontsevich's formula that recursively calculates the number of rational plane degree $d$ curves that satisfy general positioned point, curve and cross-ratio conditions?}}\\

\noindent We remark that Kontsevich's formula was generalized in different ways before, e.g. Ernstr\"{o}m and Kennedy took tangency conditions into account \cite{ErnstroemKennedy98,ErnstroemKennedy99} and Di Francesco and Itzykson \cite{FrancescoItzykson} generalized it among others to $\mathbb{P}_{\mathbb{C}}^1\times \mathbb{P}_{\mathbb{C}}^1$. We are not aware of any generalization that includes multiple cross-ratios.\\

Tropical geometry proved to be an effective tool to answer enumerative questions. To successfully apply tropical geometry to an enumerative problem, a so-called \textit{correspondence theorem} is required.
The first celebrated correspondence theorem was proved by Mikhalkin \cite{MikhalkinFundamental}. It states that the numbers $N_d$ equal its tropical counterpart, i.e. they can be obtained from the weighted\footnote{Tropical curves are always counted with multiplicity.} count of rational tropical degree $d$ curves in $\mathbb{R}^2$ passing through $3d-1$ general positioned points. Hence Kontsevich's formula translates into a recursion on the tropical side called \textit{tropical Kontsevich's formula} and vice versa.
Gathmann and Markwig demonstrated the efficiency of tropical methods by giving a purely tropical proof of tropical Kontsevich's formula \cite{KontsevichPaper}. Applying Mikhalkin's correspondence theorem then yields Kontsevich's formula.

In the tropical proof --- as in the non-tropical case --- rational tropical degree $d$ curves that satisfy point conditions, two line conditions and one \textit{tropical cross-ratio} condition are considered.
Roughly speaking, a tropical cross-ratio fixes the sum of lengths of a collection of bounded edges of a rational tropical curve.

\begin{example}\label{ex:introduction_trop_CR}
Figure \ref{Figure_25} shows a plane rational tropical degree $2$ curve $C$ such that $C$ satisfies four point conditions with its contracted ends labeled by $1,2,4,5$, and such that $C$ satisfies one curve condition (which is a line that is indicated by dots) with its contracted end labeled with $3$. Moreover, $C$ satisfies the tropical cross-ratio $\lambda'=(12|34)$ which determines the bold red length.

\begin{figure}[H]
\centering
\def\svgwidth{450pt}
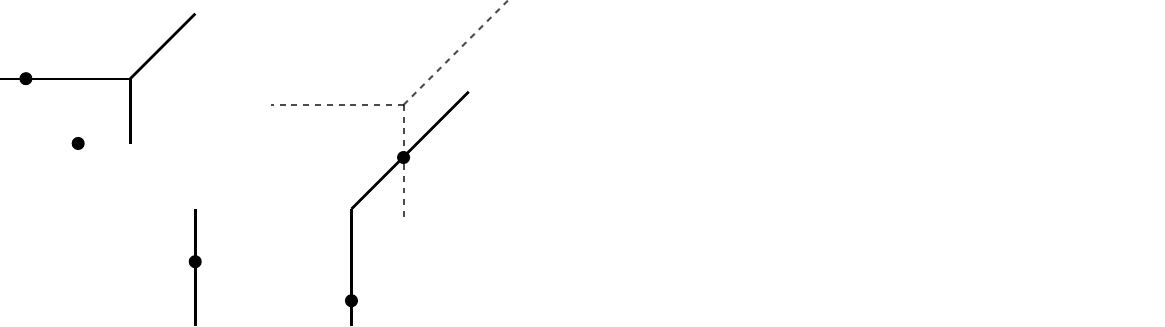
\caption{On the left there is the curve $C$ of Example \ref{ex:introduction_trop_CR} with its bounded edges that contribute to the tropical cross-ratio $\lambda'$ colored bold red (the lengths of these edges are $l_1,l_2$). On the right there is the image of $C$ under a so-called \textit{forgetful map} $\ft_{\lbrace 1,2,3,4,\rbrace}$ that records the labels and the length $l_1+l_2$ which appear in the tropical cross-ratio $\lambda'$.}
\label{Figure_25}
\end{figure}
\end{example}

Tropical cross-ratios are the tropical counterpart to non-tropical cross-ratios.
Mikhalkin \cite{MikhalkinCRC} introduced a tropical version of cross-ratios under the name ``tropical double ratio'' to embed the moduli space of $n$-marked abstract rational tropical curves $\mathcal{M}_{0,n}$ into $\mathbb{R}^N$ in order to give it the structure of a balanced fan.
Tyomkin proved a correspondence theorem \cite{IlyaCRC} that involves cross-ratios, where the length of a tropical cross-ratio is related to a given non-tropical cross-ratio via the valuation map. More precisely, Tyomkin's correspondence theorem states that the number of rational plane degree $d$ curves satisfying point and cross-ratio conditions equals its tropical counterpart. Hence a general tropical Kontsevich's formula that recursively computes the weighted number of rational plane tropical curves of degree $d$ that satisfy point and tropical cross-ratio conditions simultaneously computes the non-tropical numbers as well.\\

Our approach to a general Kontsevich's formula is inspired by the one of Gathmann and Markwig.
Let us sum up the (for our purposes) most relevant ideas and techniques used in \cite{KontsevichPaper}:
\begin{description}
\item[1 Splitting curves]\hfill \\
An important observation is that a count of tropical curves satisfying a tropical cross-ratio condition $\lambda'$ is independent of the length of the tropical cross-ratio. In particular, one can choose a large length for $\lambda'$. An even more important observation, which, at the end of the day, gives rise to a recursion is the following: If the length of $\lambda'$ is large enough, then all tropical curves satisfying $\lambda'$ have a contracted bounded edge. Hence they can be split into two curves.
\item[2 Splitting multiplicities]\hfill \\
Tropical curves are counted with multiplicities. So splitting curves using a large length for a tropical cross-ratio only yields a recursion if the multiplicities of such tropical curves split accordingly.
\item[3 Using rational equivalence]\hfill \\
A tropical cross-ratio appears as a pull-back of a point of $\mathcal{M}_{0,4}$ and pull-backs of different point of $\mathcal{M}_{0,4}$ are rationally equivalent \cite{FirstStepsIntersectionTheory}. Hence the number of tropical curves satisfying a tropical cross-ratio $\lambda'=(\beta_1\beta_2|\beta_3\beta_4)$ does not depend on how the labels $\beta_1,\dots,\beta_4$ are grouped together --- we could also consider the cross-ratio $\tilde{\lambda}'=(\beta_1\beta_3|\beta_2\beta_4)$ and obtain the same number. This yields an equation.
\end{description}

As a result, we obtain a general tropical Kontsevich's formula (Theorem \ref{thm:generalized_kontsevich}) that recursively calculates the weighted number of rational plane tropical curves of degree $d$ that satisfy point conditions, curve conditions and tropical cross-ratio conditions. In order to obtain a non-tropical general Kontsevich's formula (Corollary \ref{cor:non-tropical_general_Kontsevich_formula}), we apply Tyomkin's correspondence theorem \cite{IlyaCRC}. Notice that Tyomkin's correspondence theorem only holds for point and cross-ratio conditions. There is no correspondence theorem that relates the tropical numbers that also involve curve conditions to their non-tropical counterparts yet.

The general Kontsevich's formula we derive this way allows us to recover Kontsevich's fomula, see Corollary \ref{cor:tropical_kontsevich_formula}. The initial values of the general Kontsevich's formula are the numbers provided by the original Kontsevich's formula and so-called \textit{cross-ratio multiplicities}, which are purely combinatorial \cite{CR1}.

\subsection*{Organization of the paper}
We use the framework provided by steps 1 to 3 described above to obtain a general Kontsevich's formula. Although this general framework follows the outline of the tropical proof of Kontsevich's formula in \cite{KontsevichPaper}, new methods for steps 1 and 2 are required, which we elaborate right after the preliminary section. The preliminary section collects background on tropical moduli spaces and tropical intersection theory. For step 1, a new and general concept of moving parts of a tropical curve is established. Splitting the multiplicities in the 2nd step is done via a novel approach that considers ``artificial'' line conditions.
Putting everything together to deduce our recursion is done in the last section. To complete the paper, we conclude (tropical) Kontsevich's formula from our general version.

\subsection*{Acknowledgements}
The author would like to thank Hannah Markwig for valuable feedback and helpful discussions. The author gratefully acknowledges partial support by DFG-collaborative research center TRR 195 (INST 248/237-1).
This work was partially completed during the workshop ``Tropical Geometry: new directions'' at the Mathematisches Forschungsinstitut Oberwolfach in spring 2019. The author would like to thank the institute for hospitality and excellent working conditions.

\section{Preliminaries}
We recall some standard notations and definitions from tropical geometry \cite{MikhalkinCRC, KontsevichPaper,GathmannKerberMarkwig} and give a very brief overview of the necessary tropical intersection theory. After that, (degenerated) tropical cross-ratios are defined \cite{CR1}.

Besides this, we try to make notations used as clear as possible by introducing notations in separate blocks to which we refer later.

\begin{notation}\label{notation:underlined_symbols}
We write $[m]:=\lbrace 1,\dots,m \rbrace$ if $0\neq m\in\mathbb{N}$, and if $m=0$, then define $[m]:=\emptyset$.
Underlined symbols indicate a set of symbols, e.g. $\underline{n}\subset [m]$ is a subset $\lbrace 1,\dots, m\rbrace$. We may also use sets $S$ of symbols as an index, e.g. $p_S$, to refer to the set of all symbols $p$ with indices taken from $S$, i.e. $p_S:=\lbrace p_i\mid i\in S \rbrace$. The $\#$-symbol is used to indicate the number of elements in a set, for example $\#[m]=m$.
\end{notation}

\subsection*{Tropical moduli spaces}
This subsection collects background from \cite{MikhalkinCRC, KontsevichPaper,GathmannKerberMarkwig}.

\begin{definition}[Moduli space of abstract rational tropical curves]
We use Notation \ref{notation:underlined_symbols}. An \textit{abstract rational tropical curve} is a metric tree $\Gamma$ with unbounded edges called \textit{ends} and with $\val(v)\geq 3$ for all vertices $v\in\Gamma$. It is called $N$-\textit{marked abstract tropical curve} $(\Gamma,x_{[N]})$ if $\Gamma$ has exactly $N$ ends that are labeled with pairwise different $x_1,\dots,x_N\in\mathbb{N}$. Two $N$-marked tropical curves $(\Gamma,x_{[N]})$ and $(\tilde{\Gamma},\tilde{x}_{[N]})$ are isomorphic if there is a homeomorphism $\Gamma\to \tilde{\Gamma}$ mapping $x_i$ to $\tilde{x}_i$ for all $i$ and each edge of $\Gamma$ is mapped onto an edge of $\tilde{\Gamma}$ by an affine linear map of slope $\pm 1$. The set $\mathcal{M}_{0,N}$ of all $N$-marked tropical curves up to isomorphism is called \textit{moduli space of $N$-marked abstract tropical curves}. Forgetting all lengths of an $N$-marked tropical curve gives us its \textit{combinatorial type}.
\end{definition}

\begin{remark}[$\mathcal{M}_{0,N}$ is a tropical fan]
The moduli space $\mathcal{M}_{0,N}$ can explicitly be embedded into a $\mathbb{R}^t$ such that $\mathcal{M}_{0,N}$ is a tropical fan of pure dimension $N-3$ with its fan structure given by combinatorial types and all its weights are one, i.e. $\mathcal{M}_{0,n}$ represents an affine cycle in $\mathbb{R}^t$. This allows us to use tropical intersection theory on $\mathcal{M}_{0,n}$.
\end{remark}

\begin{figure}[H]
\centering
\def\svgwidth{150pt}
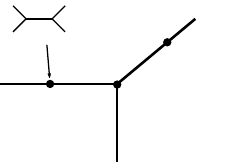
\caption{One way of embedding the moduli space $\mathcal{M}_{0,4}$ into $\mathbb{R}^2$ centered at the origin of $\mathbb{R}^2$. The length of a bounded edge of an abstract tropical curve depicted above is given by the distance of the point in $\mathcal{M}_{0,4}$ corresponding to this curve from the origin of $\mathbb{R}^2$. The ends of $\mathcal{M}_{0,4}$ correspond to different distributions of labels on ends of abstract tropical curves with four ends. All cases are $(12|34), (13|24), (14|23)$.}
\label{Example_M_0_4}
\end{figure}

\begin{definition}[Moduli space of rational tropical stable maps to $\mathbb{R}^2$]\label{definition:moduli_stable_maps}
Let $m,d\in\mathbb{N}$. A \textit{rational tropical stable map of degree $d$ to $\mathbb{R}^2$ with $m$ contracted ends} is a tuple $(\Gamma,x_{[N]},h)$ with $N\in\mathbb{N}_{>0}$, where $(\Gamma,x_{[N]})$ is an $N$-marked abstract tropical curve with $N=3d+m$, $x_{[N]}=[N]$ and a map $h:\Gamma\to\mathbb{R}^2$ that satisfies the following:
\begin{itemize}
\item[(a)]
Let $e\in\Gamma$ be an edge with length $l(e)\in [0,\infty]$, identify $e$ with $[0,l(e)]$ and denote the vertex of $e$ that is identified with $0\in [0,l(e)]=e$ by $V$. The map $h$ is integer affine linear, i.e. $h\mid_e:t\mapsto tv+a$ with $a\in\mathbb{R}^2$ and $v(e,V):=v\in\mathbb{Z}^2$, where $v(e,V)$ is called \textit{direction vector of $e$ at $V$} and the \textit{weight} of an edge (denoted by $\omega(e)$) is the $\gcd$ of the entries of $v(e,V)$. The vector $\frac{1}{\omega(e)}\cdot v(e,V)$ is called the \textit{primitive} direction vector of $e$ at $V$. If $e=x_i\in\Gamma$ is an end, then $v(x_i)$ denotes the direction vector of $x_i$ pointing away from its one vertex it is adjacent to.
\item[(b)]
The direction vector $v(x_i)$ of an end labeled with $x_i$ is given by 
\begin{align*}
\begin{array}{c||c|c|c|c}
x_i & 0,\dots,m & m+1,\dots, m+d & m+d+1,\dots, m+2d & m+2d+1,\dots, m+3d \\
\hline\hline
v(x_i) & \begin{pmatrix}
0\\
0
\end{pmatrix}
&
\begin{pmatrix}
-1\\
0
\end{pmatrix}
&
\begin{pmatrix}
0\\
-1
\end{pmatrix}
&
\begin{pmatrix}
1\\
1
\end{pmatrix}
\end{array}.
\end{align*}
Ends with direction vector zero are called \textit{contracted ends}.
\item[(c)]
The \textit{balancing condition}
\begin{align*}
\sum_{\substack{e\in\Gamma\textrm{ an edge}, \\ V \textrm{ vertex of }e}}v(e,V)=0
\end{align*}
holds for every vertex $V\in\Gamma$.
\end{itemize}
Two rational tropical stable maps of degree $d$ with $m$ contracted ends, namely $(\Gamma , x_{[N]},h)$ and $(\Gamma' ,x'_{[N]},h')$, are isomorphic if there is an isomorphism $\varphi$ of their underlying $N$-marked tropical curves such that $h'\circ\varphi=h$.
The set $\mathcal{M}_{0,m}\left(\mathbb{R}^2,d\right)$ of all (rational) tropical stable maps of degree $\Delta$ to $\mathbb{R}^2$ with $m$ contracted ends up to isomorphism is called \textit{moduli space of (rational) tropical stable maps of degree $d$ to $\mathbb{R}^2$ (with $m$ contracted ends)}.
\end{definition}

\begin{remark}[$\mathcal{M}_{0,m}\left(\mathbb{R}^2,d\right)$ is a fan]\label{remark:identification:stable_maps_abstract_maps}
The map
\begin{align*}
\mathcal{M}_{0,m}\left(\mathbb{R}^2,d\right) &\to \mathcal{M}_{0,N}\times\mathbb{R}^2\\
(\Gamma,x_{[N]},h) &\mapsto \left(\left(\Gamma,x_{[N]}\right),h(x_1)\right)
\end{align*}
with $N=3d+m$ is bijective and $\mathcal{M}_{0,m}\left(\mathbb{R}^2,d\right)$ is a tropical fan of dimension $3d+m-1$. Hence $\mathcal{M}_{0,n}\left(\mathbb{R}^2,d\right)$ represents an affine cycle in a $\mathbb{R}^t$. This allows us to use tropical intersection theory on $\mathcal{M}_{0,n}\left(\mathbb{R}^2,d\right)$.
\end{remark}

\begin{definition}[Evaluation maps]\label{def:ev_map}
For $i\in [m]$, the map
\begin{align*}
\operatorname{ev}_i:\mathcal{M}_{0,m}\left(\mathbb{R}^2,d\right) &\to \mathbb{R}^2\\
(\Gamma,x_{[N]},h) &\mapsto h(x_i)
\end{align*}
is called \textit{$i$-th evaluation map}. Under the identification from Remark \ref{remark:identification:stable_maps_abstract_maps} the $i$-th evaluation map is a morphism of fans $\operatorname{ev}_i:\mathcal{M}_{0,N}\times\mathbb{R}^2 \to \mathbb{R}^2$. This allows us to pull-back cycles via the evaluation map.
\end{definition}

\begin{definition}[Forgetful maps]\label{def:ft_map}
For $N\geq4$ the map
\begin{align*}
\operatorname{ft}_{x_{[N-1]}}:\mathcal{M}_{0,N}&\to\mathcal{M}_{0,N-1}\\
(\Gamma,x_{[N]}) &\mapsto (\Gamma',x_{[N-1]}),
\end{align*}
where $\Gamma'$ is the stabilization (straighten $2$-valent vertices) of $\Gamma$ after removing its end marked by $x_N$ is called the $N$-th \textit{forgetful map}. Applied recursively, it can be used to forget several ends with markings in $I^C\subset x_{[N]}$, denoted by $\operatorname{ft}_I$, where $I^C$ is the complement of $I\subset x_{[N]}$. With the identification from Remark \ref{remark:identification:stable_maps_abstract_maps}, and additionally forgetting the map $h$ to the plane, we can also consider 
\begin{align*}
\operatorname{ft}_I:\mathcal{M}_{0,m}\left(\mathbb{R}^2,d\right) &\to\mathcal{M}_{0,|I|}\\
(\Gamma,x_{[N]},h) &\mapsto \operatorname{ft}_I(\Gamma,x_i|i\in I).
\end{align*}
\end{definition}
Any forgetful map is a morphism of fans. This allows us to pull-back cycles via the forgetful map.

\begin{definition}[Tropical curves and multi lines]\label{def:tropical_line}
A \textit{plane tropical curve $C$ of degree $d$} is the abstract $1$-dimensional cycle a rational tropical stable map of degree $d$ gives rise to, i.e. $C$ is an embedded $1$-dimensional polyhedral complex in $\mathbb{R}^2$. A \textit{(tropical) multi line} $L$ is a tropical rational curve in $\mathbb{R}^2$ with $3$ ends such that the primitive direction of each of this ends is one of the \textit{standard directions} $(-1,0),(0,-1)$ or $(1,1)\in \mathbb{R}^2$. The weight with which an end of $L$ appears is denoted by $\omega(L)$.
\end{definition}

\subsection*{Tropical intersection products}
As indicated in the last section, tropical intersection theory can be applied to the tropical moduli spaces that are interesting for us. For a short and --- for our purposes --- sufficient introduction to tropical intersection theory have a look at the preliminary section of \cite{CR1}. For more background of tropical intersection theory see \cite{FultonSturmfels, Rau,Allermann,FirstStepsIntersectionTheory, Katz2012, IntersectionMatroidalFans,  AllermannHampeRau, JohannesIntersectionsonTropModuliSpaces}. In the present paper tropical intersection theory provides the overall framework in which we work but all we need from this machinary is the following:

\begin{remark}[Enumerative meaning of our tropical intersection products]\label{remark:enumerative_meaning_intersection_product}
Throughout this paper, we consider intersection products of the form $\varphi_1^*(Z_1)\cdots\varphi_r^*(Z_r)\cdot \mathcal{M}_{0,m}\left(\mathbb{R}^2,d\right)$, where $\varphi_i$ is either an evaluation $\ev_i$ map from Definition \ref{def:ev_map} or a forgetful map $\ft_I$ to $\mathcal{M}_{0,4}$ from Definition \ref{def:ft_map}, and $Z_i$ is a cycle we want to pull-back via $\varphi_i$ for $i\in [r]$.
Notice that $\ev_i$ is a map to $\mathbb{R}^2$ while $\ft_I$ is a map to $\mathcal{M}_{0,4}$.
Using a projection $\tilde{\pi}:\mathcal{M}_{0,4}\to\mathbb{R}$ as in Remark 2.2 of \cite{CR1} and considering $\tilde{\pi}\circ\ft_I$ instead of $\ft_I$ does not affect $\varphi_1^*(Z_1)\cdots\varphi_r^*(Z_r)\cdot \mathcal{M}_{0,m}\left(\mathbb{R}^2,d\right)$ since
\begin{align*}
\left( \tilde{\pi}\circ\ft_I\right)^*(\tilde{Z}_i)&=\ft_I^*\left( \tilde{\pi}^*(\tilde{Z}_i)\right)\\
&=\ft_I^*(Z_i)
\end{align*}
holds for a suitable cycle $\tilde{Z}_i$. Thus all our maps can be treated as maps to either $\mathbb{R}^2$ or $\mathbb{R}^1$. Hence Proposition 1.15 of \cite{JohannesIntersectionsonTropModuliSpaces} can be applied, and together with Proposition 1.12 of \cite{JohannesIntersectionsonTropModuliSpaces} and Lemma 2.11 of \cite{CR1} it follows that the support of the intersection product $\varphi_1^*(Z_1)\cdots\varphi_r^*(Z_r)\cdot \mathcal{M}_{0,m}\left(\mathbb{R}^2,d\right)$ equals $\varphi_1^{-1}(Z_1)\cap\dots \cap \varphi_r^{-1}(Z_r)$. Hence this intersection product gains an enumerative meaning if it is $0$-dimensional. More precisely, each point in such an intersection product corresponds to a tropical stable map that satisfies certain conditions that are given by the cycles $Z_i$ for $i\in [r]$.
\end{remark}

The weights of such intersection products $\varphi_1^*(Z_1)\cdots\varphi_r^*(Z_r)\cdot \mathcal{M}_{0,m}\left(\mathbb{R}^2,d\right)$ are discussed within the next section. Before proceeding with the next section, we want to briefly recall the concept of rational equivalence that is then frequently used in this paper.

\begin{remark}[Rational equivalence]\label{remark:facts_about_rational_equivalence}
When considering cycles $Z_i$ as in Remark \ref{remark:enumerative_meaning_intersection_product} that are conditions we impose on tropical stable maps, then we usually want to ensure that a $0$-dimensional cycle $\varphi_1^*(Z_1)\cdots\varphi_r^*(Z_r)\cdot \mathcal{M}_{0,m}\left(\mathbb{R}^2,d\right)$ is independent of the exact positions of the conditions $Z_i$ for $i\in [r]$. This is where \textit{rational equivalence} comes into play. We usually consider cycles like $Z_i$ up to a rational equivalence relation. 
The most important facts about this relation are the following:
\begin{itemize}
\item[(a)]
Two cycles $Z,Z'$ in $\mathbb{R}^n$ that only differ by a translation are rationally equivalent.
\item[(b)]
Pull-backs $\varphi^*(Z),\varphi^*(Z')$ of rationally equivalent cycles $Z,Z'$ are rationally equivalent.
\item[(c)]
The \textit{degree} of a $0$-dimensional intersection product which is defined as the sum of all weights of all points in this intersection product is compatible with rational equivalence, i.e. if two $0$-dimensional intersection products are rationally equivalent, then their degrees are the same.
\end{itemize}
Notice that (a)-(c) allows us to ``move" all conditions we consider slightly without affecting a count of tropical stable maps we are interested in.
\end{remark}

Another fact about rational equivalence is the following:

\begin{remark}[Recession fan]\label{remark:recession_fan}
Each tropical curve $C$ of degree $d$ in $\mathbb{R}^2$ is rationally equivalent to a multi line $L_C$ with weights $\omega(L_C)=d$. Hence pull-backs of $C$ and $L_C$ along the evaluation maps are rationally equivalent. The multi line $L_C$ is also called \textit{recession fan of $C$}.
\end{remark}

\subsection*{Tropical cross-ratios and the numbers of interest}

\begin{definition}
A \textit{(tropical) cross-ratio} $\lambda'$ is an unordered pair of pairs of unordered numbers $\left(\beta_1\beta_2|\beta_3\beta_4\right)$ together with an element in $\mathbb{R}_{>0}$ denoted by $|\lambda'|$, where $\beta_1,\dots,\beta_4$ are labels of pairwise distinct ends of a tropical stable map of $\mathcal{M}_{0,m}\left(\mathbb{R}^2,d \right)$. We say that $C\in\mathcal{M}_{0,m}\left(\mathbb{R}^2,d \right)$ satisfies the cross-ratio constraint $\lambda'$ if $C\in\ft^*_{\lambda'}\left(|\lambda'| \right)\cdot \mathcal{M}_{0,m}\left(\mathbb{R}^2,d \right)$, where $|\lambda'|$ is the canonical local coordinate of the ray $\left(\beta_1\beta_2|\beta_3\beta_4\right)$ in $\mathcal{M}_{0,4}$.
Figure \ref{Figure_25} of Example \ref{ex:introduction_trop_CR} in the introduction provides an example of a tropical stable map satisfying a non-degenerated cross-ratio $\lambda'$ with length $|\lambda'|=l_1+l_2$.

A \textit{degenerated (tropical) cross-ratio} $\lambda$ is defined as a set $\lbrace \beta_1,\dots,\beta_4\rbrace$, where $\beta_1,\dots,\beta_4$ are pairwise distinct labels of ends of a tropical stable map $\mathcal{M}_{0,m}\left(\mathbb{R}^2,d \right)$. We say that $C\in\mathcal{M}_{0,m}\left(\mathbb{R}^2,d \right)$ satisfies the degenerated cross-ratio constraint $\lambda$ if $C\in\ft^*_\lambda\left(0 \right)\cdot \mathcal{M}_{0,m}\left(\mathbb{R}^2,d \right)$. A degenerated cross-ratio arises from a non-degenerated cross-ratio by taking $|\lambda'|\to 0$ (see \cite{CR1} for more details). We refer to $\lambda$ as \textit{degeneration} of $\lambda'$ in this case.

Throughout the paper, we stick to the convention to denote a non-degenerated cross-ratio by $\lambda'$ and a degenerated one by $\lambda$.
\end{definition}

\begin{definition}\label{def:cycle_Z_d}
Let $m\in\mathbb{N}_{>0}$. Let $\lbrace\underline{n},\underline{\kappa},\underline{f}\rbrace$ be a partition of the set $[m]$, i.e. $\underline{n},\underline{\kappa},\underline{f}\subset [m]$ and $\underline{n}\cupdot \underline{\kappa}\cupdot \underline{f}=[m]$.
Consider a degree $d\in\mathbb{N}$, $\tilde{l}\in\mathbb{N}$ degenerated cross-ratios $\lambda_{[\tilde{l}]}$, $l'\in\mathbb{N}$ non-degenerated cross-ratios $\mu'_{[l']}$, points $p_{\underline{n}}\in\mathbb{R}^2$ and tropical multi lines $L_{\underline{\kappa}}$. Define the cycle
\begin{align*}
Z_{d}\left(p_{\underline{n}},L_{\underline{\kappa}}, \lambda_{[\tilde{l}]},\mu'_{[l']} \right)
:=
\prod_{k\in\underline{\kappa}}\ev_k^*(L_k)\cdot \prod_{i\in \underline{n}} \ev_i^*\left( p_i\right)\cdot\prod_{j'=1}^{l'} \ft_{\mu_{j'}}^*\left( |\mu'_{j'}|\right)\cdot\prod_{\tilde{j}=1}^{\tilde{l}} \ft_{\lambda_{\tilde{j}}}^*\left( 0\right) \cdot \mathcal{M}_{0,m}\left(\mathbb{R}^2,d\right).
\end{align*}
Each point $p_{i}\in p_{\underline{n}}$ is a $2$-dimensional condition. Each multi line $L_k\in L_{\underline{\kappa}}$ and each cross-ratio $\mu'_{j'}\in \mu'_{[l']}$, $\lambda_{\tilde{j}}\in \lambda_{[\tilde{l}]}$ is a $1$-dimensional condition. Hence the dimension of $Z_{d}\left(p_{\underline{n}},L_{\underline{\kappa}}, \lambda_{[\tilde{l}]},\mu'_{[l']} \right)$ is
$(3d-1+m)-(2\cdot \#\underline{n}+\tilde{l}+l'+\#\underline{\kappa})$, where $3d-1+m$ is the dimension of $\mathcal{M}_{0,m}\left(\mathbb{R}^2,d \right)$.
\end{definition}

Notice that each tropical stable map in $Z_{d}\left(p_{\underline{n}},L_{\underline{\kappa}}, \lambda_{[\tilde{l}]},\mu'_{[l']} \right)$ has $3$ different kinds of contracted ends, namely contracted ends with labels in $\underline{n}$ that satisfy point conditions, contracted ends with labels in $\underline{\kappa}$ that satisfy line conditions and contracted ends with labels in $\underline{f}$ that satisfy no point or line conditions. Given $\underline{n}$ and $\underline{\kappa}$, we can calculate $\#\underline{f}$ using
\begin{align*}
m=\#\underline{n}+\#\underline{\kappa}+\#\underline{f}.
\end{align*}

\begin{definition}[General position]\label{def:general_pos}
Let $p_{\underline{n}},L_{\underline{\kappa}}, \lambda_{[\tilde{l}]},\mu'_{[l']}$ be conditions as in Definition \ref{def:cycle_Z_d} such that
\begin{align}\label{eq:gen_pos_condition_count}
3d-1=\#\underline{n}+\tilde{l}+l'-\#\underline{f}
\end{align}
holds.
These conditions are in \textit{general position} if $Z_{d}\left(p_{\underline{n}},L_{\underline{\kappa}}, \lambda_{[\tilde{l}]},\mu'_{[l']} \right)$ is a zero-dimensional nonzero cycle that lies inside top-dimensional cells of $\prod_{\tilde{j}=1}^{\tilde{l}} \ft_{\lambda_{\tilde{j}}}^*\left( 0\right)\cdot\mathcal{M}_{0,m}\left(\mathbb{R}^2,d\right)$.
\end{definition}

\begin{definition}\label{def:numbers_of_interest}
For general positioned condition as in Definition \ref{def:general_pos}, where we additionally require from the cross-ratios that no label of a non-contracted end appears in any of the cross-ratios $\lambda_{[\tilde{l}]},\mu'_{[l']}$, we define
\begin{align*}
N_{d}\left(p_{\underline{n}},L_{\underline{\kappa}}, \lambda_{[\tilde{l}]},\mu'_{[l']} \right)
:=
\deg\left( Z_{d}\left(p_{\underline{n}},L_{\underline{\kappa}}, \lambda_{[\tilde{l}]},\mu'_{[l']} \right) \right),
\end{align*}
where $\deg$ is the degree function that sums up all multiplicites of the points in the intersection product. In other words, $N_{d}\left(p_{\underline{n}},L_{\underline{\kappa}}, \lambda_{[\tilde{l}]},\mu'_{[l']} \right)$ is the number of rational tropical stable maps to $\mathbb{R}^2$ (counted with multiplicity) of degree $d$ satisfying the cross-ratios $\lambda_{[\tilde{l}]},\mu'_{[l']}$, the multi line conditions $L_{\underline{\kappa}}$ and point conditions $p_{\underline{n}}$.
\end{definition}

\begin{remark}
Allowing only tropical multi lines in Definition \ref{def:numbers_of_interest} instead of arbitrary rational tropical curves is not a restriction, since we can always pass to the recession fan of a tropical curve without effecting the count, see Remark \ref{remark:recession_fan} and \cite{Allermann}.
\end{remark}

\begin{remark}\label{remark:numbers_independent_of_positions_and_lengths}
The numbers $N_{d}\left(p_{\underline{n}},L_{\underline{\kappa}}, \lambda_{[\tilde{l}]},\mu'_{[l']} \right)$ are independent of the exact position of points $p_{\underline{n}}$ and multi lines $L_{\underline{\kappa}}$ as long as the set of all conditions is in general position. Moreover, the numbers are also independent of the exact lengths $|\mu'_{1}|,\dots,|\mu'_{l'}|$ of the non-degenerated cross-ratios. In particular, 
\begin{align*}
N_{d}\left(p_{\underline{n}},L_{\underline{\kappa}}, \lambda_{[\tilde{l}]},\mu'_{[l']} \right)
=
N_{d}\left(p_{\underline{n}},L_{\underline{\kappa}}, \lambda_{[\tilde{l}]},\mu_{[l']} \right),
\end{align*}
where $\mu_{j'}$ is the degeneration of $\mu'_{j'}$.
\end{remark}

Given a tropical stable map $C$ that satisfies a cross-ratio condition $\lambda'$, we can think of this condition as a path of fixed length $|\lambda'|$ inside $C$. Thus a degenerated cross-ratio condition $\lambda$ can be thought of as a path of length zero inside a tropical stable map, i.e. there is a vertex of valence $>3$ in $C$ satisfying a degenerated cross-ratio. Or in other words, there is a vertex $v\in C$ such that the image of $v$ under $\ft_{\lambda}$ is $4$-valent. We say that $\lambda$ is \textit{satisfied at $v$}. Obviously, \textit{a tropical stable map $C$ satisfies a degenerated cross-ratio condition if and only if there is a vertex of $C$ that satisfies the degenerated cross-ratio}. We define the set $\lambda_v$ of cross-ratios associated to a vertex $v$ that consists of all given cross-ratios whose images of $v$ using the forgetful map are $4$-valent.

\begin{remark}\label{remark:path_criterion}
An equivalent and more descriptive way of saying that a cross-ratio is satisfied at a vertex is the \textit{path criterion}:
Let $C$ be a tropical stable map and let $\lambda=\lbrace \beta_1,\dots,\beta_4\rbrace$ be a cross-ratio, then a pair $\left( \beta_i,\beta_j\right)$ induces a unique path in $C$. If the paths associated to $\left( \beta_{i_1},\beta_{i_2} \right)$ and $\left( \beta_{i_3},\beta_{i_4} \right)$ intersect in exactly one vertex $v$ of $C$ for all pairwise different choices of $i_1,\dots,i_4$ such that $\lbrace i_1,\dots,i_4 \rbrace =\lbrace 1,\dots,4 \rbrace$, then and only then the cross-ratio $\lambda$ is satisfied at $v$. Note that ``for all choices" above is equivalent to ``for one choice".
\end{remark}

\begin{construction}\label{constr:resolution_of_vertex}
Let $v$ be a vertex of an abstract tropical curve and $\lambda_j\in\lambda_v$. 
We say that $v$ is \textit{resolved according to $\lambda'_j$} (where $\lambda'_j$ is a cross-ratio that degenerates to $\lambda_j$) if
the equality
\begin{align*}
\val(v)=3+\#\lambda_v 
\end{align*}
holds, $v$ is replaced by two vertices $v_1,v_2$ that are connected by a new edge such that $\lambda'_j$ is satsfied,
\begin{align*}
\lambda_v=\lbrace\lambda_j\rbrace\cupdot\lambda_{v_1}\cupdot\lambda_{v_2}
\end{align*}
is a union of pairwise disjoint sets and
\begin{align*}
\val(v_k)=3+\#\lambda_{v_k}
\end{align*}
holds for $k=1,2$.
\end{construction}

\begin{figure}[H]
\centering
\def\svgwidth{280pt}
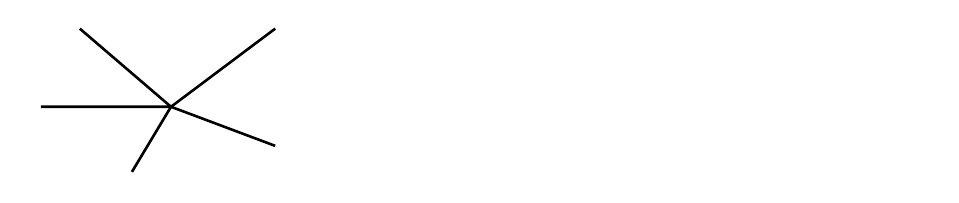
\caption{Let $\lambda_{1}:=\lbrace 1,2,3,4\rbrace$ and $\lambda_{2}:=\lbrace 1,2,3,5\rbrace$ be two degenerated cross-ratios. On the right there is a $5$-valent vertex $v$ with $\lambda_{v}=\lbrace \lambda_1,\lambda_2\rbrace$. On the left $v$ is resolved according to $\lambda'_1:=(12|34)$. Notice that the resolution is unique in this case.}
\label{Figure26}
\end{figure}

\begin{definition}[Cross-ratio multiplicity]\label{def:CR_mult}
Let $v$ be a $(3+\#\lambda_v)$-valent vertex of an abstract tropical curve with $\lambda_v=\lbrace \lambda_{j_1},\dots,\lambda_{j_r}\rbrace$ and let $\lambda'_{j_t}$ be cross-ratios that degenerate to $\lambda_{j_t}$ for $t=1,\dots,r$ such that $|\lambda'_{j_1}|>\dots >|\lambda'_{j_r}|$.
A \textit{total resolution} of $v$ is a $3$-valent labeled abstract tropical curve on $r$ vertices that arises from $v$ by resolving $v$ according to the following recursion. First, resolve $v$ according to $\lambda'_{j_1}$. The two new vertices are denoted by $v_1,v_2$. Choose $v_k$ with $\lambda_{j_2}\in\lambda_{v_k}$ and resolve it according to $\lambda_{j_2}'$ (this may not be unique, pick one resolution). Now we have $3$ vertices $v_1,v_2,v_3$ from which we pick the one with $\lambda_{j_3}\in\lambda_{v_k}$, resolve it and so on. We define the \textit{cross-ratio multiplicity} $\mult_{\CR}(v)$ of $v$ to be the number of total resolution of $v$. This number does not depend on the choice of non-degenerated cross-ratios $\lambda'_{j_1},\dots,\lambda'_{j_r}$, in particular, it does not depend on the order $|\lambda'_{j_1}|>\dots >|\lambda'_{j_r}|$, see \cite{CR1}. In the special case of $\#\lambda_v=0$, we set $\mult_{\CR}(v)=1$.
\end{definition}

\begin{example}
Let $v$ be a $6$-valent vertex such that $\lambda_v=\lbrace \lambda_1,\lambda_2,\lambda_3\rbrace$ and the degenerated cross-ratios are given by $\lambda'_1:=(12|56),\lambda'_2:=(34|56),\lambda'_3=(12|34)$. The following two $3$-valent trees schematically show all total resolutions of $v$ with respect to $|\lambda'_1|>|\lambda'_2|>|\lambda'_3|$.
\begin{figure}[H]
\centering
\def\svgwidth{300pt}
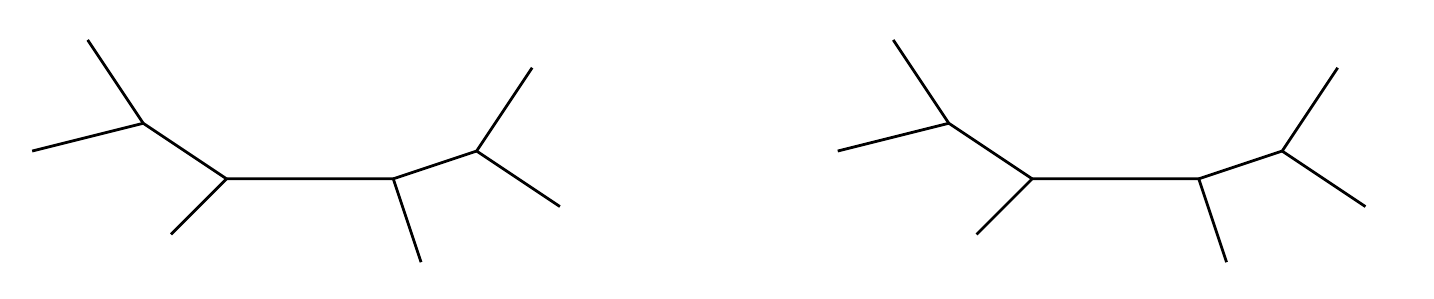
\label{Figure10}
\end{figure}
\end{example}

\begin{open problem}
The numbers $\mult_{\CR}(v)$ are not well understood. Of course, one can calculate them by considering all trees  with an appropriate number of labeled ends and pick the ones that are total resolutions of $v$ with respect to the given cross-ratios. This approach is neither fast nor pleasing. So a question naturally comes up: is there another, more efficient way to calculate the cross-ratio multiplicity $\mult_{\CR}(v)$ of a vertex $v$ satisfying degenerated cross-ratios?
\end{open problem}

\begin{definition}[Evaluation multiplicity]\label{def:ev_mult}
Let $C$ be a tropical stable map that contributes to $N_{d}\left(p_{\underline{n}},L_{\underline{\kappa}}, \lambda_{[l]} \right)$. Consider the \textit{ev-matrix} $M(C)$ of $C$, which is given by the locally (around $C$) linear map $\bigtimes_{t\in \underline{n}\cup \underline{\kappa}} \ev_t:\mathcal{M}_{0,m}\left(\mathbb{R}^2,d \right)\to\mathbb{R}^{2\cdot \#\underline{n}+\#\underline{\kappa}}$, where the coordinates on $\mathcal{M}_{0,m}\left(\mathbb{R}^2,d \right)$ are the bounded edges' lengths.
The \textit{evaluation multiplicity} $\mult_{\ev}(C)$ of $C$ is defined by
\begin{align*}
\mult_{\ev}(C):= |\det(M(C))|.
\end{align*}
The matrix in Example \ref{ex:1/1_split} provides an example of an $\ev$-matrix
\end{definition}

\begin{proposition}[\cite{CR1}]\label{prop:cr_mult}
If $C$ is a tropical stable map that contributes to $N_{d}\left(p_{\underline{n}},L_{\underline{\kappa}}, \lambda_{[l]} \right)$, then the multiplicity $\mult(C)$ with which $C$ contributes to this intersection product is given by
\begin{align*}
\mult(C)=\mult_{\ev}(C)\prod_{v\mid v \textrm{ vertex of }C}\mult_{\CR}(v),
\end{align*}
where $\mult_{\ev}(C)$ is the absolute value of the determinant of the $\ev$-matrix associated to $C$, see \cite{Rau, CR1}.
\end{proposition}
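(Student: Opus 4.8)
The plan is to compute the weight with which the isolated point $C$ occurs in the zero-dimensional cycle $Z_{d}(p_{\underline{n}},L_{\underline{\kappa}},\lambda_{[l]})$ and to show that this weight decouples into an evaluation part and a cross-ratio part. Throughout I work with the combined morphism of fans
\[
\Phi=\prod_{t\in\underline{n}\cup\underline{\kappa}}\ev_t\times\prod_{j=1}^{l}(\tilde{\pi}\circ\ft_{\lambda_j})\colon \mathcal{M}_{0,m}(\mathbb{R}^2,d)\to\mathbb{R}^{2\#\underline{n}+\#\underline{\kappa}+l},
\]
where $\tilde{\pi}$ is the projection from Remark \ref{remark:enumerative_meaning_intersection_product}. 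Its target has dimension $2\#\underline{n}+\#\underline{\kappa}+l=3d+m-1=\dim\mathcal{M}_{0,m}(\mathbb{R}^2,d)$, so that $Z_{d}(p_{\underline{n}},L_{\underline{\kappa}},\lambda_{[l]})$ is the pull-back of a point under $\Phi$ and $\mult(C)$ is the local contribution of $C$ to its degree.

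First I would replace the degenerated cross-ratios by non-degenerated perturbations. By Remark \ref{remark:numbers_independent_of_positions_and_lengths} the degree is unchanged if each $\lambda_j$ is replaced by a non-degenerated $\mu'_j$ with a generic small length, and by Remark \ref{remark:facts_about_rational_equivalence}(c) the contribution of $C$ then equals the sum of the contributions of the tropical stable maps $\tilde{C}$ into which $C$ deforms. A non-degenerated cross-ratio forces a path of positive length, whereas a degenerated one only forces a higher-valent vertex; hence, choosing the lengths so that $|\mu'_{j_1}|>\dots>|\mu'_{j_r}|$, each $\tilde{C}$ arises from $C$ by resolving every vertex $v$ with $\#\lambda_v\ge 1$ into a $3$-valent tree, and the $\tilde{C}$ are precisely the total resolutions of Definition \ref{def:CR_mult}. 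Since the resolving edges are short and resolutions at different vertices are independent, the number of such $\tilde{C}$ is $\prod_{v}\mult_{\CR}(v)$.

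Second I would compute the weight of a single resolution $\tilde{C}$, which lies in the interior of a maximal cone $\sigma$; as all cones of $\mathcal{M}_{0,m}(\mathbb{R}^2,d)$ carry weight one, this weight is $|\det(\Phi|_\sigma)|$. Using the coordinates on $\sigma$ given by a root-vertex position together with the bounded-edge lengths (Remark \ref{remark:identification:stable_maps_abstract_maps}), I would order the coordinates so that the short edges created by the resolutions come last. The rows coming from $\tilde{\pi}\circ\ft_{\mu_j}$ read off lengths along the path selected by $\mu_j$; with this ordering these rows depend triangularly on the resolving edges, so the cross-ratio block is triangular with $\pm1$ on the diagonal and determinant $\pm1$, while the complementary block is exactly the $\ev$-matrix $M(\tilde{C})$ of Definition \ref{def:ev_mult}. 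Thus $|\det(\Phi|_\sigma)|=\mult_{\ev}(\tilde{C})$. Since resolving a vertex only inserts short edges local to $v$ and does not alter the paths from the root to the marked ends carrying point or line conditions, the map recording the positions $\ev_t$, and hence $\mult_{\ev}(\tilde{C})$, is independent of the resolution and equals $\mult_{\ev}(C)$. As tropical weights are nonnegative, summing the equal contributions $\mult_{\ev}(C)\cdot 1$ over all $\prod_v\mult_{\CR}(v)$ resolutions yields $\mult(C)=\mult_{\ev}(C)\prod_v\mult_{\CR}(v)$.

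I expect the main obstacle to be the second step: establishing the block-triangular form, and in particular that the cross-ratio block has determinant $\pm1$. This hinges on a careful choice of which bounded edges serve as the coordinates realizing the cross-ratios and on verifying that the local coordinate $\tilde{\pi}$ on $\mathcal{M}_{0,4}$ pulls back, along $\ft_{\mu_j}$, to the length of a single resolving edge with unit coefficient once the ordering $|\mu'_{j_1}|>\dots>|\mu'_{j_r}|$ is fixed. Making this precise at a vertex where several cross-ratios are satisfied simultaneously --- so that the local resolution combinatorics matches the recursive definition of $\mult_{\CR}(v)$ exactly --- is the delicate point.
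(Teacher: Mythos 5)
A preliminary remark: the paper does not prove this proposition at all --- it is imported verbatim from \cite{CR1} --- so your attempt can only be measured against the strategy of that reference. Your overall plan (perturb the degenerated cross-ratios to non-degenerated ones of small generic length, identify the stable maps $\tilde{C}$ near $C$ with resolutions of the higher-valent vertices of $C$, and factor the resulting determinant into an $\ev$-block times a cross-ratio block) is the natural one and is in the spirit of \cite{CR1}. However, there is a genuine gap in how you choose the lengths, and it breaks both of your key intermediate claims.

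You claim that for generic lengths with $|\mu'_{j_1}|>\dots>|\mu'_{j_r}|$ the deformations $\tilde{C}$ of $C$ are \emph{precisely} the total resolutions, and that the cross-ratio block is triangular with $\pm 1$ on the diagonal. Both statements are false for merely generic, decreasing lengths. Take the paper's own example: a $6$-valent vertex $v$ with $\lambda'_1=(12|56)$, $\lambda'_2=(34|56)$, $\lambda'_3=(12|34)$, for which $\mult_{\CR}(v)=2$, realized by the two caterpillar-shaped total resolutions. Choose $|\lambda'_1|=10>|\lambda'_2|=9>|\lambda'_3|=8$. Every caterpillar compatible with the three quartet splits needs a middle edge of length $|\lambda'_1|-|\lambda'_2|-|\lambda'_3|$, $|\lambda'_2|-|\lambda'_1|-|\lambda'_3|$ or $|\lambda'_3|-|\lambda'_1|-|\lambda'_2|$, all negative here, so \emph{no} total resolution occurs among the deformations; instead the unique deformation of $v$ is the ``star'' resolution with three cherries $\lbrace 1,2\rbrace,\lbrace 3,4\rbrace,\lbrace 5,6\rbrace$ around a central vertex, with edge lengths $\tfrac{1}{2}(10+8-9)$, $\tfrac{1}{2}(9+8-10)$, $\tfrac{1}{2}(10+9-8)$, all positive. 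The star is not a total resolution in the sense of Definition \ref{def:CR_mult}, and its cross-ratio block is the path--edge incidence matrix $\left(\begin{smallmatrix}1&0&1\\0&1&1\\1&1&0\end{smallmatrix}\right)$ of determinant $\pm 2$: it cannot be made triangular with unit diagonal. (The total local contribution is still $2=\mult_{\CR}(v)$, so the proposition survives, but your bijection with total resolutions and your unimodularity claim both fail, and your proof would not detect this.) The repair is that the lengths must be chosen \emph{hierarchically}, e.g.\ each $|\mu'_{j_t}|$ larger than the sum of all smaller ones: in that regime the star and all other non-recursive resolutions are excluded, the deformations of $C$ are exactly the total resolutions --- this hierarchy is precisely what the recursive, order-dependent Definition \ref{def:CR_mult} encodes --- and the cross-ratio block becomes unimodular after ordering rows and columns along the recursion. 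So the difficulty you flagged at the end is not merely delicate; under your stated hypotheses the claim is wrong, and the hypothesis itself must be strengthened. A second, smaller imprecision: the complementary block is not ``the $\ev$-matrix $M(\tilde{C})$'' (which is not even square, having $l$ more columns than rows since $\tilde{C}$ has $l$ additional bounded edges), but the $\ev$-matrix $M(C)$ of $C$ itself; this is what makes the factor $\mult_{\ev}(C)$ independent of the chosen resolution.
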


\begin{corollary}[\cite{CR1}]\label{cor:CR_pfade_ueber_alle_edges_an_vertex}
Let $C$ be a tropical stable map that contributes to $N_{d}\left(p_{\underline{n}},L_{\underline{\kappa}}, \lambda_{[l]} \right)$. Let $v\in C$ be a vertex of $C$ such that $\val(v)>3$. Then for every edge $e$ adjacent to $v$ in $C$ there is an entry $\beta_i$ in some $\lambda_j\in\lambda_v$ such that $e$ is in the shortest path from $v$ to the end labeled with $\beta_i$. 
\end{corollary}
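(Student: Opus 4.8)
The plan is to argue by contradiction, using the general position hypothesis. Since $C$ contributes to $N_{d}(p_{\underline n},L_{\underline\kappa},\lambda_{[l]})$, Definition~\ref{def:general_pos} tells us that $C$ lies in a top-dimensional cell of the cycle
$W:=\prod_{\tilde j=1}^{l}\ft_{\lambda_{\tilde j}}^*(0)\cdot\mathcal{M}_{0,m}(\mathbb{R}^2,d)$, so its combinatorial type $\tau$ gives a cell $\sigma_\tau$ of $W$ with $\dim\sigma_\tau=\dim W$. By Remark~\ref{remark:enumerative_meaning_intersection_product} the support of $W$ is exactly the locus of tropical stable maps at which every degenerated cross-ratio $\lambda_{\tilde j}$ is satisfied; this is a polyhedral complex, indexed by combinatorial types, all of whose cells have dimension at most $\dim W$. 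Suppose, for contradiction, that some edge $e_0$ adjacent to $v$ carries no cross-ratio entry, i.e. no $\beta_i$ occurring in a $\lambda_j\in\lambda_v$ has its shortest $v$-to-$\beta_i$ path running through $e_0$. I will produce a combinatorial type $\tau'$ with one additional bounded edge $\tilde e$ such that $\sigma_{\tau'}$ still lies in the support of $W$ and has $\sigma_\tau$ as a proper face; this forces $\dim\sigma_\tau=\dim\sigma_{\tau'}-1\le\dim W-1<\dim W$, contradicting top-dimensionality.

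The construction of $\tau'$ is a controlled resolution of $v$, in the spirit of Construction~\ref{constr:resolution_of_vertex}, and its input is the path criterion of Remark~\ref{remark:path_criterion}: $\lambda_j\in\lambda_v$ if and only if the four labels of $\lambda_j$ reach $v$ through four pairwise distinct edges. If $\lambda_v=\emptyset$, I resolve $v$ in the ordinary way into two vertices of strictly smaller valence joined by $\tilde e$; since no degenerated cross-ratio is satisfied at $v$, all of them remain satisfied at their unchanged vertices. If instead $\lambda_v\ne\emptyset$, then $v$ cannot be $4$-valent: a cross-ratio in $\lambda_v$ would occupy all four edges at $v$, in particular $e_0$, contrary to $e_0$ being uncovered. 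Hence $\val(v)\ge 5$, and I split off $e_0$ together with one further, arbitrarily chosen edge $e_1$ onto a new trivalent vertex $w$ joined to the remaining vertex $v'$ by $\tilde e$, whose direction is forced by the balancing condition.

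The key point is that every degenerated cross-ratio still holds in $\tau'$. A cross-ratio not satisfied at $v$ is untouched. For $\lambda_j\in\lambda_v$ in the case $\val(v)\ge 5$, its four labels sit behind four distinct edges of $v$, none of them $e_0$; after the split the label possibly lying behind $e_1$ now lies behind $\tilde e$, while the others are unchanged, so the four labels still reach $v'$ through four distinct edges and $\lambda_j$ remains satisfied at $v'$ by the path criterion. Because $e_0$ carries no label of any cross-ratio, no two labels of a single cross-ratio are merged by the move. Thus $\tau'$ is a genuine combinatorial type at which all degenerated cross-ratios are satisfied throughout its cell, so $\sigma_{\tau'}$ lies in the support of $W$; contracting $\tilde e$ recovers $\tau$, exhibiting $\sigma_\tau$ as a proper face of $\sigma_{\tau'}$. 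This yields the desired contradiction and proves that every edge at $v$ carries a cross-ratio entry.

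I expect the main obstacle to be the verification in the last paragraph that the resolution preserves every cross-ratio condition, keeping the deformed curve in the support of $W$. This is precisely where the hypothesis that $e_0$ is uncovered is essential: moving an edge that were covered could collapse the four-distinct-edges property of some $\lambda_j\in\lambda_v$, dropping the curve out of $\mathrm{supp}(W)$, so the argument genuinely distinguishes covered from uncovered edges rather than merely counting dimensions.
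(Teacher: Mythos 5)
Your proof is correct. Note that the paper itself gives no argument for this corollary: it is imported verbatim from \cite{CR1}, so there is no in-paper proof to compare against. Your resolution-plus-dimension-count argument is exactly the kind of reasoning the paper's framework supports: general position (Definition \ref{def:general_pos}) places $C$ in a top-dimensional cell of $W=\prod_{j}\ft_{\lambda_j}^*(0)\cdot\mathcal{M}_{0,m}(\mathbb{R}^2,d)$, Remark \ref{remark:enumerative_meaning_intersection_product} identifies $\operatorname{supp}(W)$ with $\bigcap_j\ft_{\lambda_j}^{-1}(0)$, and since membership in $\ft_{\lambda_j}^{-1}(0)$ is a condition on the combinatorial type alone, $\operatorname{supp}(W)$ is a union of cells of the type decomposition, so your cell $\sigma_{\tau'}$ genuinely contradicts top-dimensionality of $\sigma_\tau$. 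You also isolate the two points where the argument could fail and resolve both: a $4$-valent $v$ with $\lambda_v\neq\emptyset$ cannot have an uncovered edge (so the split into $w$ and $v'$ is always available), and the only way a resolution could destroy a cross-ratio in $\lambda_v$ is if two of its labels ended up behind the new edge $\tilde e$, which is excluded precisely because $e_0$ carries no label — this is where the hypothesis enters, as you say. The one step stated without proof, that cross-ratios satisfied at vertices $u\neq v$ are unaffected, is immediate since resolving $v$ does not change the edges at $u$ nor the assignment of ends to the branches at $u$.
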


The following correspondence theorem allows us to obtain non-tropical results from our tropical ones in case of no multi line conditions.

\begin{theorem}[Correspondence Theorem 5.1 of \cite{IlyaCRC}]\label{thm:correspondence_thm_CRC}
Let $N^{\operatorname{class}}_{d}\left(p_{\underline{n}}, \mu_{[l]} \right)$ denote the number of plane rational degree $d$ curves that satisfy point conditions and non-tropical cross-ratios $\mu_1,\dots,\mu_l$ such that all conditions are in general position. Then
\begin{align*}
N^{\operatorname{class}}_{d}\left(p_{\underline{n}}, \mu_{[l]} \right)
=
N_{d}\left(p_{\underline{n}}, \lambda'_{[l]} \right)
\end{align*}
holds, where $\lambda'_j$ is the tropical cross-ratio associated to $\mu_j$ for $j\in [l]$ in the sense of \cite{IlyaCRC}.
\end{theorem}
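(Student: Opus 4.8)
The statement is Tyomkin's correspondence theorem, so strictly speaking the proof is the one given in \cite{IlyaCRC}; what follows is the strategy I would pursue to establish such a correspondence, which fits the general template of degeneration-and-tropicalization pioneered by Mikhalkin \cite{MikhalkinFundamental} but refined to keep track of the cross-ratio data. The plan is to work over an algebraically closed non-archimedean field $K$ with a nontrivial valuation $\val$ (for instance the Puiseux series over $\mathbb{C}$), so that coordinatewise valuation maps $(K^*)^2$ onto $\mathbb{R}^2$ and $\val$ maps $K^*$ onto $\mathbb{R}$. One first lifts the tropical data: choose classical point conditions $p^{\operatorname{class}}_i\in(K^*)^2$ with $\val(p^{\operatorname{class}}_i)=p_i$ and classical cross-ratio values $\mu_j\in K^*$ with $\val(\mu_j)=|\lambda'_j|$. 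Making this last compatibility precise is exactly the meaning of ``the tropical cross-ratio $\lambda'_j$ associated to $\mu_j$'': the classical cross-ratio of four marked points on $\mathbb{P}^1$ specializes, under the valuation, to the tropical double ratio, i.e. to the local coordinate $|\lambda'_j|$ on the corresponding ray of $\mathcal{M}_{0,4}$.

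The core of the argument is the tropicalization map from the (finite) set of classical solutions to the (finite) set of tropical solutions counted by $N_{d}(p_{\underline{n}},\lambda'_{[l]})$. I would first show that every rational degree $d$ curve over $K$ satisfying the classical point and cross-ratio conditions tropicalizes to a rational tropical stable map satisfying the corresponding tropical conditions; the point conditions pass to the tropical point conditions by compatibility of evaluation with $\val$, and the cross-ratio conditions pass to the tropical cross-ratios by the specialization of the double ratio recalled above. The harder half is to prove that this map is surjective onto the tropical solution set and to control its fibers: for a fixed tropical stable map $C$ contributing to $N_{d}$, one must determine the number of classical curves lying over $C$.

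The crux is therefore a local lifting computation showing that the number of classical curves over a fixed $C$ equals $\mult(C)$ as described in Proposition \ref{prop:cr_mult}, namely $\mult_{\ev}(C)\cdot\prod_{v}\mult_{\CR}(v)$. Near $C$ one parametrizes deformations by the lengths of the bounded edges, and a deformation-theoretic (obstruction-vanishing) argument identifies the number of liftings compatible with the point and line data with the absolute value of the determinant of the linearized evaluation map, producing the factor $\mult_{\ev}(C)$. The genuinely new contribution, absent from the point-only correspondence, comes from the higher-valent vertices of $C$: a vertex $v$ that simultaneously satisfies several degenerate cross-ratios $\lambda_v$ can be smoothed in several combinatorially distinct ways, and each way of resolving $v$ in the sense of Construction \ref{constr:resolution_of_vertex} corresponds to a distinct family of nearby classical curves. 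Counting these resolutions reproduces precisely the combinatorial factor $\mult_{\CR}(v)$ of Definition \ref{def:CR_mult}.

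The main obstacle I anticipate is exactly this local analysis at the multi-cross-ratio vertices: one has to verify that the cross-ratio constraints cut out the expected codimension (so that the relevant intersection is transverse and $0$-dimensional), that the lifting problem is unobstructed, and that the fiber count factors cleanly as $\mult_{\ev}(C)\cdot\prod_v\mult_{\CR}(v)$ rather than mixing the evaluation and cross-ratio contributions. Since all of this is carried out in \cite{IlyaCRC}, in the present paper I would simply invoke Theorem 5.1 there.
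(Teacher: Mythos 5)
Your proposal is correct and takes essentially the same approach as the paper: the paper does not prove this statement at all, but imports it verbatim as Theorem 5.1 of \cite{IlyaCRC}, which is exactly what you conclude by doing. Your sketch of the degeneration-and-lifting strategy (tropicalizing over a non-archimedean field, matching fiber counts to $\mult_{\ev}(C)\cdot\prod_v \mult_{\CR}(v)$) is a plausible outline of Tyomkin's argument, but since the paper contains no proof of its own, invoking the citation is all that is required here.
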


\section{Splitting curves with cross-ratios}

\subsection*{Existence of contracted bounded edges}
The aim of this subsection is to prove Propositions \ref{prop:contracted_bounded_edge}, \ref{prop:contracted_bounded_edge_no_point_conditions}, which are crucial for the recursion we aim for. They guarantee that the tropical stable maps we are dealing with have a contracted bounded edge at which we can split them. Proposition \ref{prop:contracted_bounded_edge} covers the case where we have at least one point condition. Proposition \ref{prop:contracted_bounded_edge_no_point_conditions} covers the case of no point conditions.

\begin{proposition}\label{prop:contracted_bounded_edge}
Let $n\geq 1$ and let $C$ be a tropical stable map contributing to $N_{d}\left(p_{\underline{n}},L_{\underline{\kappa}}, \lambda_{[l-1]},\lambda'_l \right)$, where $\lambda'_l$ is a non-degenerated tropical cross-ratio. If $|\lambda'_l|$ is large, then there is exactly one contracted bounded edge  in $C$.
\end{proposition}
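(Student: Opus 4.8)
The plan is to exploit the interpretation of a non-degenerated tropical cross-ratio $\lambda'_l=(\beta_1\beta_2|\beta_3\beta_4)$ as a path of fixed length $|\lambda'_l|$ inside $C$, together with a dimension/boundedness argument that caps the lengths of all other bounded edges independently of $|\lambda'_l|$. By Remark \ref{remark:path_criterion}, the cross-ratio $\lambda'_l$ forces a path $P$ in $C$ connecting the ends $\beta_1,\dots,\beta_4$ along which the prescribed length $|\lambda'_l|$ is distributed; more precisely the coordinate $|\lambda'_l|$ on the ray $(\beta_1\beta_2|\beta_3\beta_4)$ in $\mathcal{M}_{0,4}$ equals the total length of the segment of $\ft_{\lambda'_l}(C)$ separating the pairs, which is a sum of lengths of certain bounded edges of $C$. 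So making $|\lambda'_l|$ large forces at least one bounded edge lying on this path to have large length.

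First I would establish that all bounded edges \emph{other} than those carrying the large length are bounded by a constant depending only on the fixed data (the points $p_{\underline{n}}$, the multi lines $L_{\underline{\kappa}}$, the degenerated cross-ratios $\lambda_{[l-1]}$ and the degree $d$), but \emph{not} on $|\lambda'_l|$. This is the heart of the argument: since $n\geq 1$, at least one contracted end is pinned to a fixed point $p_i\in\mathbb{R}^2$ via $\ev_i$, which anchors the image $h(C)$ in the plane. Because $C$ has degree $d$, its unbounded directions are fixed, so the image lies within a bounded-complexity configuration once anchored; the point and line conditions then constrain the lengths of the edges not on the forced path. Formally I would run this as a dimension count using Definition \ref{def:general_pos}: the conditions are set up so that $Z_d$ is $0$-dimensional, which means the combinatorial type of $C$ together with all but one length-degree of freedom is rigid, and the single remaining degree of freedom is exactly the one absorbing the large value $|\lambda'_l|$.

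Next I would argue that the large length must be realized by a \emph{contracted} bounded edge. The direction vectors of the non-contracted edges are constrained by the balancing condition and the fixed degree $d$; if the long edge had a nonzero direction vector, its image $h$ would move a point of $h(C)$ arbitrarily far from the anchoring point $p_i$, which — combined with the fixed ends and the remaining bounded point/line conditions — is incompatible with $C$ satisfying all the conditions of $Z_d$ for generic fixed data once $|\lambda'_l|$ exceeds the constant bound from the previous step. Hence the edge carrying the large length maps to a single point, i.e.\ it is a contracted bounded edge.

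Finally I would prove uniqueness: \emph{exactly one} such edge exists. Existence of at least one long contracted bounded edge follows from the above. For uniqueness I would use the $0$-dimensionality of $Z_d$ again: two independent long bounded edges would give two independent unbounded directions of deformation of $C$ within the locus satisfying all conditions, contradicting that the cycle is $0$-dimensional; and the single length $|\lambda'_l|$ distributed along the path, once forced to be large, concentrates on one edge because all others on the path are capped by the constant bound. The main obstacle I anticipate is making the "bounded independently of $|\lambda'_l|$" step fully rigorous: one must show that only the length along the $\lambda'_l$-path can grow, which requires carefully separating the length coordinates of $\mathcal{M}_{0,m}(\mathbb{R}^2,d)$ into those constrained by the fixed conditions and the single free coordinate governed by $\ft_{\lambda'_l}$, and verifying via the general-position hypothesis that no other length is forced to compensate.
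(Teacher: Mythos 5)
Your overall strategy (drop $\lambda'_l$, let the cross-ratio length grow, and show the growth must concentrate in a single contracted bounded edge) points in the same direction as the paper, but the central step of your plan is exactly the part you leave unproven, and the justification you offer for it does not work. You claim that because $Z_{d}\left(p_{\underline{n}},L_{\underline{\kappa}}, \lambda_{[\tilde{l}]},\mu'_{[l']} \right)$ is $0$-dimensional, ``all but one length-degree of freedom is rigid'' and the remaining one absorbs $|\lambda'_l|$. This is not what $0$-dimensionality gives you. Once $\lambda'_l$ is dropped one obtains the $1$-dimensional cycle $Y$ of Definition \ref{def:movable_component}, and a point moving along an unbounded ray of $Y$ deforms a whole subgraph (the movable component), stretching \emph{several} non-contracted bounded edges simultaneously; a priori such a stretch can lie on the $\lambda'_l$-path and push the $\mathcal{M}_{0,4}$-coordinate to infinity without any contracted edge ever appearing. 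Ruling this out is the actual content of the proposition, and it is what the paper's entire apparatus is for: the Angle Lemma (Lemma \ref{lemma:angle_lemma}), the half-plane partial orders and chains, Classification \ref{classification:types_of_vertices}, and finally Corollary \ref{corollary:no_chains_strong}, which shows the movable component is a single vertex; the proof of Proposition \ref{prop:contracted_bounded_edge} then still needs a case analysis on that vertex to see that its unbounded movement cannot make $\ft_{\lambda'_l}(C)$ grow without bound. None of this is replaced by a dimension count, and the difficulty is genuinely worse than in the classical Gathmann--Markwig string argument precisely because of the multi-line conditions and the higher-valent vertices coming from the degenerated cross-ratios $\lambda_{[l-1]}$.

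Your third step has a concrete error as well: you argue that a long edge of nonzero direction is impossible because it ``would move a point of $h(C)$ arbitrarily far from the anchoring point $p_i$, which is incompatible with the conditions.'' Being far from the anchor is not by itself incompatible with anything: the curve has unbounded ends, multi lines are unbounded, and a part of the curve constrained only by multi-line conditions can legitimately sit arbitrarily far from $p_i$, with its vertices sliding along those lines (this is exactly the phenomenon of type (II)/(IIIa)/(IIIb) vertices and Remark \ref{remark:change_of_movement_direction} that the paper must control). The contradiction has to be extracted from balancing, general position, and the path criterion via Corollary \ref{cor:CR_pfade_ueber_alle_edges_an_vertex}, as in the paper's case analysis, not from distance to the anchor alone. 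Your uniqueness argument is essentially the paper's (an extra contracted bounded edge yields a free length parameter, contradicting $0$-dimensionality), but since both existence and contractedness of the long edge rest on the unproven capping step, the proposal as it stands has a genuine gap rather than an alternative proof.
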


To keep track of the overall structure of the proof of Proposition \ref{prop:contracted_bounded_edge}, we briefly outline important steps:
\begin{itemize}
\item
Definition \ref{def:movable_component}: Forget $\lambda'_l$, to obtain a $1$-dimensional cycle $Y$.
\item
Definition \ref{def:movable_component}, Remark \ref{remark:change_of_movement_direction}, Example \ref{ex:direction_of_movement_changes}: Consider the $1$-dimensional rays of $Y$. They correspond to tropical curves $\Gamma$ that satisfy $p_{\underline{n}},L_{\underline{\kappa}}, \lambda_{[l-1]}$ such that $\Gamma$ admits a movement which gives rise to an unbounded $1$-dimensional family of curves of the same combinatorial type as $\Gamma$. Hence we should study tropical curves $\Gamma$ that have a \textit{movable component} (i.e. a subgraph) $B$ which can be moved unboundedly without changing the combinatorial type of $\Gamma$.
\item
Definition \ref{def:partial_order}, Corollary \ref{corollary:no_chains_strong}: Show that $B$ contains a single vertex. For this, we define \textit{chains} of vertices in $B$ and show that no chain has more than one element.
\item
Proof of Proposition \ref{prop:contracted_bounded_edge}: Conclude that there must be a contracted bounded edge.
\end{itemize}

\begin{definition}[Movable component]\label{def:movable_component}
Let $\Gamma$ be a tropical curve with no contracted bounded edge coming from a stable map in the $1$-dimensional cycle (for notation, see Definition \ref{def:general_pos})
\begin{align*}
Y:=\prod_{k\in\underline{\kappa}}\ev_k^*(L_k)\cdot \prod_{i\in\underline{n}} \ev_i^*\left( p_i\right)\cdot \prod_{j=1}^{l-1} \ft_{\lambda_j}^*\left( 0\right) \cdot \mathcal{M}_{0,m}\left(\mathbb{R}^2,d\right)
\end{align*}
such that $\Gamma$ gives rise to a $1$-dimensional family of curves by moving some of its vertices. Since the family obtained by moving vertices of $\Gamma$ is $1$-dimensional, no vertex can be moved freely, i.e. in each possible direction. Hence each vertex of $\Gamma$ is either \textit{fixed}, i.e. it can not be moved at all, or \textit{movable} in a direction given by a vector in $\mathbb{R}^2$ which we call \textit{direction of movement} of $v$. Directions of movements of vertices are indicated in Figure \ref{Figure_23} of Example \ref{ex:direction_of_movement_changes}.
Since each movable vertex $v$ cannot move freely, its movement is restricted by a condition imposed to it via an edge adjacent to $v$.
More precisely, $v$ either needs to be adjacent to a fixed vertex or to a contracted end which satisfies a multi line condition.
The connected component of $\Gamma$ which consists of all movable vertices of $\Gamma$ (and edges connecting movable vertices) is called the \textit{movable component} of $\Gamma$. Notice that there is exactly one movable component since $\Gamma$ gives rise to a $1$-dimensional family only.
A connected component of $\Gamma$ that is obtained from $\Gamma$ by removing the movable component is called \textit{fixed component}. We say that a movable component allows an \textit{unbounded movement}, if the movement of the movable component gives rise to a family of curves of the same combinatorial type as $\Gamma$ that is unbounded.
\end{definition}

\begin{remark}\label{remark:change_of_movement_direction}
Consider a $1$-dimensional family of curves of the same combinatorial type that is unbounded and a movable component within some curve of this family that allows an unbounded movement. Notice that the direction of movement $b$ of a vertex $v$ in this movable component might change as moving the component generates the family. Since $v$ is either adjacent to a fixed vertex or adjacent to an end satisfying a multi line condition, $b$ can only change, when $v$ is adjacent to an end that satisfies a multi line condition $L$. Thus $b$ can only change if $v$ passes over the vertex of $L$, see Example \ref{ex:direction_of_movement_changes}. Hence the direction of movement of a vertex in the movable component cannot change if we already moved the movable component enough. In the following we focus on movable components that allow an unbounded movement and that already have been moved sufficiently such that we can assume that the direction of movement of each vertex therein does not change anymore when moving. In particular, we may assume that the direction of movement of a vertex satisfying a multi line condition is parallel to $(-1,0),(0,-1)$ or $(1,1)$.
\end{remark}

\begin{example}\label{ex:direction_of_movement_changes}
Figure \ref{Figure_23} provides an example of a curve $C$ in $\mathbb{R}^2$ whose contracted ends labeled with $1,2,4,5$ satisfy point conditions and the contracted end labeled with $3$ satisfies a multi line condition (the dashed line). The vertex $v$ adjacent to the end labeled with $3$ is in the movable component of $C$ and the direction of movement $b$ (indicated by an arrow) of $v$ might changes as $v$ is moved. The movement shown in Figure \ref{Figure_23} is bounded.

\begin{figure}[H]
\centering
\def\svgwidth{450pt}
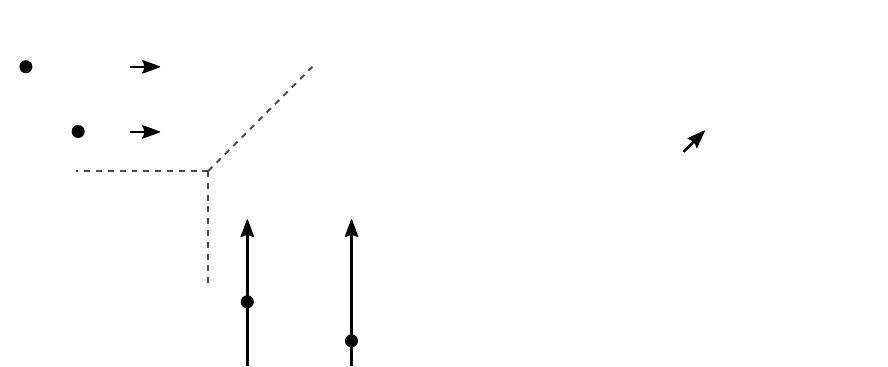
\caption{A curve satisfying point conditions and one multi line condition. The movable component is drawn in bold red. The arrows indicate the directions of movement. The movement shown is bounded.}
\label{Figure_23}
\end{figure}

\end{example}

\begin{remark}
Showing that $B$ contains a single vertex is non-trivial. However, the difficulties arise primarily due to the cross-ratios. If we have no cross-ratios and thus every vertex in our tropical curves is $3$-valent, then the movable component boils down to a string as introduced in \cite{KontsevichPaper}, which can be thought of as a single chain.
\end{remark}

\begin{classification}[Types of movable vertices]\label{classification:types_of_vertices}
Let $\Gamma$ be a tropical curve as in Definition \ref{def:movable_component}. If there is a vertex $v$ in the movable component of $\Gamma$ that is adjacent to a fixed component and all of its adjacent edges and ends which are non-contracted are parallel, then the movable component of $\Gamma$ has exactly one vertex, namely $v$. Otherwise $\Gamma$ would not give rise to a $1$-dimensional family only.

Hence the following classification is complete if we assume that the movable component of $\Gamma$ has more than $1$ vertex (if it has exactly $1$ vertex, then we can directly jump to the proof of Proposition \ref{prop:contracted_bounded_edge}): We distinguish $4$ types of vertices in the movable component.

\begin{tabular}{lp{13cm}}
\textbf{Type (I)} & vertices are adjacent to a fixed component and not all adjacent edges and non-contracted ends are parallel.\\
\textbf{Type (II)} & vertices are not $3$-valent and adjacent to a contracted end which satisfies a multi line condition.\\
\textbf{Type (IIIa)} & vertices are $3$-valent, adjacent to two bounded edges and adjacent to a contracted end which satisfies a multi line condition\\
\textbf{Type (IIIb)} & vertices are $3$-valent, adjacent to one bounded edge, a contracted end which satisfies a multi line condition and an end in standard direction.\\
\end{tabular}
\,
Throughout this section we use the assumption that the movable component of $\Gamma$ has more than $1$ vertex whenever we refer to this classification of vertices.
\end{classification}

\begin{construction}\label{constr:tilde_Gamma}
In the following we often forget the vertices of type (IIIa) and type (IIIb) in $\Gamma$ by gluing the non-contracted edges adjacent to a vertex of type (IIIa) (resp. type (IIIb)) together and obtain a tropical curve denoted by $\tilde{\Gamma}$. \textit{We fix this notation of $\tilde{\Gamma}$ throughout this section}.

If $\tilde{\Gamma}$ allows no $1$-dimensional movement, then the only vertices in the movable component of $\Gamma$ are of type (IIIa) or (IIIb). Hence there is no type (I) vertex in the movable component of $\Gamma$. Thus $\Gamma$ has no fixed component. In particular $p_{[n]}=\emptyset$, but this case is treated separately in Lemma \ref{lemma:initial_values_1}, Lemma \ref{lemma:initial_values_2} and Proposition \ref{prop:contracted_bounded_edge_no_point_conditions}. Therefore we can assume that $\tilde{\Gamma}$ allows an unbounded $1$-dimensional movement.
\end{construction}

\begin{figure}[H]
\centering
\def\svgwidth{220pt}
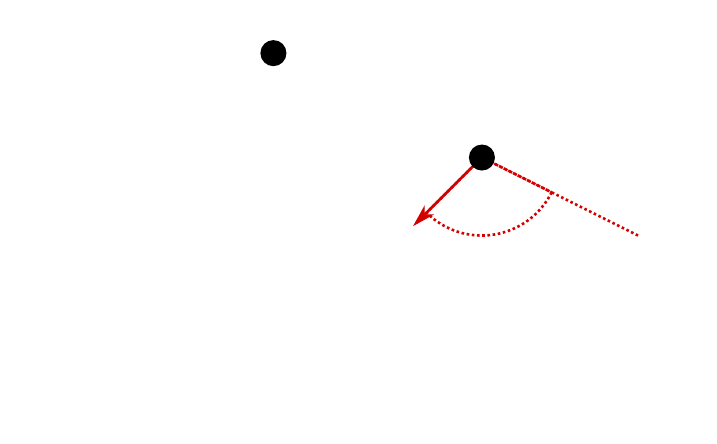
\caption{The cone $\sigma_{v_2}(b_1,e)$ in which the direction of movement of $v_2$ lies. The slope of the edge connecting $v_1,v_2$ is fixed during the movement. Hence the translation $b_2+v_2$ of the direction of movement $b_2$ of $v_2$ is contained in the open half-plane $H$ whose boundary is $\langle e \rangle + v_2$ and whose interior contains $b_1+v_1$.}
\label{Figure_2}
\end{figure}

\begin{lemma}[Angle Lemma]\label{lemma:angle_lemma}
Let $\tilde{\Gamma}$ be a tropical curve in $\mathbb{R}^2$ as in Construction \ref{constr:tilde_Gamma} that allows an unbounded $1$-dimensional movement.
Let $v_1,v_2$ be adjacent vertices in the movable component of $\tilde{\Gamma}$, let $b_1\neq 0$ be the direction of movement of $v_1$ and let $v(e,v_1)\neq b_1$ be the direction vector at $v_1$ of the edge $e$ that connects $v_1$ and $v_2$. Then the direction of movement $b_2$ of $v_2$ lies in the half-open cone
\begin{align*}
 \sigma_{v_2}(b_1,e):=\lbrace x\in\mathbb{R}^2 \mid x=v_2+\lambda_{1} v(e,v_1)+\lambda_{2} b_{1}, \; \lambda_{1} \in\mathbb{R}_{\geq 0},\; \lambda_{2}\in\mathbb{R}_{>0} \rbrace
\end{align*}
centered at $v_2$ that is spanned by $b_1$ and $v(e,v_1)$, where half-open means that the boundary of $\sigma_{v_2}(b_1,e)$ that is generated by $b_1$ is part of the cone and the boundary that is generated by $v(e,v_1)$ is not part of the cone, while $v_2$ itself is also not part of the cone.
\end{lemma}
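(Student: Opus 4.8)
The plan is to exploit the rigidity of the combinatorial type along the family: as the movable component is moved to sweep out the $1$-dimensional family, every edge must retain its direction vector, so only edge lengths may vary. I would work in the stabilized regime of Remark \ref{remark:change_of_movement_direction}, where the component has already been moved far enough that the directions of movement no longer change, so that the movement may be taken affine-linear in a parameter $t \geq 0$. Thus I write the positions of the two vertices as $v_1 + t b_1$ and $v_2 + t b_2$, with $b_1, b_2$ constant.

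First I would record that initially the edge $e$ runs from $v_1$ to $v_2$ in the direction $v(e,v_1)$, i.e.\ $v_2 - v_1 = c\, v(e,v_1)$ for some $c > 0$. Since the combinatorial type is preserved throughout the family, the segment joining the moved vertices,
\[
(v_2 + t b_2) - (v_1 + t b_1) = c\, v(e,v_1) + t\,(b_2 - b_1),
\]
must remain a positive multiple of $v(e,v_1)$ for every $t$ in the (unbounded) range of the movement. As $v_2 - v_1$ is already parallel to $v(e,v_1)$, this forces $b_2 - b_1$ to be parallel to $v(e,v_1)$ as well, so that $b_2 = b_1 + \mu\, v(e,v_1)$ for some $\mu \in \mathbb{R}$.

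Next I would pin down the sign of $\mu$ using the unboundedness hypothesis. The length of $e$ at parameter $t$ is proportional to $c + t\mu$, which must stay positive for all $t \geq 0$. If $\mu < 0$, this expression vanishes at the finite value $t = -c/\mu > 0$, i.e.\ $e$ would contract and the combinatorial type would change at finite time, contradicting that the movement is unbounded and type-preserving; hence $\mu \geq 0$. Writing $b_2 = 1 \cdot b_1 + \mu\, v(e,v_1)$ then exhibits $v_2 + b_2$ as the point $v_2 + \lambda_1 v(e,v_1) + \lambda_2 b_1$ of $\sigma_{v_2}(b_1,e)$ with coordinates $\lambda_2 = 1 > 0$ and $\lambda_1 = \mu \geq 0$, and the scaling invariance of the direction of movement places the whole ray $\mathbb{R}_{>0} b_2$ inside the cone. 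Since $\lambda_2 = 1 \neq 0$ and $b_1 \neq 0$, the point $b_2$ lies strictly off the boundary generated by $v(e,v_1)$, which simultaneously confirms that $v_2$ itself does not belong to the cone.

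The main obstacle is the first step: justifying cleanly that the direction $v(e,v_1)$ is genuinely fixed during the movement and that the movement may be taken affine-linear with constant $b_1, b_2$. This rests on the family consisting of a single combinatorial type together with Remark \ref{remark:change_of_movement_direction}, which guarantees that after moving the component sufficiently far the directions of movement stabilize. Once this rigidity is in place, the remaining parallelism and sign analysis is elementary linear algebra in $\mathbb{R}^2$.
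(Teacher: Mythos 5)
Your proof is correct, and it shares its engine with the paper's own (very terse) argument: since the movement is unbounded and type-preserving, the length of $e$ cannot shrink, and this pins everything down. Where you genuinely differ is that you make explicit a second ingredient the paper leaves implicit, namely the rigidity of the combinatorial type: the direction vector of $e$ is constant along the family, so $(v_2+tb_2)-(v_1+tb_1)$ must remain a positive multiple of $v(e,v_1)$, forcing $b_2-b_1=\mu\,v(e,v_1)$, after which non-shrinking gives $\mu\geq 0$. This step is not cosmetic. By itself, the non-shrinking of $|(v_2+tb_2)-(v_1+tb_1)|$ only yields the half-plane condition $(b_2-b_1)\cdot v(e,v_1)\geq 0$, which does \emph{not} imply membership in $\sigma_{v_2}(b_1,e)$; the paper's proof records only the non-shrinking observation and then asserts a dichotomy about the affine lines $\langle b_1\rangle+v_1$ and $\langle b_2\rangle+v_2$ meeting outside $H$, which again is weaker than the cone statement being proved. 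So your route is the more complete one, and it even proves a sharper conclusion: up to positive rescaling of the velocity parametrization, $b_2$ lies on the specific ray $b_1+\mathbb{R}_{\geq 0}\,v(e,v_1)$ inside the cone, not merely somewhere in $\sigma_{v_2}(b_1,e)$. The one loose end, which you share with the paper, is the degenerate case $\langle b_1\rangle=\langle v(e,v_1)\rangle$, where $b_2$ could vanish and the cone degenerates to a ray; this is implicitly excluded because $v_2$ belongs to the movable component (so $b_2\neq 0$) and because parallel $b_1$, $e$ would produce a two-dimensional movement, as the paper itself argues later in Lemma \ref{lemma:maximal_chains}, but it deserves a sentence in both write-ups.
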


\begin{proof}
This is true since the length of the edge $e'$ that connects $v_1$ and $v_2$ cannot shrink when moving $v_1$ and $v_2$, otherwise the movement would be bounded. Therefore the (affine) lines $\langle b_1 \rangle+v_1$ and $\langle b_2\rangle+v_2$ must either be parallel or their point of intersection does not lie in $H$.
\end{proof}

\begin{definition}[Partial order]\label{def:partial_order}
We use the notation from Construction \ref{constr:tilde_Gamma}. Let $\tilde{\Gamma}$ be a tropical curve in $\mathbb{R}^2$ that allows an unbounded $1$-dimensional movement and let $H$ be an open half-plane. If we translate $H$ to a vertex $v\in\tilde{\Gamma}$, i.e. $v$ is contained in the boundary of $H$, then we denote the translated half-plane by $H_v$. Let $M$ be the set of all vertices of the movable component of $\tilde{\Gamma}$, i.e. $M$ consists of all type (I) and type (II) vertices of the movable component of $\Gamma$. The half-plane $H$ induces a partial order $\Omega(H)$ on $M$ as follows: For $v_1,v_2\in M$ define
\begin{align*}
v_1\geq v_2 :\iff 
\begin{cases}
v_1=v_2, \textrm{ or} \\
v_2 \textrm{ is adjacent to } v_1\textrm{ and } v_2\in H_{v_{1}}.
\end{cases}
\end{align*}
Here, we only use open half-planes $H$ such that $b_1+v_1\in H_{v_1}$. Therefore if $v_1\geq \dots \geq v_n$ is a maximal chain and $b_i$ is the direction of movement of $v_i$ for $i=1,\dots,n$, then $b_i+v_i\in H_{v_i}$ for $i=1,\dots,n$ by inductively applying Lemma \ref{lemma:angle_lemma}.

\end{definition}

\begin{notation}\label{notation:direction_of_movement}
Given a chain $v_1\geq \dots \geq v_n$ in the movable component of $\tilde{\Gamma}$, we denote the direction of movement of $v_i$ by $b_i$ for $i=1,\dots, n$ throughout this section. If such a chain is maximal, then an edge connecting $v_i$ and $v_{i+1}$ is usually denoted by $e_i$ for $i=1,\dots, n-1$. by abuse of notation, we often write $e_i$ instead of the direction vector $v(e_i,v_i)$ at $v_i$ from Definition \ref{definition:moduli_stable_maps}.
\end{notation}

\begin{figure}[H]
\centering
\def\svgwidth{400pt}
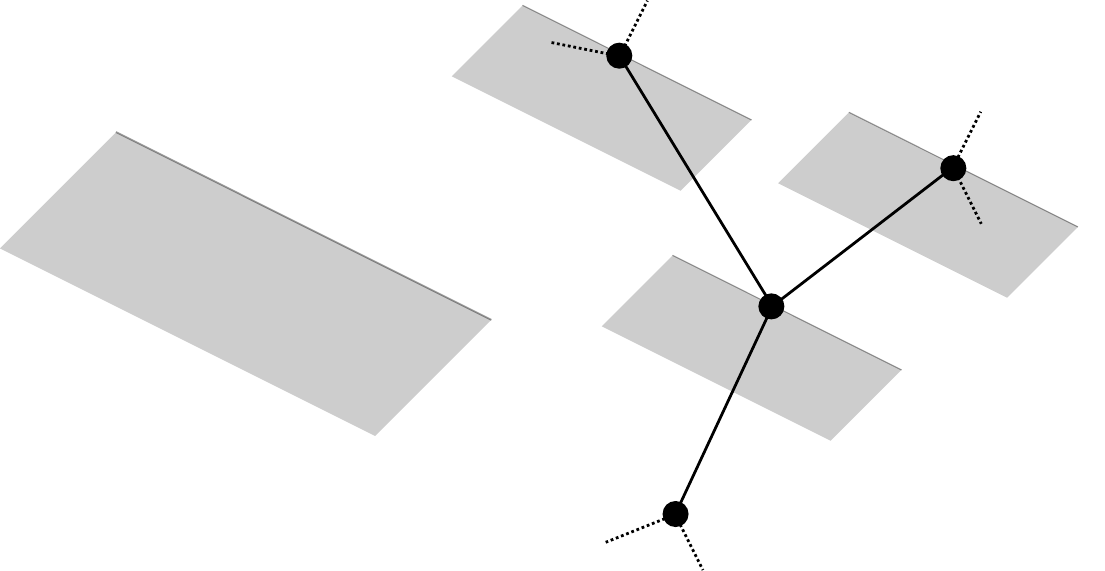
\caption{This is an example of the partial order $\Omega(H)$ for $H\subset\mathbb{R}^2$ which is an open half-plane as shown on the left (the boundary of the half-plane is darkened). On the right there is a sketch of a tropical curve in $\mathbb{R}^2$ such that $v_1\geq v_3 \geq v_4$ and $v_2\geq v_3 \geq v_4$ with respect to the order $\Omega(H)$.}
\label{Figure_6}
\end{figure}

\begin{lemma}[Maximal chains]\label{lemma:maximal_chains}
We use Notation \ref{notation:direction_of_movement}. Let $\tilde{\Gamma}$ be a tropical curve in $\mathbb{R}^2$ as in Construction \ref{constr:tilde_Gamma}, that allows an unbounded $1$-dimensional movement.
Let $v_1\geq\dots\geq v_n$ be a maximal chain with $n>1$ and $b_1+v_1\in H_{v_1}$ in $\tilde{\Gamma}$ with respect to $\Omega(H)$ as in Definition \ref{def:partial_order}. Then there is no vertex $v_{n+1}\in \tilde{\Gamma}$ adjacent to $v_n$ such that $v_{n+1}\in H_{v_n}$.
\end{lemma}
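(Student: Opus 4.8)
The plan is to argue by contradiction: assume there is a vertex $v_{n+1}\in\tilde\Gamma$ adjacent to $v_n$ with $v_{n+1}\in H_{v_n}$, and derive a contradiction with either the maximality of the chain or the unboundedness of the movement. Fix the normal $u\in\mathbb{R}^2$ of $H$ oriented so that $H=\lbrace x\mid\langle x,u\rangle>0\rbrace$; then for any vertex $w$ one has $x\in H_{w}\iff\langle x-w,u\rangle>0$. Two facts from the setup are available. First, $b_i+v_i\in H_{v_i}$ for every $i$ in the chain: this is exactly what Definition \ref{def:partial_order} records, obtained by inductively applying the Angle Lemma \ref{lemma:angle_lemma} starting from $b_1+v_1\in H_{v_1}$. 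Second, since $v_n\in H_{v_{n-1}}$ we have $\langle v_n-v_{n-1},u\rangle>0$, hence $v_{n-1}\notin H_{v_n}$ (here $n>1$ is used). I would then split according to whether $v_{n+1}$ is movable or fixed, these being the only possibilities in $\tilde\Gamma$ by Construction \ref{constr:tilde_Gamma}.

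If $v_{n+1}$ lies in $M$, i.e.\ it is a movable (type (I) or type (II)) vertex, then $v_{n+1}\neq v_{n-1}$ because $v_{n-1}\notin H_{v_n}$, so, as $\tilde\Gamma$ is a tree and the chain is a path, $v_{n+1}$ is a genuinely new vertex. By the very definition of $\Omega(H)$ the relation $v_n\geq v_{n+1}$ holds, so $v_1\geq\dots\geq v_n\geq v_{n+1}$ is a strictly longer chain, contradicting maximality. This case is immediate.

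The substantial case is when $v_{n+1}$ is a \emph{fixed} vertex. Let $e_n$ be the edge joining $v_n$ and $v_{n+1}$ and write $w:=v(e_n,v_n)$ for its combinatorially fixed direction vector at $v_n$, so that throughout the family $v_{n+1}=v_n+t\,w$ for a length parameter $t>0$ depending on the parameter $s$ of the movement. Differentiating and using that $v_{n+1}$ stays fixed while $v_n$ moves with velocity $b_n$ gives $b_n+\dot t\,w=0$, so $b_n=-\dot t\,w$ is parallel to $w$, with $\dot t\neq 0$ since $b_n\neq 0$. Comparing signs against $u$: from $v_{n+1}\in H_{v_n}$ and $t>0$ one gets $\langle w,u\rangle>0$, while $b_n+v_n\in H_{v_n}$ gives $\langle b_n,u\rangle=-\dot t\,\langle w,u\rangle>0$, forcing $\dot t<0$. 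Thus the length of $e_n$ strictly decreases along the chosen (unbounded) direction of the family, so it reaches $0$ at a finite parameter value; there the combinatorial type degenerates and the movement cannot be unbounded, contradicting the hypothesis on $\tilde\Gamma$. Hence no such $v_{n+1}$ exists.

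I expect the fixed-vertex case to be the main obstacle: the movable case is a one-line consequence of maximality, whereas excluding an adjacent fixed vertex in $H_{v_n}$ requires translating ``$v_{n+1}$ stays put while $v_n$ moves'' into the parallelism $b_n\parallel v(e_n,v_n)$ and then keeping the sign bookkeeping with respect to $u$ consistent, so that the sign of $\dot t$ can be read off and linked to boundedness. One must also be careful that the half-plane $H$ (and hence $u$) is the one from Definition \ref{def:partial_order} satisfying $b_1+v_1\in H_{v_1}$, since it is precisely this choice that propagates the inequalities $b_i+v_i\in H_{v_i}$ down the chain to $i=n$.
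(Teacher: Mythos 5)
Your proof is correct, and its core coincides with the paper's: once $b_n+v_n\in H_{v_n}$ is available, an adjacent movable vertex $v_{n+1}\in H_{v_n}$ yields $v_n\geq v_{n+1}$ by Definition \ref{def:partial_order}, hence a longer chain, contradicting maximality. The differences are in bookkeeping and scope. The paper re-derives $b_n+v_n\in H_{v_n}$ inside the proof by checking the hypothesis of the Angle Lemma \ref{lemma:angle_lemma} (if $\langle b_{n-1}\rangle=\langle e_{n-1}\rangle$ the movement would be $2$-dimensional), whereas you invoke the fact already recorded in Definition \ref{def:partial_order}; both are legitimate. More substantially, the paper then applies the order relation to $v_{n+1}$ without comment, even though $\Omega(H)$ is defined only on the set $M$ of movable vertices, so the case of a \emph{fixed} adjacent vertex $v_{n+1}\in H_{v_n}$ is not literally covered by the maximality argument. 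You treat that case separately: adjacency to a fixed vertex forces $b_n$ to be parallel to $v(e_n,v_n)$, and the sign comparison against the normal of $H$ shows the edge shrinks along the unbounded direction of the movement, a contradiction. This is essentially the argument the paper deploys only later, inside the proof of Lemma \ref{lemma:about_chains_weak_vorarbeit} (``if $e$ connects $v_n$ to a fixed component\dots''); your version front-loads it into the lemma itself, so the statement as phrased --- about all vertices of $\tilde{\Gamma}$ --- is genuinely proved. In short, the paper's proof is shorter but defers the fixed-vertex exclusion to the point of use, while yours is self-contained and matches the literal statement.
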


\begin{proof}
We use Notation \ref{notation:direction_of_movement}.
By definition, $v_n,b_{n-1}+v_{n-1}\in H_{v_{n-1}}$ and there is an edge $e_{n-1}$ connecting $v_{n-1}$ to $v_n$. If $\langle b_{n-1} \rangle = \langle e_{n-1} \rangle$, then $b_{n-1}$ and $b_n$ are parallel. Thus we have a $2$-dimensional movement which yields a contradiction since we just allow a $1$-dimensional movement. In total, the requirements of Lemma \ref{lemma:angle_lemma} are fulfilled such that $b_n+v_n\in H_{v_n}$ follows. Since there is an edge $e_n$ that connects $v_n$ to $v_{n+1}$ and $v_{n+1}\in H_{v_n}$, Definition \ref{def:partial_order} yields $v_{n}\geq v_{n+1}$ with respect to $\Omega(H)$. This contradicts our maximality assumption.
\end{proof}

%

\begin{definition}[Special half-planes]\label{definition:special_half-planes}
Let $e\in\mathbb{R}^2$ be a vector of standard direction, i.e. $e$ is in $\lbrace (-1,0),(0,-1),(1,1)\rbrace$. An open half-plane is called \textit{special half-plane} if the affine subspace $\langle e \rangle + v\subset \mathbb{R}^2$ for some $v\in\mathbb{R}^2$ that is generated by $e$ is the boundary of $H$. There are six special half-planes up to translation, see Figure \ref{Figure_27}.
\end{definition}

\begin{figure}[H]
\centering
\def\svgwidth{330pt}
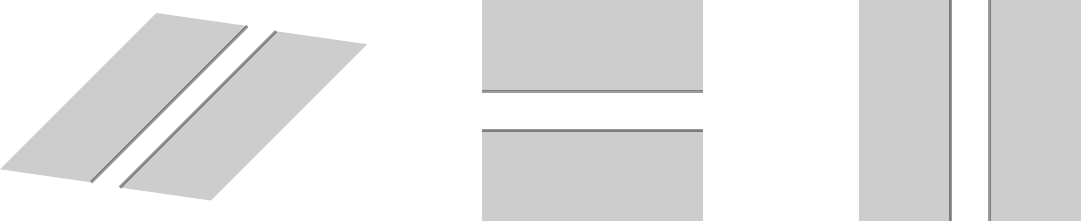
\caption{All six special half-planes up to translation. The boundary of each is darkened.}
\label{Figure_27}
\end{figure}

%

\begin{definition}
An open half-plane $H$ is called $1$-ray (resp. $2$-ray) half-plane if it contains exactly one (resp. two) rays of standard direction. Notice that special half-planes are $1$-ray half-planes.
\end{definition}

\begin{lemma}\label{lemma:about_chains_weak_vorarbeit}
Let $\tilde{\Gamma}$ be a tropical curve in $\mathbb{R}^2$ as in Construction \ref{constr:tilde_Gamma} that allows an unbounded $1$-dimensional movement. Let $v_1$ be a vertex of the movable component of $\tilde{\Gamma}$. Let $H$ be a $1$-ray half-plane that contains a ray of standard direction $D$. If $v_1\geq\dots\geq v_n$ is a maximal chain starting at $v_1$ with respect to $\Omega(H)$ such that $n>1$ and $b_1+v_1\in H_{v_1}$, then there is an end $e$ of $\tilde{\Gamma}$ adjacent to $v_n$ which is parallel to $D$.
\end{lemma}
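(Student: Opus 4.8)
The plan is to read the claim off from the balancing condition at $v_n$ by pairing it against the inner normal of $H$. First I would translate so that $v_n$ sits at the origin and introduce the inner normal $\nu\in\mathbb{R}^2$ of the open half-plane $H_{v_n}$, normalized so that a point $x$ lies in $H_{v_n}$ exactly when $\langle\nu,x\rangle>0$. Since the chain $v_1\geq\dots\geq v_n$ is maximal, Lemma \ref{lemma:maximal_chains} guarantees that no vertex of $\tilde{\Gamma}$ adjacent to $v_n$ lies in $H_{v_n}$. Hence for every bounded edge $e$ at $v_n$ running to another vertex, the far endpoint lies in the closed complement of $H_{v_n}$, which yields $\langle\nu,v(e,v_n)\rangle\leq 0$.

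The key additional input comes from the hypothesis $n>1$. The relation $v_{n-1}\geq v_n$ means $v_n\in H_{v_{n-1}}$, i.e. $\langle\nu,v_n-v_{n-1}\rangle>0$; since the edge $e_{n-1}$ points from $v_n$ towards $v_{n-1}$, this gives the \emph{strict} inequality $\langle\nu,v(e_{n-1},v_n)\rangle<0$. So the edge running back along the chain points strictly out of $H_{v_n}$, and therefore the total contribution of all bounded edges at $v_n$ to $\langle\nu,\cdot\rangle$ is strictly negative.

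I would then pair the balancing condition $\sum_{e\ni v_n}v(e,v_n)=0$ with $\nu$. The bounded edges contribute a strictly negative amount, the contracted ends contribute $0$ (their direction vector is zero), so the non-contracted ends at $v_n$ must contribute a strictly positive amount. At this point the $1$-ray hypothesis enters: among the three standard directions $(-1,0),(0,-1),(1,1)$, the half-plane $H$ contains the ray of exactly one of them, namely $D$, so that $\langle\nu,D\rangle>0$ while the other two pair non-positively with $\nu$. A strictly positive total from the ends then forces at least one non-contracted end adjacent to $v_n$ to have direction $D$, which is the end parallel to $D$ we are after.

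The step I expect to require the most care is excluding the degenerate configuration in which every edge and end at $v_n$ lies on the boundary line $\partial H_{v_n}$, where the pairing against $\nu$ vanishes identically and produces no $D$-end. This is precisely what the strict inequality of the second paragraph rules out: the edge $e_{n-1}$ to the predecessor in the chain cannot lie on $\partial H_{v_n}$, because $v_{n-1}\geq v_n$ places $v_n$ strictly inside the open half-plane $H_{v_{n-1}}$. Consequently the hypothesis $n>1$ is essential, and with it in hand no separate appeal to the type (I)/(II) classification of Classification \ref{classification:types_of_vertices} is needed.
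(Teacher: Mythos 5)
Your argument follows the same skeleton as the paper's proof: the strict inequality supplied by $v_{n-1}\geq v_n$, the balancing condition at $v_n$, maximality of the chain, and the $1$-ray hypothesis singling out $D$. The packaging differs slightly --- you pair balancing against the inner normal $\nu$ globally, while the paper first extracts from balancing a single edge $e$ with $e\in H_{v_n}$ and then classifies what $e$ can be --- but the computation is the same, and your final dichotomy (contracted ends contribute $0$, only $D$ among the standard directions pairs positively with $\nu$) is exactly how the paper concludes.

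The one step that does not match what the paper actually establishes is your very first one: you invoke Lemma \ref{lemma:maximal_chains} to conclude that \emph{no} vertex of $\tilde\Gamma$ adjacent to $v_n$ --- including vertices of fixed components --- lies in $H_{v_n}$. That is what the statement of Lemma \ref{lemma:maximal_chains} literally says, but its proof runs through the partial order $\Omega(H)$, which by Definition \ref{def:partial_order} is defined only on the set $M$ of vertices of the movable component; so that lemma, as proven, only excludes \emph{movable} neighbors of $v_n$ from $H_{v_n}$. This is precisely why the paper's proof of the present lemma treats the case ``$e$ connects $v_n$ to a fixed component'' by a separate argument: since the movement is unbounded and the slope of $e$ is preserved, $b_n$ must be anti-parallel to $v(e,v_n)$ (it moves $v_n$ away from the fixed component), while the Angle Lemma \ref{lemma:angle_lemma}, applied inductively along the chain, gives $b_n+v_n\in H_{v_n}$; hence $\langle\nu,v(e,v_n)\rangle<0$ for such an edge. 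Your proof needs this supplementary argument (note that $v_n$ can perfectly well be a type (I) vertex with a bounded edge to a fixed vertex) in order to justify $\langle\nu,v(e,v_n)\rangle\leq 0$ for \emph{all} bounded edges at $v_n$; with that case inserted, the rest of your pairing argument is correct and coincides with the paper's.
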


\begin{proof}
We use Notation \ref{notation:direction_of_movement}.
Notice that $v_{n-1}\geq v_n$. Hence $e_{n-1}+v_n \nin \overline{H}_{v_n}$, where $\overline{H}_{v_n}$ denotes the closure of $H_{v_n}$. Thus by balancing, there is an edge $e\in\tilde{\Gamma}$ adjacent to $v_n$ such that $e\in H_{v_n}$.
If $e$ connects $v_n$ to a fixed component, then $b_n+v_n\nin \overline{H}_{v_n}$ because the movement of $v_n$ should be unbounded, i.e. $b_n$ moves $v_n$ away from that fixed component while $\langle e \rangle +v_n = \langle b_n \rangle+v_n$, which contradicts that $b_n+v_n\in H_{v_n}$ by Lemma \ref{lemma:angle_lemma}.
Hence $e$ is an end of $\tilde{\Gamma}$ by Lemma \ref{lemma:maximal_chains}. Since $H_{v_n}$ is a $1$-ray half-plane containing exactly $1$ ray of standard direction $D$, the direction of $e$ is $D$.
\end{proof}

\begin{lemma}[About maximal chains, weak version]\label{lemma:about_chains_weak}
Let $\tilde{\Gamma}$ be a tropical curve in $\mathbb{R}^2$ as in Construction \ref{constr:tilde_Gamma} that allows an unbounded $1$-dimensional movement. Let $v_1$ be a vertex of the movable component of $\tilde{\Gamma}$. If there is a $1$-ray half-plane $H$ and $v_1\geq\dots\geq v_n$ is a maximal chain starting at $v_1$ with respect to $\Omega(H)$ such that $n>1$ and $b_1+v_1\in H_{v_1}$, then $v_n$ is a $3$-valent type (I) vertex.
\end{lemma}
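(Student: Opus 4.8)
The plan is to combine the preparatory Lemma \ref{lemma:about_chains_weak_vorarbeit} with a local analysis of the edges emanating from $v_n$, controlled entirely by the $1$-ray half-plane $H$. First I would record the three facts that the hypotheses hand us for free. By Lemma \ref{lemma:about_chains_weak_vorarbeit} there is an end $e$ of $\tilde{\Gamma}$ adjacent to $v_n$ that is parallel to the standard direction $D$; since $H$ is open and $1$-ray, the ray $\mathbb{R}_{\geq 0}D$ lies strictly inside $H$, so $e$ points strictly into $H_{v_n}$. By the convention recorded after Definition \ref{def:partial_order}, the direction of movement $b_n$ of $v_n$ satisfies $b_n+v_n\in H_{v_n}$, i.e. $v_n$ moves into $H_{v_n}$. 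Finally, since $v_n$ lies in the movable component of $\tilde{\Gamma}$, Construction \ref{constr:tilde_Gamma} together with Definition \ref{def:partial_order} tell us that $v_n$ is of type (I) or type (II).

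The first step is to rule out type (II), which then forces $v_n$ to be type (I). If $v_n$ were type (II) it would be anchored by a contracted end sitting on a multi line $L$, so by Remark \ref{remark:change_of_movement_direction} (after moving sufficiently far) $b_n$ would be parallel to a standard direction. Because $b_n$ points into the $1$-ray half-plane $H_{v_n}$ and $D$ is its unique inward standard direction, $b_n$ would have to point in direction $D$, i.e. parallel to the end $e$. I would then exploit maximality: by Lemma \ref{lemma:maximal_chains} no movable vertex adjacent to $v_n$ lies in $H_{v_n}$, so every edge at $v_n$ other than ends in direction $D$ and contracted ends points out of $\overline{H}_{v_n}$ --- edges to fixed components do so because preservation of the combinatorial type forces them to be parallel to $b_n$ while unboundedness forces them to point away from the fixed part, and ends in the remaining two standard directions do so because $H$ is $1$-ray. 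Projecting the balancing condition at $v_n$ onto the inner normal $\nu$ of $\partial H$, the only positive contributions come from ends in direction $D$, and I would show that a type (II) vertex (which is at least $4$-valent, its extra valence stemming from degenerate cross-ratios satisfied at it, cf. Corollary \ref{cor:CR_pfade_ueber_alle_edges_an_vertex}) cannot reconcile this $\nu$-balancing with the presence of its multi-line-anchored contracted end together with the chain edge $e_{n-1}$ pointing strictly out of $\overline{H}_{v_n}$.

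Once $v_n$ is known to be type (I), it is anchored to a fixed component by an edge $f$, and preservation of the combinatorial type again forces $f$ to be parallel to $b_n$, hence $f$ points in direction $-b_n$, i.e. strictly out of $H_{v_n}$. The chain edge $e_{n-1}$ also points strictly out of $\overline{H}_{v_n}$, which is exactly what $v_{n-1}\geq v_n$ records. The end $e$ points in. To finish I would show that these are the only three edges at $v_n$: any further edge is either (a) an edge to another movable vertex, which is excluded inside $H_{v_n}$ by Lemma \ref{lemma:maximal_chains} and hence is an additional out-edge, (b) a further end in a non-$D$ standard direction or a further contracted end, or (c) an additional edge to a fixed part or a second end in direction $D$; in each case I would feed the $\nu$-balancing together with the $1$-ray property back in to show that such an extra edge is incompatible with $v_n$ being the top of a maximal chain carrying an unbounded $1$-dimensional movement.

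The main obstacle, I expect, is precisely the valence bound and the exclusion of type (II): balancing alone does not cap the number of edges at $v_n$, since extra ends in direction $D$ or extra descending branches are not forbidden by balancing or by the $1$-dimensionality of the family in isolation. The real leverage must come from using simultaneously that $H$ is $1$-ray, so $D$ is the only inward standard direction; that maximality (Lemma \ref{lemma:maximal_chains}) forbids inward edges to movable vertices; and that the single anchor of $v_n$ must itself be parallel to $b_n$. Making the interaction of these three precise --- turning ``there is an inward end $e$'' into ``the inward part of $v_n$ is exactly one end while the outward part is exactly the anchor and the chain edge'' --- is the delicate heart of the argument.
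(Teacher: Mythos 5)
Your proposal has a genuine gap, and it sits exactly where you say it does: the valence bound. You correctly assemble the ``free'' facts (the end $e$ of direction $D$ at $v_n$ from Lemma \ref{lemma:about_chains_weak_vorarbeit}, the containment $b_n+v_n\in H_{v_n}$, the exclusion of inward movable neighbours by Lemma \ref{lemma:maximal_chains}), but the plan for capping the valence --- projecting the balancing condition onto the inner normal of $\partial H$ and combining it with the $1$-ray property and maximality --- cannot succeed, because those three constraints genuinely do not exclude higher valence: a vertex carrying, say, two ends of direction $D$ together with the chain edge $e_{n-1}$, the anchor $f$ and a further outward edge can be balanced, is compatible with maximality (which only forbids \emph{inward} movable neighbours), and does not by itself create a second degree of freedom. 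You concede this yourself in your closing paragraph, so what you have is a strategy whose central step is missing, not a proof.

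The ingredient that actually closes this step in the paper is not metric or balancing geometry but the combinatorial structure of the cross-ratios. If $\val(v_n)>3$, then general position forces a degenerated cross-ratio to be satisfied at $v_n$, and Corollary \ref{cor:CR_pfade_ueber_alle_edges_an_vertex} says that \emph{every} edge at $v_n$ --- in particular the end $e$ --- lies on the shortest path from $v_n$ to an end whose label is an entry of some cross-ratio in $\lambda_{v_n}$. Since cross-ratio entries are labels of \emph{contracted} ends (see Definition \ref{def:numbers_of_interest}) and $e$ is non-contracted, in the original curve $\Gamma$ the edge underlying $e$ must carry a type (IIIa) or (IIIb) vertex $v$ whose contracted end satisfies a multi line condition. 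Unboundedness then forces the direction of movement of $v$ to be parallel to $e$ (cf. Remark \ref{remark:change_of_movement_direction}), which produces a $2$-dimensional movement of the movable component --- the desired contradiction. Note that this argument simultaneously disposes of type (II): such vertices are by definition not $3$-valent, so once $\val(v_n)=3$ is established, $v_n$ must be of type (I), and no separate type (II) analysis of the kind you sketch is needed. Without an appeal to Corollary \ref{cor:CR_pfade_ueber_alle_edges_an_vertex} --- which you cite only in passing and never apply to the end $e$ --- the lemma cannot be recovered along the lines you propose.
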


\begin{proof}
We use Notation \ref{notation:direction_of_movement}.
By Lemma \ref{lemma:about_chains_weak_vorarbeit} there is an end $e$ of $\tilde{\Gamma}$ adjacent to $v_n$. Moreover, since $H$ is a $1$-ray half-plane containing exactly $1$ ray of standard direction $D$, the direction of $e$ is $D$.
Assume that the valency of $v_n$ is greater than $3$, i.e. there is a cross-ratio in $\lambda_{v_n}$. Since all cross-ratios have only labels of contracted ends as entries (see Definition \ref{def:numbers_of_interest}), we can apply Corollary \ref{cor:CR_pfade_ueber_alle_edges_an_vertex}. Therefore there is a vertex $v\in\Gamma$ connected to $v_n$ via $e$ such that $v$ is of type (IIIa) or type (IIIb) such that $v$ satisfies a multi line condition. Since the movement of $v$ is unbounded, its direction of movement, denoted by $b$, is parallel to $e$ (cf. Remark \ref{remark:change_of_movement_direction}). Therefore the movable component of $\Gamma$ allows a $2$-dimensional movement, which is a contradiction.

In total, $v_n$ can only be a $3$-valent type (I) vertex since we ruled out the other cases.
\end{proof}

\begin{corollary}\label{corollary:about_chains_weak}
If we make the same assumptions as in Lemma \ref{lemma:about_chains_weak} and additionally require that $H$ is a special half-plane (see Definition \ref{definition:special_half-planes}), then there exists no chain $v_1\geq \dots \geq v_n$ with respect to $\Omega(H)$ such that $n>1$ and $b_1+v_1\in H_{v_1}$.
\end{corollary}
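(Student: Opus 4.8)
The plan is to argue by contradiction: assume a chain $v_1\geq\dots\geq v_n$ with $n>1$ and $b_1+v_1\in H_{v_1}$ exists, extend it downward to a maximal chain starting at $v_1$ (this preserves $n>1$ and the condition on $b_1$), and then exploit the extra rigidity of a special half-plane to reach a contradiction at the terminal vertex $v_n$. Since a special half-plane is in particular a $1$-ray half-plane, Lemma \ref{lemma:about_chains_weak} applies and tells us that $v_n$ is a $3$-valent type (I) vertex, while Lemma \ref{lemma:about_chains_weak_vorarbeit} furnishes an end $e$ of $\tilde{\Gamma}$ adjacent to $v_n$ whose direction is the unique standard direction $D$ contained in $H$.

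First I would pin down the local picture at $v_n$. Being $3$-valent with the end $e$ and the chain edge $e_{n-1}$ (which runs to the movable vertex $v_{n-1}$) accounting for two of its edges, the third edge $f$ must connect $v_n$ to the fixed component; as $v_n$ is movable it cannot itself carry a point condition, so $f$ is a genuine bounded edge to a fixed vertex $w$. I then record three sign facts relative to the primitive integral functional $\ell$ with $H=\{\ell>0\}$ and $\partial H=\langle e_0\rangle$ for the standard direction $e_0$: (i) the end $e$ has weight one (all non-contracted ends of a tropical stable map are primitive standard vectors, and passing from $\Gamma$ to $\tilde{\Gamma}$ glues only along weight-matched anti-parallel edges) and $\ell(D)=1$, which is the defining feature of a special half-plane (the three standard directions sum to zero, so the primitive functional vanishing on $e_0$ takes the value $+1$ on the standard direction lying in $H$); (ii) $v(e_{n-1},v_n)$ points strictly out of $\overline{H}_{v_n}$ by Lemma \ref{lemma:about_chains_weak_vorarbeit}, so $\ell(v(e_{n-1},v_n))\leq -1$ by integrality; and (iii) since $w$ is fixed, the direction of movement $b_n$ is forced to be parallel to $f$, and an unbounded movement makes $v_n$ recede from $w$, so with $b_n+v_n\in H_{v_n}$ the edge direction $v(f,v_n)$ (pointing toward $w$) satisfies $\ell(v(f,v_n))\leq -1$.

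With these in hand the contradiction is immediate from the balancing condition at $v_n$, namely $D+v(e_{n-1},v_n)+v(f,v_n)=0$: applying $\ell$ gives $1+\ell(v(e_{n-1},v_n))+\ell(v(f,v_n))=0$, whereas the two edge terms sum to at most $-2$, which is impossible. This closes the argument and shows no such chain can exist.

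I expect the main obstacle to be the bookkeeping in the second paragraph rather than the final computation. Specifically, one must argue carefully that $v_n$ is adjacent to the fixed component through a genuine bounded edge $f$ (ruling out that the fixed adjacency is a contracted end carrying a point condition, which would make $v_n$ immovable), that the direction of movement is forced to be parallel to $f$ and to point \emph{away} from $w$ under an unbounded movement, and that $e$ really is a weight-one end in direction $D$ after the passage from $\Gamma$ to $\tilde{\Gamma}$. The role of the \emph{special} hypothesis is isolated entirely in the equality $\ell(D)=1$; for a general $1$-ray half-plane one could only guarantee $\ell(D)\geq 1$, and then the balancing identity no longer yields a contradiction, which is exactly why the statement is restricted to special half-planes.
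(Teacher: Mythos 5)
Your proposal is correct and follows essentially the same route as the paper: reduce to a maximal chain, invoke Lemma \ref{lemma:about_chains_weak} and Lemma \ref{lemma:about_chains_weak_vorarbeit} to get a $3$-valent type (I) vertex $v_n$ with a weight-one end in direction $D$, note $b_n$ is parallel to the edge $f$ to the fixed component and points into $H_{v_n}$, and then contradict balancing at $v_n$. Your explicit computation with the primitive functional $\ell$ (and the observation that $\ell(D)=1$ is exactly where specialness enters) is just an unwinding of the paper's terser final sentence ``Using balancing and the definition of special half-planes\dots,'' so the two arguments coincide.
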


\begin{proof}
We use Notation \ref{notation:direction_of_movement}.
It is sufficient to show the statement for maximal chains $v_1\geq \dots \geq v_n$ starting at $v_1$. So we assume that our chain is maximal. The vertex $v_n$ is $3$-valent of type (I) by Lemma \ref{lemma:about_chains_weak}. Let $D$ denote the ray of standard direction that is contained in $H$. 
By Lemma \ref{lemma:about_chains_weak_vorarbeit}, there is an end $e$ adjacent to $v_n$ of standard direction $D$. Denote the edge that connects $v_n$ to a fixed component by $f$, and because $\langle f \rangle +v_n = \langle b_n \rangle +v_n$, we know that $f+v_n\nin\overline{H}_{v_n}$. Since all ends are of weight $1$, the end $e$ is also of weight $1$. Using balancing and the definition of special half-planes, we conclude that the edge $e_{n-1}$ that connects $v_{n-1}$ to $v_n$ lies in the boundary of $H_{v_n}$, which contradicts $v_{n-1}\geq v_n$.
\end{proof}

\begin{observation}\label{observation:half-planes}
Let $v_1,v_2$ be two vertices of the movable component of $\tilde{\Gamma}$. Let $e$ be an edge that connects $v_1$ and $v_2$ and let $b_1$ be the direction of movement of $v_1$. Corollary \ref{corollary:about_chains_weak} shows that there cannot be an open half-plane $H$ such that $b_1+v_1,e+v_1\in H_{v_1}$, and such that $H_{v_1}$ is a special half-plane. Note that $\langle b_1 \rangle \neq \langle e \rangle$, otherwise our movable component would move in a $2$-dimensional way. Therefore, for each pair of directions of $b_1$ and $e$, there are open half-planes that contain $b_1$ and $e$. But each of these open half-planes is not a special half-plane. This observation gives rise to the following classification.
\end{observation}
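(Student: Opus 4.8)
The plan is to derive Observation~\ref{observation:half-planes} directly from Corollary~\ref{corollary:about_chains_weak}, supplemented by one elementary fact about open half-planes in $\mathbb{R}^2$. The content splits into three claims, and essentially all of the work is in correctly matching the hypotheses of that corollary.

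First I would prove the central assertion that no special half-plane $H$ admits $b_1+v_1, e+v_1\in H_{v_1}$. Suppose, for contradiction, that such an $H$ exists. Because $e$ connects $v_1$ to the adjacent vertex $v_2$ and $H_{v_1}$ is an open half-plane with $v_1$ on its boundary, the condition $e+v_1\in H_{v_1}$ is equivalent to $v_2\in H_{v_1}$. By Definition~\ref{def:partial_order} this gives the relation $v_1\geq v_2$, i.e.\ a chain $v_1\geq v_2$ of length $n=2>1$ satisfying $b_1+v_1\in H_{v_1}$. Since $\tilde\Gamma$ allows an unbounded $1$-dimensional movement and $v_1$ is a vertex of its movable component, the hypotheses of Corollary~\ref{corollary:about_chains_weak} are met, and that corollary forbids exactly such a chain for special $H$ --- a contradiction.

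Next I would record that $\langle b_1\rangle\neq\langle e\rangle$. If $b_1$ were parallel to $e$, then, exactly as in the proof of Lemma~\ref{lemma:maximal_chains}, the directions of movement $b_1$ and $b_2$ of the adjacent vertices $v_1,v_2$ would be parallel; the two vertices could then be displaced by independent amounts along $e$, producing a $2$-dimensional family and contradicting the standing $1$-dimensionality assumption. Hence $b_1$ and $e$, viewed as vectors based at $v_1$, are linearly independent.

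Finally, linear independence guarantees that $b_1$ and $e$ lie in the interior of some common open half-plane whose boundary passes through $v_1$: choose the bounding line through $v_1$ so that both $b_1$ and $e$ point strictly to the same side, which is possible precisely because the two directions are not antiparallel. Thus open half-planes $H$ with $b_1+v_1, e+v_1\in H_{v_1}$ do exist, but by the first claim none of them is special. This is exactly the content of the observation and sets up the ensuing classification of the remaining (non-special) half-planes. I expect no serious obstacle: the only point demanding care is the equivalence $e+v_1\in H_{v_1}\iff v_1\geq v_2$, which converts the geometric hypothesis into the combinatorial chain condition needed to invoke Corollary~\ref{corollary:about_chains_weak}.
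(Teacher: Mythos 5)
Your proposal is correct and follows the same route as the paper's own (inline) justification: the key step is converting $e+v_1\in H_{v_1}$ into the chain relation $v_1\geq v_2$ via Definition \ref{def:partial_order} so that Corollary \ref{corollary:about_chains_weak} applies, together with the $2$-dimensional-movement argument for $\langle b_1\rangle\neq\langle e\rangle$ and the elementary fact that two linearly independent vectors lie in a common open half-plane. Your extra care in checking that positive scaling about $v_1$ makes $e+v_1\in H_{v_1}$ equivalent to $v_2\in H_{v_1}$ is exactly the detail the paper leaves implicit.
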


\begin{classification}[Dependence of $b_1$ and $e$]\label{classification:types_of_vertices_2}
Let $\tilde{\Gamma}$ be as in Construction \ref{constr:tilde_Gamma}. In particular, we assume that $\tilde{\Gamma}$ has more than one vertex. Use the notation of Observation \ref{observation:half-planes}, i.e. let $v_1\in\tilde{\Gamma}$ be a vertex with direction of movement $b_1$. If $b_1+v_1$ is in one of the dashed red cones in Figure \ref{Figure_7}, then $e+v_1$ has to lie in the opposite cone. Otherwise there would be a special half-plane $H$ such that $b_1+v_1,e+v_1\in H_{v_1}$, which contradicts Observation \ref{observation:half-planes}. We distinguish the $3$ cases depicted in Figure \ref{Figure_7}: If $b_1+v_1$ and $e+v_1$ lie in the red cones depicted on the left, then $v_1$ is said to be of type $F_1$. The other two cases can be seen in Figure \ref{Figure_7}.

\begin{figure}[H]
\centering
\def\svgwidth{400pt}
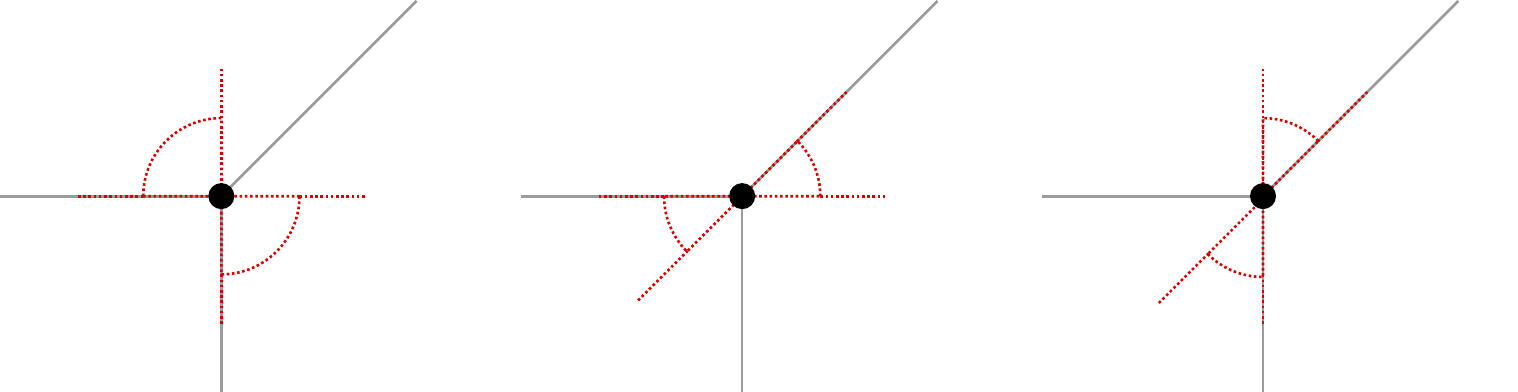
\caption{A vertex $v_1$ with its cones in which $b_1+v_1$ and $e+v_1$ can lie. From left to right: A vertex $v_1$ of type $F_1,F_2$ and $F_3$.}
\label{Figure_7}
\end{figure}

The other way round, given a vertex $v_1\in\tilde{\Gamma}$ and its type $F_i$, we can estimate the positions of $b_1+v_1$ and $e+v_1$. See Figure \ref{Figure_7} for the following: If $v_1$ is of type $F_i$, then $b_1+v_1$ and $e+v_1$ need to lie in the red cones depicted in Figure \ref{Figure_7} in such a way that $b_1+v_1$ and $e+v_1$ lie in opposite cones.
\end{classification}

\begin{remark}
If there is some maximal chain $v_1\geq \dots \geq v_n$ in $\tilde{\Gamma}$ with respect to $\Omega(H)$ such that $b_1+v_1\in H_{v_1}$ and $v_1$ is of type $F_i$, then $v_j$ is also of type $F_i$ for $j=2,\dots,n$.
\end{remark}

\begin{proof}
We use Notation \ref{notation:direction_of_movement}.
By induction, is is sufficient to show the statement for $v_1\geq v_2$. Let $e_1$ be the edge adjacent to $v_1,v_2$. Let $F_i$ be the type of $v_1$ such that $\sigma_{e_1}+v_1$ and $\sigma_{b_1}+v_1$ are its two opposing cones, where $e_1+v_1\in\sigma_{e_1}+v_1$ and $b_1+v_1\in\sigma_{b_1}+v_1$. Hence $-e_1+v_2\in\sigma_{b_1}+v_2$. By Observation \ref{observation:half-planes}, we obtain $b_2+v_2\in\sigma_{e_1}+v_2$.
\end{proof}

%

\begin{lemma}\label{lemma:about_chains_strong_vorarbeit}
We use Notation \ref{notation:direction_of_movement}.
Let $\tilde{\Gamma}$ be a tropical curve in $\mathbb{R}^2$ as in Construction \ref{constr:tilde_Gamma} that allows an unbounded $1$-dimensional movement. Let $v_1$ be a vertex of the movable component of $\tilde{\Gamma}$. Let $H$ be an open half-plane. Let $v_1\geq\dots\geq v_n$ be a maximal chain with respect to $\Omega(H)$ such that $n>1$ and $b_1+v_1\in H_{v_1}$. If $b_n$ is of non-standard direction, then $v_n$ is adjacent to two ends of $\tilde{\Gamma}$ of different standard directions. If $b_n$ is of standard direction, then $v_n$ is adjacent to one end of $\tilde{\Gamma}$ of standard direction parallel to $b_n$.
\end{lemma}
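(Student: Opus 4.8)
The plan is to analyse the local picture at the tip $v_n$, where by Lemma \ref{lemma:maximal_chains} the direction of movement satisfies $b_n+v_n\in H_{v_n}$ while no vertex adjacent to $v_n$ lies in $H_{v_n}$. First I would record that every bounded edge at $v_n$ points into the closed complement of $H_{v_n}$: for an edge to another movable vertex this is precisely the maximality statement of Lemma \ref{lemma:maximal_chains}, while for an edge $f$ joining $v_n$ to a fixed component one has $\langle f\rangle+v_n=\langle b_n\rangle+v_n$ and, since the movement is unbounded, $b_n$ pushes $v_n$ away from that component, so $f$ points opposite to $b_n$ and hence out of $H_{v_n}$. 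In particular the chain edge $e_{n-1}$ points strictly out. Letting $\nu$ denote the inner normal of $H_{v_n}$ and pairing the balancing condition at $v_n$ with $\nu$, the contracted ends drop out and all bounded edges contribute nonpositively; since $e_{n-1}$ contributes strictly negatively, the standard ends must contribute strictly positively. Thus $v_n$ carries at least one end pointing into $H_{v_n}$.

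Next I would eliminate the type (II) possibility. If $\val(v_n)>3$ then $\lambda_{v_n}\neq\emptyset$, and Corollary \ref{cor:CR_pfade_ueber_alle_edges_an_vertex} forces each edge at $v_n$ onto a path towards a contracted end occurring in a cross-ratio; tracing such a path along the end found above leads, through Construction \ref{constr:tilde_Gamma} and Remark \ref{remark:change_of_movement_direction}, to a glued type (IIIa)/(IIIb) vertex carrying a multi line whose direction of movement is parallel to that end. Unless this end is parallel to $b_n$, the movable component would then admit a two-dimensional movement, contradicting our hypothesis --- this is the mechanism already exploited in Lemma \ref{lemma:about_chains_weak}. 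Hence either $v_n$ is of type (I), or $v_n$ is of type (II), $b_n$ is the (standard) direction of its multi line, and the end is parallel to $b_n$, which is already the assertion in the standard case.

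It then remains to treat a type (I) tip, and here I would split according to $b_n$, organised by the three standard rays relative to $H_{v_n}$ (which, as their sum is zero, is a $1$-ray or $2$-ray half-plane). When $b_n$ is of standard direction I would show, using the balancing identity together with Corollary \ref{corollary:about_chains_weak} applied to the special half-plane whose boundary is $\langle b_n\rangle$, that the in-pointing end must be parallel to $b_n$: a standard end transverse to $b_n$ cannot be balanced against the outward edges and the fixed edge $f\parallel b_n$ without producing a special half-plane of the kind forbidden by Corollary \ref{corollary:about_chains_weak}. When $b_n$ is of non-standard direction it lies strictly inside the open sector spanned by two of the standard rays $D_a,D_b$; I would then rotate $H$ toward $b_n$ into two $1$-ray half-planes $H_a,H_b$ whose unique standard rays are $D_a$ and $D_b$, and apply Lemma \ref{lemma:about_chains_weak_vorarbeit} to a maximal chain ending at $v_n$ with respect to each, producing one end parallel to $D_a$ and one parallel to $D_b$, i.e.\ two ends of different standard directions.

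The step I expect to be the main obstacle is the last one: in order to invoke Lemma \ref{lemma:about_chains_weak_vorarbeit} I must guarantee that $v_n$ is still the tip of a \emph{maximal} chain with respect to each tilted half-plane $H_a,H_b$, and that the normalisation $b_1+v_1\in H_{v_1}$ survives the rotation. Verifying, via the Angle Lemma \ref{lemma:angle_lemma}, that the outward directions of all bounded edges at $v_n$ persist after tilting $H$ toward $b_n$ --- so that the relation $v_{n-1}\geq v_n$ and the maximality are preserved --- together with the precise sign bookkeeping in the balancing identity, is where the real work lies; once the correct auxiliary half-planes are in place, the conclusion is just the geometry of the three standard directions.
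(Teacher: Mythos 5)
Your first two steps are sound: the balancing-against-the-inner-normal argument producing an end that points into $H_{v_n}$ is exactly the mechanism of Lemma \ref{lemma:about_chains_weak_vorarbeit}, and type (II) tips are indeed harmless (the paper gets this for free from Remark \ref{remark:change_of_movement_direction}, since a vertex constrained by a multi line moves in a standard direction). The genuine gap is in your non-standard case, and it sits exactly where you located the ``real work'': the hypotheses of Lemma \ref{lemma:about_chains_weak_vorarbeit} cannot be restored after tilting, so it cannot be invoked as a black box. Concretely, by Observation \ref{observation:half-planes} applied at $v_n$, the direction $e_{n-1}$ lies in the closed cone (in the sense of Classification \ref{classification:types_of_vertices_2}) containing $b_n$; applying the same observation at $v_{n-1}$ to the pair $(b_{n-1},e_{n-1})$ then forces $b_{n-1}$, in the generic case where $e_{n-1}$ lies in the open cone, into the closed \emph{opposite} cone. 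But any admissible tilted half-plane $H_a$ or $H_b$ (containing $b_n$ and exactly one standard ray) has that opposite cone in its closed complement, so $b_{n-1}+v_{n-1}\notin (H_a)_{v_{n-1}}$ and $b_{n-1}+v_{n-1}\notin (H_b)_{v_{n-1}}$: the normalisation you propose to verify is provably false for both half-planes. In the remaining degenerate cases ($e_{n-1}$ parallel to a boundary ray of the cone, say to $D_a$) one instead loses the chain relation: a half-plane whose unique standard ray is $D_b$ cannot contain $e_{n-1}\parallel D_a$, so $v_{n-1}\geq v_n$ fails with respect to $\Omega(H_b)$ and $v_n$ is the bottom of no chain of length $>1$ at all. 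Either way, for at least one of your two half-planes the cited lemma does not apply, and one of the two asserted ends is never produced.

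The cure is to notice that the chain, its maximality and its normalisation enter the proof of Lemma \ref{lemma:about_chains_weak_vorarbeit} only to guarantee two facts at the tip: that $b_n+v_n\in H_{v_n}$ (via the Angle Lemma induction built into Definition \ref{def:partial_order}) and that all bounded edges at $v_n$ point out of $\overline{H}_{v_n}$. For your tilted half-planes the first fact holds by construction, and the second is exactly what Observation \ref{observation:half-planes}/Classification \ref{classification:types_of_vertices_2} deliver: every bounded edge at $v_n$ lies in the closed cone opposite to the one containing $b_n$ (edges to movable vertices by the classification, edges to fixed components because they are parallel to $-b_n$). This is precisely the paper's proof: it applies Classification \ref{classification:types_of_vertices_2} at $v_n$, places all bounded edges in the opposite cone(s), and reads both assertions directly off the balancing condition, with no auxiliary half-planes and no chains. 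So if you replace the invocation of Lemma \ref{lemma:about_chains_weak_vorarbeit} by this cone-plus-balancing argument, your proof closes --- and at that point it essentially coincides with the paper's. (A small additional slip: in your standard-direction case the relevant forbidden special half-planes are the two \emph{containing} $b_n$, bounded by the other two standard lines; a special half-plane with boundary $\langle b_n\rangle$ does not contain $b_n$ and yields no constraint.)
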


\begin{proof}
Assume that $v_n$ is of type $F_i$ for an $i=1,2,3$ and that $b_n$ is of non-standard direction. Thus, by Classification \ref{classification:types_of_vertices_2}, $b_n+v_n$ lies in the interior of one of the dashed red cones of Figure \ref{Figure_7} and all bounded edges adjacent to $v_n$ lie in the opposite cone.
Therefore, by the balancing condition, $v_n$ needs to be adjacent to at least two ends of different standard directions.

Next, assume that $b_n$ is of standard direction. Hence $b_n+v_n$ appears in the boundary of two of the red cones $\sigma_1,\sigma_2$ of Classification \ref{classification:types_of_vertices_2}. Therefore all edges which are no ends adjacent to $v_n\in\tilde{\Gamma}$ are in the union $\sigma_1'\cup \sigma_2'$ of the opposite cones $\sigma_j'$ of $\sigma_j$ for $j=1,2$. Therefore balancing guarantees that there is an end adjacent to $v_n\in\tilde{\Gamma}$ which is parallel to $b_n$.
\end{proof}

The following Lemma generalizes Lemma \ref{lemma:about_chains_weak} from $1$-ray half-planes to arbitrary half-planes.

\begin{lemma}[About maximal chains, strong version]\label{lemma:about_chains_strong}
Let $\tilde{\Gamma}$ be a tropical curve in $\mathbb{R}^2$ as in Construction \ref{constr:tilde_Gamma} that allows an unbounded $1$-dimensional movement. Let $v_1$ be a vertex of the movable component of $\tilde{\Gamma}$. If there is an open half-plane $H$ such that $v_1\geq\dots\geq v_n$ is a maximal chain starting at $v_1$ with respect to $\Omega(H)$ such that $n>1$ and $b_1+v_1\in H_{v_1}$, then $v_n$ is a $3$-valent type (I) vertex.
\end{lemma}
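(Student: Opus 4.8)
The plan is to follow the proof of the weak version, Lemma \ref{lemma:about_chains_weak}, almost verbatim; the only new input is that the existence of a suitable end at $v_n$ will now be supplied by the preparatory Lemma \ref{lemma:about_chains_strong_vorarbeit} rather than by the $1$-ray hypothesis. First I would reduce the assertion to showing $\val(v_n)=3$. Since the chain $v_1\geq\dots\geq v_n$ lives in the movable component of $\tilde\Gamma$, the vertex $v_n$ belongs to the set $M$ of Definition \ref{def:partial_order}, so by Classification \ref{classification:types_of_vertices} it is of type (I) or of type (II). As type (II) vertices are by definition not $3$-valent, every $3$-valent vertex of $M$ is automatically of type (I). Hence it is enough to exclude $\val(v_n)\geq 4$.

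So suppose $\val(v_n)\geq 4$ and use Notation \ref{notation:direction_of_movement}. Applying Lemma \ref{lemma:about_chains_strong_vorarbeit} to the maximal chain produces, no matter whether $b_n$ is of standard or of non-standard direction, at least one end $e$ of $\tilde\Gamma$ of standard direction that is adjacent to $v_n$. From here I would run the cross-ratio argument exactly as in Lemma \ref{lemma:about_chains_weak}: because $\val(v_n)>3$ there is a cross-ratio in $\lambda_{v_n}$, and since all cross-ratios involve only labels of contracted ends (Definition \ref{def:numbers_of_interest}), Corollary \ref{cor:CR_pfade_ueber_alle_edges_an_vertex} forces the edge of $v_n$ pointing towards $e$ to lie on a shortest path to a contracted cross-ratio end. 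That contracted end must branch off at a vertex $v$ sitting on $e$ in $\Gamma$ which was straightened away when passing to $\tilde\Gamma$ (Construction \ref{constr:tilde_Gamma}); hence $v$ is of type (IIIa) or (IIIb) and satisfies a multi line condition. As the movement is unbounded, the direction of movement of $v$ is parallel to $e$ (cf.\ Remark \ref{remark:change_of_movement_direction}), so sliding $v$ along its multi line while keeping $v_n$ and the remainder of the movable component in place is a deformation that is independent of the prescribed $1$-dimensional family. This yields a $2$-dimensional movement of the movable component, the desired contradiction. Therefore $\val(v_n)=3$ and, by the reduction above, $v_n$ is a $3$-valent type (I) vertex.

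The step where the generalization genuinely takes place, and the one I expect to be the only delicate point, is the production of the end $e$. In Lemma \ref{lemma:about_chains_weak} the $1$-ray hypothesis was used through Lemma \ref{lemma:about_chains_weak_vorarbeit} both to guarantee an end at $v_n$ and to pin its direction to the unique standard ray contained in $H$. The key observation is that this pinning of the direction of $e$ plays no role in the contradiction: the $2$-dimensional movement is generated solely by the sliding of the multi line vertex $v$ along the end $e$, and this is insensitive to how $e$ sits relative to $b_n$. In particular the case $b_n\parallel e$, which can occur for a general open half-plane, causes no difficulty and needs no separate treatment. Since Lemma \ref{lemma:about_chains_strong_vorarbeit} already hands us an end of standard direction at $v_n$ for an arbitrary open half-plane $H$, the rest of the weak-version argument carries over word for word, and this is exactly what upgrades the conclusion from $1$-ray to arbitrary half-planes.
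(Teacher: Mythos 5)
There is a genuine gap, and it sits exactly at the step you flag as ``the only delicate point.'' Your argument hinges on the assertion that, because the movement is unbounded, the direction of movement $b$ of the type (IIIa)/(IIIb) vertex $v$ sitting on $e$ is parallel to $e$. That is not a consequence of unboundedness together with Remark \ref{remark:change_of_movement_direction}: the remark only says that $b$ is one of the three standard directions. In the weak version the parallelism is genuinely derived from the $1$-ray hypothesis: since the edge connecting $v_n$ and $v$ cannot shrink during an unbounded movement (the Angle Lemma \ref{lemma:angle_lemma} argument), $b$ is forced into the cone spanned by $b_n$ and the direction $D$ of $e$, hence into $H_{v}$; because a $1$-ray half-plane contains a \emph{unique} standard ray, namely $D$, the standard direction $b$ must be parallel to $e$. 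For an arbitrary open half-plane this pinning is lost: $H$ may contain two standard rays, and $b$ can be the standard direction transverse to $e$. In that configuration your sliding deformation simply does not exist --- moving $v$ along $e$ takes $h(v)$ off its multi line, and moving $v$ along its multi line while fixing $v_n$ rotates the edge connecting $v_n$ and $v$, which leaves the combinatorial type --- so no $2$-dimensional movement, and no contradiction, is produced.

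This transverse configuration is precisely what the paper's proof spends its effort on, and it is why Lemma \ref{lemma:about_chains_strong_vorarbeit} records more than ``there is an end at $v_n$.'' If $b_n$ is of standard direction, the end supplied is \emph{parallel to $b_n$}, which forces the (IIIb) vertex on it to move parallel to that end, and the sliding contradiction applies; this is the only case your proof actually covers. If $b_n$ is of non-standard direction, the lemma instead supplies \emph{two} ends $E_1,E_2$ of different standard directions (information your proof discards); the type (IIIb) vertices reached through them must move in standard directions \emph{not} parallel to their respective ends (the parallel option being excluded exactly by the sliding argument), and the contradiction is then that the resulting movement of the whole component is \emph{bounded}, which the paper establishes by the case analysis over the cones of Classification \ref{classification:types_of_vertices_2}, see Figure \ref{Figure_8}. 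Your closing claim that the relative position of the relevant directions ``plays no role in the contradiction'' inverts the actual situation: the parallel case is the easy one, and the transverse case is the one that requires the new argument, which your proof never addresses.
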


\begin{proof}
We use Notation \ref{notation:direction_of_movement}, assume that $\val(v_n)>3$, that $v_n$ is of type $F_i$ for an $i=1,2,3$ and that $b_n$ is of non-standard direction. By Lemma \ref{lemma:about_chains_strong_vorarbeit}, $v_n$ needs to be adjacent to at least two ends $E_1,E_2$ of different standard directions.
By Corollary \ref{cor:CR_pfade_ueber_alle_edges_an_vertex}, we can reach a type (IIIb) vertex via each of the edges $E_1,E_2$ in $\Gamma$. The direction of movement of such a type (IIIb) vertex cannot be parallel to the end of standard direction it is connected to, otherwise we would have a $2$-dimensional movement. Recall that type (IIIb) vertices can only move in standard direction since their contracted ends satisfy multi line conditions.
See Figure \ref{Figure_8} for the following: If $i=1$, i.e. $v_n$ is of type $F_1$, we consider the cone in which $b_n+v_n$ lies and go through all different directions of movements of the type (IIIb) vertices. In each case we obtain a contradiction to your unbounded movement.

We still get a contradiction if $b_n+v_n$ would lie in the other red cone of Figure \ref{Figure_8}. More generally, the same arguments and conclusion of the case $i=1$ are true for $i=2,3$ and lead to contradictions as well.

\begin{figure}[H]
\centering
\def\svgwidth{440pt}
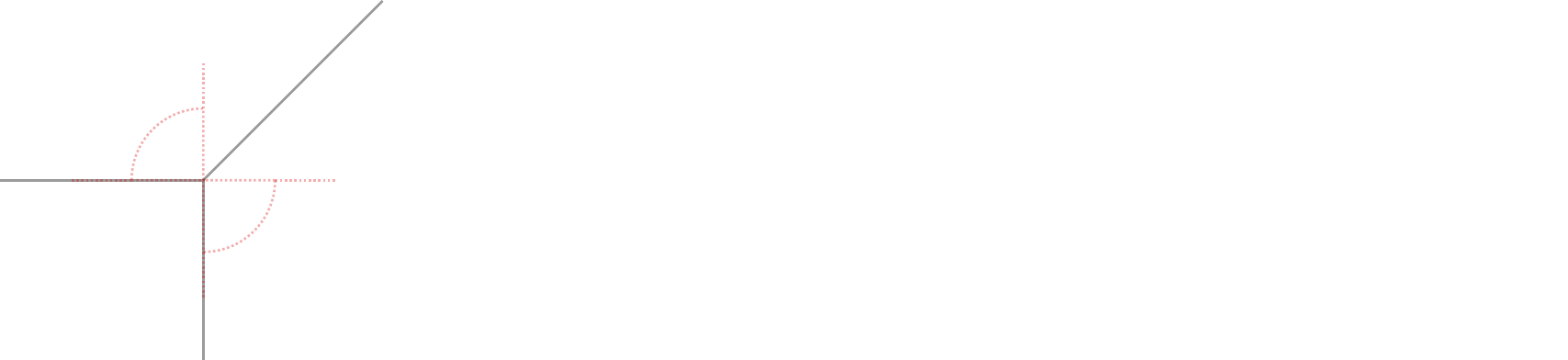
\caption{A vertex $v_n$ of type $F_1$ connected to two type (IIIb) vertices which move along the directions of the arrows.}
\label{Figure_8}
\end{figure}

Next, we assume that $b_n$ is of standard direction. By Lemma \ref{lemma:about_chains_strong_vorarbeit}, there is an end $E_1$ adjacent to $v_n\in\tilde{\Gamma}$ which is parallel to $b_n$. Since we assumed that $\val(v_n)>3$, there must, again,  be a type (IIIb) vertex adjacent to $v_n$ via $E_1$. Notice that this vertex can only move unboundedly in the direction of $b_n$, which is a contradiction because our movement is only $1$-dimensional.

In total, $v_n$ can only be a type (I) vertex that is $3$-valent.
\end{proof}

\begin{corollary}\label{corollary:no_chains_strong}
Let $v_1,b_1$ and $H$ be an open half-plane as in Lemma \ref{lemma:about_chains_strong}. Then there is no chain $v_1\geq \dots \geq v_n$ with $n>1$ and $b_1+v_1\in H_{v_1}$ in the movable component of $\tilde{\Gamma}$.
\end{corollary}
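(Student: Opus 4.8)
The plan is to argue by contradiction, mirroring the passage from Lemma \ref{lemma:about_chains_weak} to Corollary \ref{corollary:about_chains_weak}, but replacing the special–half-plane balancing argument (which is unavailable for a general $H$) by the full strength of Lemma \ref{lemma:about_chains_strong}. So suppose there \emph{were} a chain $v_1\geq\dots\geq v_n$ in the movable component of $\tilde{\Gamma}$ with $n>1$ and $b_1+v_1\in H_{v_1}$. Since $\tilde{\Gamma}$ is a finite tree and $\Omega(H)$ is a partial order on the finite vertex set $M$, we may extend this chain downward (appending elements below $v_n$) until it can no longer be extended, obtaining a maximal chain starting at $v_1$. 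This extension leaves $v_1$, $b_1$ and $H$ untouched and only increases the length, so the hypotheses $n>1$ and $b_1+v_1\in H_{v_1}$ persist. Thus it suffices to rule out maximal chains, and by Lemma \ref{lemma:about_chains_strong} the terminal vertex $v_n$ of such a maximal chain is a $3$-valent type (I) vertex.

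Next I would pin down the local picture at $v_n$. Being of type (I), $v_n$ is adjacent to a fixed component via some edge $f$; since the endpoint of $f$ in the fixed component does not move and the slope of $f$ is preserved throughout the family, $v_n$ can only slide along the line spanned by $f$, so that $\langle b_n\rangle=\langle f\rangle$ (and $b_n\neq 0$ by unboundedness). Together with the chain edge $e_{n-1}$ joining $v_n$ to the movable vertex $v_{n-1}$, this accounts for two bounded edges at the $3$-valent vertex $v_n$, leaving room for at most one further edge. Now I invoke Lemma \ref{lemma:about_chains_strong_vorarbeit}: were $b_n$ of non-standard direction, $v_n$ would have to be adjacent to two ends of different standard directions, which is impossible with only one free slot. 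Hence $b_n$ is of standard direction, and by the same lemma the remaining edge is an end $g$ with $\langle g\rangle=\langle b_n\rangle$.

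Finally I would close with balancing. Both $f$ and $g$ are parallel to $b_n$, i.e.\ they lie on lines of the common standard direction $\langle b_n\rangle$. The balancing condition at the $3$-valent vertex $v_n$ gives $v(e_{n-1},v_n)=-\left(v(f,v_n)+v(g,v_n)\right)$, which therefore also lies in $\langle b_n\rangle$. Hence all three edges adjacent to $v_n$, namely $e_{n-1}$, $f$ and $g$, are parallel. But this contradicts the defining property of a type (I) vertex, that not all of its adjacent edges and non-contracted ends are parallel. This contradiction shows that no chain $v_1\geq\dots\geq v_n$ with $n>1$ and $b_1+v_1\in H_{v_1}$ can exist.

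The main obstacle is the local analysis of the second paragraph: correctly identifying the three edges at $v_n$ as the chain edge, a fixed-component edge forcing $\langle b_n\rangle=\langle f\rangle$, and a single standard-direction end, and in particular using the $3$-valence to exclude the non-standard case of Lemma \ref{lemma:about_chains_strong_vorarbeit}. Once this picture is fixed, the balancing contradiction with type (I) is immediate.
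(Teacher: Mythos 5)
Your proof is correct, and it shares the paper's skeleton: reduce to a maximal chain, invoke Lemma \ref{lemma:about_chains_strong} to conclude that the terminal vertex $v_n$ is a $3$-valent type (I) vertex, and observe that the edge $f$ to the fixed component forces $\langle b_n\rangle=\langle f\rangle$ (the paper writes $b_n=-f$). Where you genuinely diverge is the endgame. The paper runs a case analysis over the cone types $F_1,F_2,F_3$ of Classification \ref{classification:types_of_vertices_2}: since $b_n=-f$, the edges $e_{n-1}$ and $f$ lie in a common cone, and for each possible standard-direction end $E$ one finds an open half-plane containing $E+v_n$, $f+v_n$ and $-e_{n-1}+v_n$, so that balancing cannot hold. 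You instead exploit the dichotomy of Lemma \ref{lemma:about_chains_strong_vorarbeit} together with $3$-valence: the non-standard case would require two ends at $v_n$, for which the two bounded edges $e_{n-1}$ and $f$ leave no room, hence $b_n$ is of standard direction, the third edge is an end parallel to $b_n$, and balancing then forces $e_{n-1}$ to be parallel as well, contradicting the non-parallelism built into the definition of a type (I) vertex. Both arguments rest on the same two lemmas; yours buys a cleaner contradiction (against the type (I) property rather than against balancing) and avoids Classification \ref{classification:types_of_vertices_2} and Figure \ref{Figure_7} altogether, while the paper's version stays uniform with the cone machinery it uses elsewhere. One small point worth making explicit in your argument: $v(e_{n-1},v_n)=-\bigl(v(f,v_n)+v(g,v_n)\bigr)$ could a priori vanish, but that would make $e_{n-1}$ a contracted bounded edge, which is excluded from the outset in Definition \ref{def:movable_component}.
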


\begin{proof}
We use Notation \ref{notation:direction_of_movement} and assume that there is a maximal chain $v_1\geq \dots \geq v_n$ starting at $v_1$. Hence $v_n$ must be a $3$-valent type (I) vertex by Lemma \ref{lemma:about_chains_strong}.
By Lemma \ref{lemma:about_chains_strong_vorarbeit}, there is an end $E$ of $\tilde{\Gamma}$ adjacent to $v_n$
Moreover, denote the direction vector at $v_n$ of the edge that connects $v_n$ to a fixed component by $f$. Therefore the direction of movement of $v_n$, denoted by $b_n$, is given by $-f$ since $v_n$ moves unboundedly, i.e. it moves away from the fixed component it is adjacent to.
We distinguish all cases of Classification \ref{classification:types_of_vertices_2} for $v_n$. So let the type of the vertex $v_n$ be $F_i$ for an $i=1,2,3$ (see Figure \ref{Figure_7}). Since $b_n=-f$, the edges $e_{n-1}$ and $f$ adjacent to $v_n$ lie in the same cone. Then there exists no end $E$ such that $v_n$ is balanced (for each possible end $E$ we find a half-plane $P$ such that $E+v_n,f+v_n,-e_{n-1}+v_n\in P_{v_n}$) which is a contradiction.
\end{proof}

The following proof builds on ideas of Proposition 5.1 in \cite{KontsevichPaper}.

\begin{proof}[Proof of Proposition \ref{prop:contracted_bounded_edge}]
Consider the $1$-dimensional cycle
\begin{align*}
Y=\prod_{k\in\underline{\kappa}}\ev_k^*(L_k)\cdot \prod_{i\in\underline{n}} \ev_i^*\left( p_i\right)\cdot \prod_{j=1}^{l-1} \ft_{\lambda_j}^*\left( 0\right) \cdot \mathcal{M}_{0,m}\left(\mathbb{R}^2,d\right)
\end{align*}
from Definition \ref{def:movable_component}. We need to show that $\lbrace \ft_{\lambda'_l}(C)\mid C\in Y \textrm{ has no contracted bounded edge}\rbrace$ is bounded in $\mathcal{M}_{0,4}$. If it is unbounded, then there is a curve $C$ coming from a stable map in $Y$ without a contracted bounded edge which allows an unbounded movement. Hence the movable component of $C$ has exactly one vertex $v$ by Corollary \ref{corollary:no_chains_strong} which is not of type (IIIa) or (IIIb) as in Classification \ref{classification:types_of_vertices}. Notice that $C$ has at least one fixed component as well since we assume that there is at least one point condition that $C$ satisfies.

\begin{figure}[H]
\centering
\def\svgwidth{440pt}
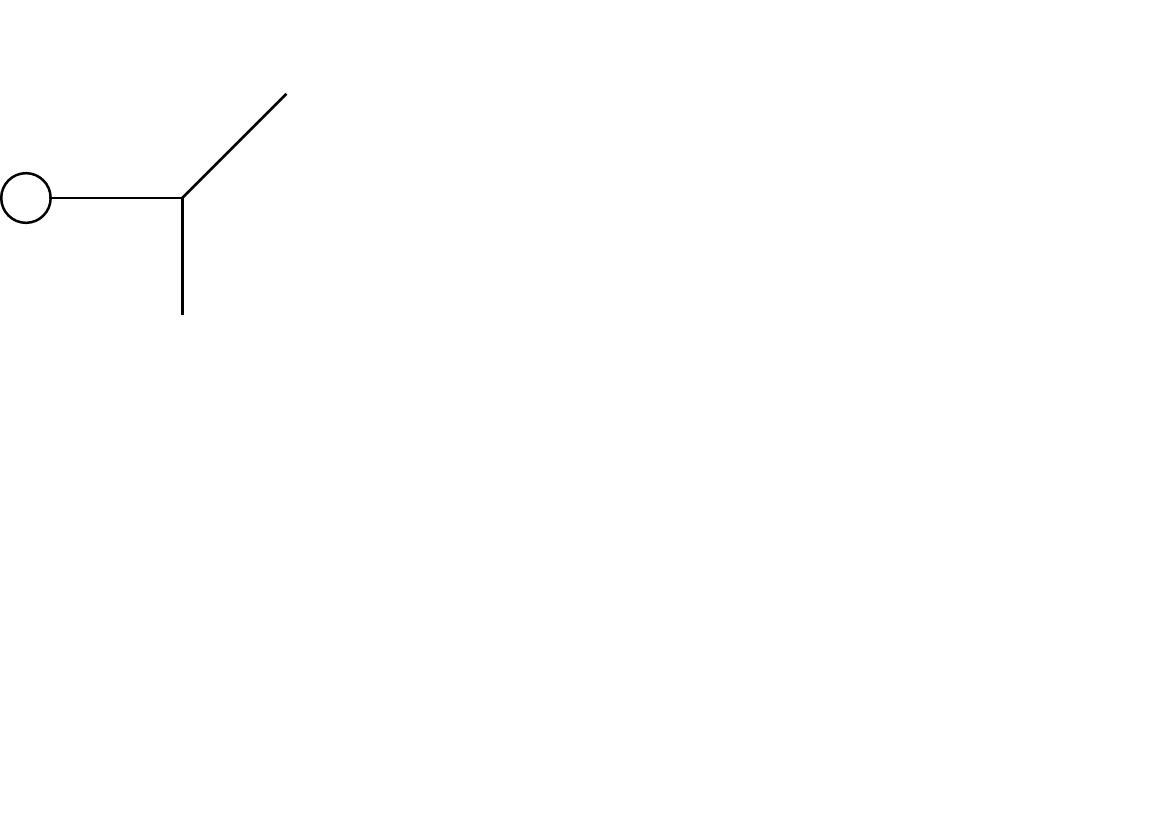
\caption{The movable vertex $v$ and its movement away from the fixed component.}
\label{Figure_18}
\end{figure}

We distinguish different cases for $v$.
\begin{itemize}
\item[(1)]
Assume that $\val(v)=3$ and that $v$ is adjacent to two edges $E_1,E_2$ which are parallel to two ends of different direction. The edges $E_1,E_2$ lead to other vertices in the movable component moving $v$ varies $\ft_{\lambda'_l}(C)$ and Corollary \ref{cor:CR_pfade_ueber_alle_edges_an_vertex} applies. There are $3$ cases (choose $2$ different directions for $E_1,E_2$ from the $3$ standard directions) we need to distinguish. Moving $v$ unboundedly, we obtain an end adjacent to $v$. More precisely, Figure \ref{Figure_18} shows one of the $3$ case where the directions are $(1,1)$ and $(0,-1)$ (the other two cases are analogous). Hence moving $v$ further in its direction of movement eventually produces a combinatorial type that does not allow $\ft_{\lambda'_l}(C)$ to become larger as $v$ is moved.
\item[(2)]
Assume that $\val(v)=3$ and that all edges adjacent to $v$ are parallel. Since all ends of $C$ are of weight $1$, the two edges $E_1,E_2$ adjacent to $v$, which lead to other vertices in the movable component, are on the same side of $v$. Therefore moving $v$ as before (analogous to Figure \ref{Figure_18} but with $v_1,v_2$ lying on parallel ends) does not make the coordinate $\ft_{\lambda'_l}(C)$ larger.
\item[(3)]
Assume that $\val(v)>3$, then there are edges $E_1,E_2$ adjacent to $v$ (by Corollary \ref{cor:CR_pfade_ueber_alle_edges_an_vertex}) which connect $v$ to vertices $v_1,v_2$ of the movable component that satisfy line conditions $L_{v_1},L_{v_2}$. The same movement as in the case of $\val(v)=3$ yields a combinatorial type where there is an end adjacent to $v$ which contradicts Corollary \ref{cor:CR_pfade_ueber_alle_edges_an_vertex} since $\val(v)>3$, see again Figure \ref{Figure_18}.
\end{itemize}
In total, choosing a large value for $|\lambda'_l|$ implies that only curves with a contracted bounded edge can contribute to $N_{d}\left(p_{\underline{n}},L_{\underline{\kappa}}, \lambda_{[l-1]},\lambda'_l \right)$. Moreover, there is exactly one contracted bounded edge. Otherwise a stable map $C$ contributing to $N_{d}\left(p_{\underline{n}},L_{\underline{\kappa}}, \lambda_{[l-1]},\lambda'_l \right)$ would give rise to a $1$-dimensional family of stable maps contributing to $N_{d}\left(p_{\underline{n}},L_{\underline{\kappa}}, \lambda_{[l-1]},\lambda'_l \right)$ which is a contradiction.
\end{proof}

Notice that in Proposition \ref{prop:contracted_bounded_edge} we assumed that $n\geq 1$, i.e. that there is at least one point condition. However, even without point conditions we can still assume that there is a contracted bounded edge, see Proposition \ref{prop:contracted_bounded_edge_no_point_conditions}.

\begin{lemma}\label{lemma:initial_values_1}
Let $C$ be a tropical stable map that contributes to $N_{d}\left(L_{\underline{\kappa}}, \lambda_{[l]} \right)$. Then there is a vertex $v$ of $C$ which is adjacent to two contracted ends $e_1,e_2$ such that $e_1$ satisfies a multi line condition $L_a$ and $e_2$ satisfies a multi line condition $L_b$, respectively.
\end{lemma}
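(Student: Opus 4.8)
The plan is to reduce the claim to the pigeonhole principle, after extracting the combinatorial skeleton of $C$ from a global vertex count. Since $C$ contributes to $N_{d}(L_{\underline\kappa},\lambda_{[l]})$, it lies in a top-dimensional cell of $\prod_{j}\ft_{\lambda_j}^*(0)\cdot\mathcal M_{0,m}(\mathbb R^2,d)$ by Definition \ref{def:general_pos}, and there are no point conditions, so $\#\underline n=0$ and $m=\#\underline\kappa+\#\underline f$. First I would record the combinatorial data of $C$: its underlying graph is a tree (it comes from a rational stable map), so writing $V$ for the number of internal vertices, $E_b$ for the number of bounded edges and $N=3d+m$ for the number of ends, the two standard tree identities $E_b=V-1$ and $\sum_v\val(v)=2E_b+N$ are available.

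The crucial input is the valence normalization coming from lying in a top-dimensional cell: every vertex $v$ of $C$ satisfies exactly the cross-ratios in $\lambda_v$, with $\val(v)=3+\#\lambda_v$, and by the path criterion (Remark \ref{remark:path_criterion}) each of the $l$ degenerated cross-ratios is satisfied at a \emph{unique} vertex. Consequently the sets $\lambda_v$ partition $\lambda_{[l]}$, giving $\sum_v\#\lambda_v=l$ and hence $\sum_v\val(v)=3V+l$. Combining this with the two tree identities yields $3V+l=2(V-1)+N$, i.e.\ $V=N-l-2=3d+m-l-2$.

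Next I would feed in the dimension bookkeeping. The general position equation \eqref{eq:gen_pos_condition_count} with $\#\underline n=l'=0$ and $\tilde l=l$ gives $\#\underline f=l-3d+1$, whence $3d+m-1=\#\underline\kappa+l$; substituting this into the formula for $V$ produces the clean identity $V=\#\underline\kappa-1$. Now the pigeonhole step finishes the argument: there are exactly $\#\underline\kappa$ contracted ends satisfying multi line conditions, one for each $k\in\underline\kappa$, and each is adjacent to a single internal vertex of $C$; but only $\#\underline\kappa-1$ vertices are available, so some vertex $v$ must be adjacent to two such ends $e_1,e_2$. As distinct ends carry distinct labels in $\underline\kappa$, they satisfy two distinct multi line conditions $L_a,L_b$, which is exactly the assertion.

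I expect the only delicate point to be justifying the normalization $\val(v)=3+\#\lambda_v$ at every vertex together with the fact that the $\lambda_v$ partition $\lambda_{[l]}$; everything else is forced bookkeeping. This is precisely what the ``top-dimensional cell'' clause in Definition \ref{def:general_pos} buys us: a curve carrying more excess valence at some vertex than its attached cross-ratios demand, or a single cross-ratio satisfied simultaneously at two vertices, would land in a strictly lower-dimensional cell and thus could not contribute. Once this normalization is granted, the computation $V=\#\underline\kappa-1$ and the pigeonhole conclusion require no further work.
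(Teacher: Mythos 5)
Your proof is correct, but it takes a genuinely different route from the paper's. The paper argues by contradiction via a rigidity/deformation argument: if every vertex of $C$ were adjacent to at most one contracted end satisfying a multi line condition, then (since there are no point conditions) every vertex would admit a movement, so $C$ would give rise to a positive-dimensional family of stable maps satisfying the same conditions, contradicting the fact that $C$ is an isolated point of a zero-dimensional intersection product. You instead extract an exact global count: the two tree identities, the valence bookkeeping forced by lying in a top-dimensional cell, and the general position equation \eqref{eq:gen_pos_condition_count} combine to give $V=\#\underline{\kappa}-1$, after which the pigeonhole principle finishes. Your route yields strictly more information --- the identity $V=\#\underline{\kappa}-1$ rather than the mere existence of one doubly-occupied vertex, and it is constructive rather than by contradiction --- at the price of leaning on the combinatorial structure of the cells of $\prod_j\ft_{\lambda_j}^*(0)\cdot\mathcal{M}_{0,m}\left(\mathbb{R}^2,d\right)$; the paper's argument is shorter and purely local, though as written it is terse about why the individual vertex movements assemble into a one-dimensional family.

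One caveat on the step you yourself flag as delicate: the pointwise normalization $\val(v)=3+\#\lambda_v$ does \emph{not} follow from dimension counting alone. A vertex $v$ with $\val(v)-3<\#\lambda_v$ can be compensated by another vertex $w$ with $\val(w)-3>\#\lambda_w$ (for instance a $4$-valent vertex hosting two cross-ratios whose differing entries lie behind a common adjacent edge, together with a $4$-valent vertex hosting none), keeping the total codimension equal to $l$; such a combinatorial type satisfies all cross-ratios and spans a cell of dimension exactly $3d+m-1-l$, so it does not ``land in a strictly lower-dimensional cell''. What excludes it is that its weight in the intersection product vanishes (the cross-ratio divisors attached to $v$ all factor through the local resolution fan of $v$, whose dimension is too small), which is the content of the structure results of \cite{CR1}, not a dimension drop. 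Fortunately your argument never needs the pointwise statement: all you use is the summed identity $\sum_v(\val(v)-3)=l$, which is precisely the assertion that the cell of $\mathcal{M}_{0,m}\left(\mathbb{R}^2,d\right)$ containing $C$ in its relative interior has dimension $3d+m-1-l$, and this the top-dimensional-cell clause of Definition \ref{def:general_pos} does deliver directly. With that step rephrased, the proof stands.
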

\begin{proof}
Assume that each vertex of $C$ is at most adjacent to one contracted end that satisfies a multi line condition. Hence each vertex of the tropical curve associated to $C$ allows a $1$-dimensional movement since its movement is only restricted by at most one multi line condition (we have no point conditions). Thus $C$ give rise to a $1$-dimensional family which is a contradiction.
\end{proof}

\begin{lemma}\label{lemma:initial_values_2}
Let $v$ be the vertex adjacent to $e_1,e_2$ from Lemma \ref{lemma:initial_values_1}. Then $\val(v)>3$ and there is a degenerated cross-ratio $\lambda\in\lambda_{[l]}$ such that $\lambda=\lbrace e_1, e_2, \beta_3,\beta_4\rbrace $.
\end{lemma}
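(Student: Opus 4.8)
The plan is to establish both assertions by producing, whenever they fail, a nonconstant one-parameter family of stable maps satisfying all the imposed conditions, contradicting the zero-dimensionality guaranteed by general position (Definition \ref{def:general_pos}). First I would show $\val(v)>3$. Suppose $\val(v)=3$. The contracted ends $e_1,e_2$ have direction vector $0$, so the balancing condition at $v$ forces the third edge $E$ at $v$ to be contracted as well; since $\degree(C)=d\geq 1$ the curve is connected with non-contracted ends, so $E$ is a bounded edge joining $v$ to the rest of $C$. Varying the length of $E$ over $(0,\infty)$ moves no image point (as $E$ is contracted), so each condition $\ev_k^*(L_k)$ is preserved. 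Moreover, every degenerate cross-ratio $\lambda_j$ is satisfied at a single vertex, where its forgetful image is the origin of $\mathcal{M}_{0,4}$, i.e. the internal edge it detects has length $0$; the positive-length edge $E$ is therefore never this internal edge, so each $\ft_{\lambda_j}(C)=0$ persists. This gives a one-parameter family inside $Z_{d}\left(L_{\underline{\kappa}},\lambda_{[l]}\right)$, contradicting zero-dimensionality, so $\val(v)>3$.

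For the second assertion, $\val(v)>3$ gives $\lambda_v\neq\emptyset$. By Corollary \ref{cor:CR_pfade_ueber_alle_edges_an_vertex} every edge at $v$ lies on a shortest path from $v$ to an entry of some cross-ratio of $\lambda_v$; since $e_1$ and $e_2$ are ends, whose shortest paths from $v$ are the ends themselves, they must each be an entry of some cross-ratio $\lambda_{j_1},\lambda_{j_2}\in\lambda_v$. It then suffices to show one may take $\lambda_{j_1}=\lambda_{j_2}$, for then this cross-ratio has the claimed form $\{e_1,e_2,\beta_3,\beta_4\}$.

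I would prove this by the same rigidity principle. Assuming no cross-ratio of $\lambda_v$ contains both $e_1$ and $e_2$, the goal is to split $v$ into vertices $v'$ (retaining $e_1$) and $v''$ (retaining $e_2$) joined by a new bounded edge $g$, distributing the other edges at $v$ so that each cross-ratio of $\lambda_v$ keeps all four of its entries on one side of $g$. Then $g$ is detected by no $\ft_{\lambda_j}$, so every degenerate cross-ratio survives at $v'$ or $v''$, while the line conditions let me slide $h(v')$ along $L_a$ and $h(v'')$ along $L_b$ as $\ell(g)$ increases from $0$. The result is again a nonconstant one-parameter family in $Z_{d}\left(L_{\underline{\kappa}},\lambda_{[l]}\right)$, contradicting zero-dimensionality and forcing $e_1,e_2$ into a common cross-ratio.

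The main obstacle is the combinatorial core of this last step: verifying that the absence of a common cross-ratio genuinely permits such a separating split. One must rule out that the cross-ratios of $\lambda_v$ chain $e_1$ to $e_2$ through shared entries or nested resolutions, which could obstruct the split even when no single cross-ratio contains both. I expect this to require the full general-position structure of $v$ --- namely $\val(v)=3+\#\lambda_v$ together with the existence of a total resolution (Definition \ref{def:CR_mult}) --- to show that any such obstruction would itself open a positive-dimensional family or contradict the top-dimensionality of the cell containing $C$. This is where the bulk of the work lies.
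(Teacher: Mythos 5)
Your first assertion is proved essentially as in the paper: with $\val(v)=3$, balancing forces the third edge at $v$ to be contracted and bounded, and varying its length gives a one-parameter family preserving all evaluation and degenerate cross-ratio conditions, contradicting rigidity. Likewise, your use of Corollary \ref{cor:CR_pfade_ueber_alle_edges_an_vertex} to see that $e_1$ and $e_2$ each occur as entries of cross-ratios in $\lambda_v$ matches the paper. The gap is in the remaining --- and main --- step, which you have correctly located but not closed.

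Your plan is to separate $e_1$ and $e_2$ onto two new vertices $v',v''$ joined by a new edge $g$, distributing all other edges at $v$ so that every cross-ratio of $\lambda_v$ keeps its four entries on one side of $g$. Such a distribution simply need not exist when no cross-ratio contains both $e_1$ and $e_2$: take $\lambda_v=\lbrace\lambda_1,\lambda_2\rbrace$ with $\lambda_1=\lbrace e_1,a,b,c\rbrace$ and $\lambda_2=\lbrace e_2,a,b,c\rbrace$, where the paths to $a,b,c$ leave $v$ through three distinct edges (so $\val(v)=5=3+\#\lambda_v$). Then $\lambda_1$ forces those three edges onto the $e_1$-side while $\lambda_2$ forces them onto the $e_2$-side, so no separating split exists; this is exactly the ``chaining'' obstruction you flag and defer as ``the bulk of the work.'' In this obstructed situation your deformation is unavailable in principle, so no refinement of the combinatorial analysis of your split can rescue the argument; a different family must be exhibited. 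The paper produces it with a much simpler resolution: keep all other edges at $v$, and pull only $e_1$ and $e_2$ together onto a single new $3$-valent vertex $v'$ joined to $v$ by a \emph{contracted} bounded edge $e$. Because no cross-ratio of $\lambda_v$ contains both $e_1$ and $e_2$, the edge $e$ never becomes the internal edge of any forgetful image $\ft_{\lambda_j}$ (after forgetting, $v'$ becomes $2$-valent and $e$ is absorbed into the end $e_1$ or $e_2$), so every cross-ratio remains satisfied at $v$, while the evaluation conditions are untouched since $e$ is contracted; varying the length of $e$ then gives the one-dimensional family and the contradiction, with no distribution problem to solve. Incidentally, your family needs care even when your split does exist: $g$ is generally non-contracted, its direction is dictated by balancing, and it is not automatic that the sliding you describe keeps $e_1,e_2$ on $L_a,L_b$; the paper's contracted edge avoids this issue as well.
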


\begin{proof}
We use the notation from Lemma \ref{lemma:initial_values_1}. If $\val(v)=3$, then, by Lemma \ref{lemma:initial_values_1}, there is a contracted bounded edge adjacent to $v$. Hence $C$ cannot be fixed by the set of given conditions which is a contradiction. Thus $\val(v)>3$.

By Corollary \ref{cor:CR_pfade_ueber_alle_edges_an_vertex} there is a cross-ratio $\lambda$ as desired or there are cross-ratios $\lambda_1=\lbrace e_1,\dots \rbrace$ and $\lambda_2=\lbrace e_2,\dots \rbrace$ such that $e_2\notin \lambda_1$ and $e_1\notin \lambda_2$. Assume that there is no cross-ratio $\lambda$ as desired. Then $v$ can be resolved by adding a contracted bounded edge $e$ to $C$ that is adjacent to $v$ and a new $3$-valent vertex $v'$ which is adjacent to $e_1,e_2$. Notice that this resolution of $v$ is compatible with $\lambda_1,\lambda_2$ but gives rise to a $1$-dimensional family of tropical stable maps satisfying $L_{\underline{\kappa}}, \lambda_{[l]}$ which is a contradiction.
\end{proof}

\begin{proposition}\label{prop:contracted_bounded_edge_no_point_conditions}
We use notation from Lemma \ref{lemma:initial_values_1} and Lemma \ref{lemma:initial_values_2} and assume without loss of generality that $e_1,e_2$ are entries of the cross-ratio $\lambda_l$. Let $\lambda'_l$ be a non-degenerated cross-ratio that degenerates to $\lambda_l$, where $e_1,e_2$ are grouped together. Then every tropical stable map $C'$ that contributes to $N_{d}\left(L_{\underline{\kappa}}, \lambda_{[l-1]},\lambda_l' \right)$ arises from a tropical stable map $C$ that contributes to $N_{d}\left(L_{\underline{\kappa}}, \lambda_{[l]} \right)$ by adding a contracted bounded edge $e$ to $C$ that is adjacent to $v$ and a new vertex $v'$ which is in turn adjacent to $e_1,e_2$.
\end{proposition}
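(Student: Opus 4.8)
The plan is to realise the passage from the fully degenerate count $N_{d}(L_{\underline{\kappa}},\lambda_{[l]})$ to the count $N_{d}(L_{\underline{\kappa}},\lambda_{[l-1]},\lambda'_l)$ as a single \emph{resolution} in the sense of Construction \ref{constr:resolution_of_vertex}, carried out at the distinguished vertex $v$ supplied by Lemmas \ref{lemma:initial_values_1} and \ref{lemma:initial_values_2}. Recall that those lemmas produce, for every curve $C$ contributing to $N_{d}(L_{\underline{\kappa}},\lambda_{[l]})$, a vertex $v$ with $\val(v)>3$ that is adjacent to the two contracted ends $e_1,e_2$ (each carrying a multi line condition) and at which the degenerated cross-ratio $\lambda_l=\lbrace e_1,e_2,\beta_3,\beta_4\rbrace$ is satisfied. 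I would first treat the easy ``resolution'' direction: resolving $v$ according to $\lambda'_l=(e_1e_2|\beta_3\beta_4)$ inserts a new vertex $v'$ adjacent to $e_1,e_2$ together with a single new edge $e$ joining $v'$ to $v$, and I would check that the resulting $C'$ still satisfies $L_{\underline{\kappa}}$ and $\lambda_{[l-1]}$ while now satisfying the non-degenerated $\lambda'_l$.

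The decisive local observation is that this new edge is forced to be contracted. Indeed, by construction $v'$ is adjacent only to $e_1,e_2$ and $e$, so it is $3$-valent; the balancing condition of Definition \ref{definition:moduli_stable_maps}(c) at $v'$ reads $v(e,v')=-v(e_1)-v(e_2)=0$ because $e_1,e_2$ are contracted ends. Hence $e$ is a contracted bounded edge, whose length equals the prescribed $|\lambda'_l|$. Conversely, starting from an arbitrary $C'$ contributing to $N_{d}(L_{\underline{\kappa}},\lambda_{[l-1]},\lambda'_l)$, the path criterion (Remark \ref{remark:path_criterion}) applied to the non-degenerated $\lambda'_l$ with $e_1,e_2$ grouped together produces a bounded edge $e$ of positive length separating $\lbrace e_1,e_2\rbrace$ from $\lbrace\beta_3,\beta_4\rbrace$; the same balancing computation shows $e$ is contracted, and contracting it (equivalently letting $|\lambda'_l|\to 0$ as in Remark \ref{remark:numbers_independent_of_positions_and_lengths}) merges its two endpoints into a single vertex $v$ at which $\lambda_l$ is now satisfied, yielding a curve $C$ contributing to $N_{d}(L_{\underline{\kappa}},\lambda_{[l]})$.

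The step I expect to be the main obstacle is verifying, for the converse direction, that the endpoint $v'$ of $e$ on the $\lbrace e_1,e_2\rbrace$-side is genuinely $3$-valent and adjacent to \emph{both} $e_1$ and $e_2$, so that the decomposition has exactly the claimed shape. Here I would re-run the rigidity argument of Lemma \ref{lemma:initial_values_1}: since there are no point conditions, a vertex adjacent to at most one multi-line-constrained contracted end retains a one-parameter freedom, so isolatedness of $C'$ (general position, Definition \ref{def:general_pos}) forces $e_1$ and $e_2$ to accumulate at the single vertex $v'$. To exclude any further edge at $v'$, I would invoke Corollary \ref{cor:CR_pfade_ueber_alle_edges_an_vertex} together with Lemma \ref{lemma:initial_values_2}: an extra adjacent edge would either create a second multi-line-constrained neighbour and hence a two-dimensional movement, or force an additional degenerated cross-ratio at $v'$, both of which are incompatible with $C'$ being rigid and with $v'$ being the $\lbrace e_1,e_2\rbrace$-endpoint of the separating edge. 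Once $v'$ is pinned down as $3$-valent, the correspondence $C\leftrightarrow C'$ is bijective and the proposition follows.
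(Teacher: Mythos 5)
Your first (``resolution'') direction is essentially fine, and the balancing observation --- once $v'$ carries only $e_1,e_2$ and $e$, balancing forces $v(e,v')=-v(e_1)-v(e_2)=0$ --- is a correct and pleasant shortcut. Note, though, that the paper proves a stronger local statement: \emph{every} resolution of $v$ according to $\lambda'_l$, including resolutions in which the new vertex inherits further edges of $v$ (for which balancing alone decides nothing), produces a contracted edge. It does so by a dimension argument: a non-contracted resolving edge would place the resolved curve in the boundary of a $2$-dimensional cell of the $1$-dimensional cycle obtained by dropping $\lambda'_l$, a contradiction.

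The genuine gap is in your converse direction, which is the actual content of the proposition. First, satisfying the non-degenerated $\lambda'_l$ only gives a \emph{path} of total length $|\lambda'_l|$ separating $\lbrace e_1,e_2\rbrace$ from $\lbrace\beta_3,\beta_4\rbrace$; that this path is a single edge is part of what must be proved, not an output of Remark \ref{remark:path_criterion}. Second, your ``the same balancing computation shows $e$ is contracted'' presupposes exactly the $3$-valence and adjacency statement that you defer to the last paragraph, so the argument is circular as ordered. Third, the proposed repair does not close the hole: re-running Lemma \ref{lemma:initial_values_1} on $C'$ yields \emph{some} vertex adjacent to two multi-line-constrained contracted ends, with no control that these ends are $e_1,e_2$ or that this vertex is an endpoint of the separating path; and the proof of Lemma \ref{lemma:initial_values_2} does not transfer to $C'$, because its key step --- a contracted bounded edge contradicts rigidity --- fails precisely in the present situation, where $\lambda'_l$ can (and does) fix the length of a contracted bounded edge. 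Likewise, in your final dichotomy the horn ``an extra edge forces an additional degenerated cross-ratio at $v'$'' is not a contradiction: rigid contributing curves have vertices of valence $3+\#\lambda_v$ carrying degenerate cross-ratios as a matter of course, so nothing is excluded. The paper settles the converse by a global counting argument that you never invoke: $N_{d}\left(L_{\underline{\kappa}},\lambda_{[l]}\right)=N_{d}\left(L_{\underline{\kappa}},\lambda_{[l-1]},\lambda'_l\right)$ by Remark \ref{remark:numbers_independent_of_positions_and_lengths}, the multiplicity of each degenerate $C$ is computed by its total resolutions (Proposition \ref{prop:cr_mult}), each resolution yields a contributor to the non-degenerate count, and all multiplicities are positive --- so there is no room for any stable map $C'$ that does not arise from a resolution. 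Either adopt that counting argument or supply a substantially more detailed direct structural analysis of $C'$.
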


\begin{proof}
Let $C$ be a tropical stable map that contributes to $N_{d}\left(L_{\underline{\kappa}}, \lambda_{[l]} \right)$ and let $v$ the vertex from Lemma \ref{lemma:initial_values_1} at which $\lambda_l$ is satisfied. Assume that the edge $e'$ we add by resolving $v$ according to $\lambda_l'$ is not contracted and denote the tropical stable map obtained this way ba $C''$. Denote the vertex adjacent to $e'$ and $e_1,e_2$ by $\tilde{v}$.
Consider $C''$ as a point in the cycle that arises from dropping the cross-ratio condition $\lambda_l'$ (cf. Definition \ref{def:movable_component}). Then $C''$ is in the boundary of a $2$-dimensional cell of the same cycle that arises from $C''$ by adding a contracted bounded edge $e$ to $C''$ that separates $\tilde{v}$ from $e_1,e_2$. Hence there is a $2$-dimensional cell inside a $1$-dimensional cycle, which is a contradiction. 

Each tropical stable map $C$ contributes to $N_{d}\left(L_{\underline{\kappa}}, \lambda_{[l]} \right)$ yields a contribution to $N_{d}\left(L_{\underline{\kappa}}, \lambda_{[l-1]},\lambda_l' \right)$ if the vertex $v$ at which $\lambda_l$ is satisfied is resolved according to $\lambda_l'$ and each resolution of $v$ according to $\lambda_l'$ produces a contracted bounded edge $e$. Hence Remark \ref{remark:numbers_independent_of_positions_and_lengths} and the description of $\mult(C)$ via resolutions of vertices (see also \cite{CR1}) guarantees that there cannot be more stable maps $C'$ contributing to $N_{d}\left(L_{\underline{\kappa}}, \lambda_{[l-1]},\lambda_l' \right)$ than the ones obtained from adding a contracted bounded edge $e$ to tropical stable maps $C$.
\end{proof}

\subsection*{Behavior of cut contracted bounded edges} 
After we identified a contracted bounded edge $e$ in Propositions \ref{prop:contracted_bounded_edge}, \ref{prop:contracted_bounded_edge_no_point_conditions}, we can cut this edge which yields a split of the original tropical stable map into two new ones.
The aim of this subsection is to prove Corollary \ref{corollary:1/1-edge_cut_standard_direction}, in which the behavior of the two new ends that arise from cutting $e$ is described.

\begin{construction}[Cutting the contracted bounded edge]\label{constr:cutting_contracted_bounded_edge}
Let $C$ be a tropical stable map that contributes to $N_{d}\left(p_{\underline{n}},L_{\underline{\kappa}}, \lambda_{[l-1]},\lambda'_l \right)$, where $\lambda'_l$ is a non-degenerated tropical cross-ratio such that $|\lambda'_l|$ is large. Assume that $C$ has a contracted bounded edge $e$.

If we cut $e$, we obtain two tropical stabel maps $C_1$ and $C_2$ with contracted ends $e_1$ and $e_2$ that come from $e$. By abuse of notation, the label of $e_i$ is also $e_i$ for $i=1,2$. We usually denote the vertices adjacent to the ends $e_1,e_2$ by $v_1,v_2$.
Notice that $C_i$ is of degree $d_i$ for $i=1,2$ such that $d_1+d_2=d$ since $C$ is balanced and of degree $d$.

If a contracted bounded edge $e$ is cut, the cross-ratios can be \textit{adapted} the following way: If $\lambda_j$ is a degenerated cross-ratio that is satisfied at some vertex $v\in C_i$ for $i=1,2$, then, by the path criterion (Remark \ref{remark:path_criterion}), either all entries of $\lambda_j$ are labels of contracted ends of $C_i$ or $3$ entries of $\lambda_j$ are labels of contracted ends of $C_i$ and one entry $\beta$ is a label of a contracted end of $C_t$ for $t\neq i$. In the first case, we do not change $\lambda_j$ and in the latter case, we replace the entry $\beta$ of $\lambda_j$ by $e_i$. We denote a degenerated cross-ratio that we adapted to $e_i$ by $\lambda_j^{\to e_i}$.

Each $C_i$ of degree $d_i$ for $i=1,2$ satisfies point conditions $p_{\underline{n_i}}$, multi line conditions $L_{\underline{\kappa_i}}$ and cross-ratio conditions $\lambda^{\to e_i}_{\underline{l_i}}$ such that $\underline{n_1}\cupdot\underline{n_2}=\underline{n}$, $\underline{\kappa_1}\cupdot\underline{\kappa_2}= \underline{\kappa}$ and $\underline{l_1}\cupdot\underline{l_2}=[l-1]$, where we adapted all cross-ratios to the cut edge $e$. We say that $C$ \textit{splits} into the two tropical stable maps $C_1$ and $C_2$ and the \textit{splitting type} of $C$ is $(d_1,\underline{n_1},\underline{\kappa_1},\underline{l_1},\underline{f_1}\mid d_2,\underline{n_2},\underline{\kappa_2},\underline{l_2},\underline{f_2})$, where $\underline{f_1}\cupdot \underline{f_2}=\underline{f}$ is a partition of the ends of $C$ that satisfy no point or multi line condition as in Definition \ref{def:general_pos}.
\end{construction}

\begin{definition}[$1/1$ and $2/0$ splits]\label{def:splits}
Let $d$ be a degree, let $p_{\underline{n}},L_{\underline{\kappa}}, \lambda_{[l-1]}$ be given conditions and let $\underline{f}$ be labels of contracted ends that satisfy no conditions as in Definition \ref{def:cycle_Z_d}.
We refer to $(d_1,\underline{n_1},\underline{\kappa_1},\underline{l_1},\underline{f_1}\mid d_2,\underline{n_2},\underline{\kappa_2},\underline{l_2},\underline{f_2})$ as a \textit{split} (of conditions) if $d_1+d_2=d$, $\underline{n_1}\cupdot\underline{n_2}=\underline{n}$, $\underline{\kappa_1}\cupdot\underline{\kappa_2}= \underline{\kappa}$, $\underline{l_1}\cupdot\underline{l_2}=[l-1]$, $\underline{f_1}\cupdot \underline{f_2}=\underline{f}$ holds and each cross-ratio in $\lambda_{\underline{l_i}}$ has at least $3$ of its entries in $\underline{n_i}\cup\underline{\kappa_i}\cup\underline{f_i}$. If we write $\lambda^{\to e_i}_{\underline{l_i}}$, we mean that each entry of each cross-ratio in $\lambda_{\underline{l_i}}$ that is not in $\underline{n_i}\cup\underline{\kappa_i}\cup\underline{f_i}$ is replaced by the label $e_i$.
Such a split is called a $1/1$ \textit{split} if
\begin{align}\label{eq:equation_for_1/1_split}
3d_i=\#\underline{n_i}+\#\underline{l_i}-\#\underline{f_i}+1
\end{align}
holds for $i=1,2$. If
\begin{align}\label{eq:equation_for_2/0_split}
3d_i=\#\underline{n_i}+\#\underline{l_i}-\#\underline{f_i} \textrm{ and } 3d_t=\#\underline{n_t}+\#\underline{l_t}-\#\underline{f_t}+2
\end{align}
holds for $i=1,2$ with $t\neq i$ for some choice of $i,t\in\lbrace 1,2 \rbrace$, then we refer to $(d_1,\underline{n_1},\underline{\kappa_1},\underline{l_1},\underline{f_1}\mid d_2,\underline{n_2},\underline{\kappa_2},\underline{l_2},\underline{f_2})$ as a $2/0$ \textit{split}.
\end{definition}

\begin{definition}[$1/1$ and $2/0$ edges]\label{def:1/1_and_2/0_edges}
Let $(d_1,\underline{n_1},\underline{\kappa_1},\underline{l_1},\underline{f_1}\mid d_2,\underline{n_2},\underline{\kappa_2},\underline{l_2},\underline{f_2})$ be a split of conditions as in Definition \ref{def:splits}. Define for the (adapted) conditions $p_{\underline{n_i}}, L_{\underline{\kappa_i}}, \lambda^{\to e_i}_{\underline{l_i}}$ and for $i=1,2$ the cycles
\begin{align*}
Y_{i}:=\ev_{e_i,*}\left( \prod_{k\underline{\kappa_i}}\ev_k^*(L_k)\cdot \prod_{t\in\underline{n_i}} \ev_t^*\left( p_t\right)\cdot \prod_{j\in \underline{l_i}} \ft_{\lambda_j^{\to e_i}}^*\left( 0\right) \cdot \mathcal{M}_{0,m_i}\left(\mathbb{R}^2,d_i\right) \right)\subset\mathbb{R}^2,
\end{align*}
where $m_i:=\#\underline{n_i}+\#\underline{\kappa_i}+\#\underline{f_i}$.
Notice that $(d_1,\underline{n_1},\underline{\kappa_1},\underline{l_1},\underline{f_1}\mid d_2,\underline{n_2},\underline{\kappa_2},\underline{l_2},\underline{f_2})$ is a $1/1$ split if and only if both $Y_i$ are $1$-dimensional. It is a $2/0$ split if and only if $Y_i$ is $0$-dimensional and $Y_t$ is $2$-dimensional (see \eqref{eq:equation_for_2/0_split} in Definition \ref{def:splits}).

Let $C$ be a tropical stable map with a contracted bounded edge $e$ such that $C$ is of splitting type $(d_1,\underline{n_1},\underline{\kappa_1},\underline{l_1},\underline{f_1}\mid d_2,\underline{n_2},\underline{\kappa_2},\underline{l_2},\underline{f_2})$. Then $m_i$ is the number of contracted ends of $C_i$ and the cycle $Y_i$ is the condition $C_i$ imposes on $C_t$ for $t\neq i$ via $e$.
For example, if $Y_1$ is $0$-dimensional, then the position of $v_2$ is completely determined by $Y_1$ since $v_2$ is connected to $v_1$ via $e$ in $C$ and $C$ is fixed by the given conditions $p_{\underline{n}}, L_{\underline{\kappa}},  \lambda_{[l-1]}, \lambda_l$. Since all given conditions are in general position, the dimension of $Y_2$ is $2$ in this case, i.e. $v_2$ cannot impose a condition via $e$ to $v_1$. In general, we have two cases for $C$:
\begin{itemize}
\item[(1)]
One of the cycles $Y_i$ is $0$-dimensional and the other one is $2$-dimensional. We then refer to $e$ as a \textit{$2/0$ edge}.
\item[(2)]
Both of the cycles $Y_i$ are $1$-dimensional. We then refer to $e$ as a \textit{$1/1$ edge}.
\end{itemize}
Which case occurs depends only on $d_i,\#\underline{n_i},\#\underline{\kappa_i},\#\underline{l_i},\#\underline{f_i}$ for $i=1,2$.
\end{definition}

\begin{example}\label{ex:2/0_split}
An example for a $1/1$ split is provided below, see Example \ref{ex:1/1_split}. An example for a $2/0$ split is the following:
Let $C$ be a degree $2$ tropical stable map that satisfies point conditions $p_{[2]}$, multi line conditions $L_{[4]}$, degenerated cross-ratios $\lambda_1=\lbrace p_1,L_1,L_2,L_3 \rbrace$, $\lambda_2=\lbrace p_1,p_2,L_1,L_2 \rbrace$ and a non-degenerated cross-ratio $\lambda'_3=(p_1L_1|p_2L_4)$ whose length is large enough such that $C$ has a contracted bounded edge $e$. Construction \ref{constr:cutting_contracted_bounded_edge} yields a split of $C$ into $C_1$ and $C_2$, where the vertices adjacent to the split edge $e$ are denoted by $v_i\in C_i$ for $i=1,2$. Figure \ref{Figure_24} shows $C_1$ and $C_2$, where we shifted $C_2$ in order to get a better picture (in fact $v_1$ and $v_2$ are the same point in $\mathbb{R}^2$).

\begin{figure}[H]
\centering
\def\svgwidth{350pt}
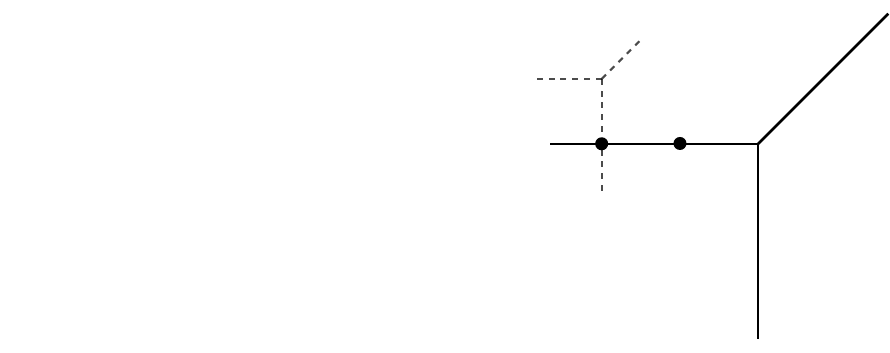
\caption{The curve $C_1$ satisfying $p_{1},L_{[3]},\lambda_{[2]}$ is shown on the left, the curve $C_2$ satisfying $p_2,L_4$ is shown on the right. Notice that the length of $e$ in $C$ is given by $\lambda'_3$, i.e. $C$ is fixed by the given conditions.}
\label{Figure_24}
\end{figure}

\end{example}

\begin{remark}\label{remark:split_of_conditions}
Fix a degree $d$, point conditions $p_{\underline{n}}$, multi line conditions $L_{\underline{\kappa}}$ and cross-ratio conditions $\lambda_{[l-1]}$. Let $(d_1,\underline{n_1},\underline{\kappa_1},\underline{l_1},\underline{f_1}\mid d_2,\underline{n_2},\underline{\kappa_2},\underline{l_2},\underline{f_2})$ denote a split of these conditions.
Consider degree $d_i$ tropical stable maps $C_i$ for $i=1,2$ with $\#\underline{n_i}+\#\underline{\kappa_i}+\#\underline{f_i}+1$ contracted ends that satisfy the point conditions $p_{\underline{n_i}}$, the multi line conditions $L_{\underline{\kappa_i}}$ and the cross-ratio conditions $\lambda^{\to e_i}_{\underline{l_i}}$.
The cycles $Y_i$ for $i=1,2$ tell us how to glue the end $e_1$ of $C_1$ to the end $e_2$ of $C_2$ to form a contracted bounded edge $e$ such that the new tropical stable map $C$ satisfies all given conditions.

If $Y_1$ is $0$-dimensional and $p_{e_2}$ is a point in $Y_1$, then considering tropical stable maps $C_2$ that satisfy $p_{\underline{n_2}},L_{\underline{\kappa_2}},\lambda^{\to e_2}_{\underline{l_2}}$ and that satisfy $p_{e_2}$ with the end $e_2$ allows us to glue $C_1$ to $C_2$, where the contracted bounded edge is contracted to $p_{e_2}\in\mathbb{R}^2$.

If both $Y_i$ are $1$-dimensional, then we can consider tropical stable maps $C_2$ that satisfy $p_{\underline{n_2}},L_{\underline{\kappa_2}},\lambda^{\to e_2}_{\underline{l_2}}$ and $Y_1$. Since $\ev_{e_2}(C_2)\in Y_2$, i.e. $C_2$ satisfies $Y_2$ by definition, the position of the contracted end $e_2$ of $C_2$ in $\mathbb{R}^2$ is a point $p$ contributing to the $0$-dimensional cycle $Y_1 \cdot Y_2$. On the other hand, there is a tropical stable map $C_1$ that satisfies $p_{\underline{n_2}},L_{\underline{\kappa_2}},\lambda^{\to e_1}_{\underline{l_2}}$ and $Y_2$ such that its end $e_1$ is contracted to $p$. Thus $e_1$ of $C_1$ and $e_2$ of $C_2$ can be glued to form a bounded edge $e$ that is contracted to $p$.
\end{remark}

\begin{corollary}[of Proposition \ref{prop:contracted_bounded_edge}]\label{corollary:1/1-edge_cut_standard_direction}
If $C$ is a tropical stable map as in Proposition \ref{prop:contracted_bounded_edge} whose contracted bounded edge is a $1/1$ edge, then the $1$-dimensional cycles $Y_i$ from Definition \ref{def:1/1_and_2/0_edges} have ends of primitive directions $(1,1),(-1,0)$ and $(0,-1)\in\mathbb{R}^2$ only. In other words, the $1$-dimensional conditions that a contracted bounded $1/1$ edge passes from one vertex to the other has ends of standard directions.
\end{corollary}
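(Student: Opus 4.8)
The plan is to read off the unbounded rays of $Y_i$ from the unbounded movements of the vertex $v_i$ adjacent to the new contracted end $e_i$, and then to force the direction of each such movement to be standard by combining the balancing condition with the structural results underlying Proposition \ref{prop:contracted_bounded_edge}.

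First I would unwind Definition \ref{def:1/1_and_2/0_edges}: since $Y_i=\ev_{e_i,*}(\dots)$ and $e_i$ is contracted to $v_i$, the map $\ev_{e_i}$ records exactly the position of $v_i$, so every end of the $1$-dimensional cycle $Y_i$ is the image of a branch of the moduli cycle of $C_i$ along which the position of $v_i$ runs off to infinity. The primitive direction of that end is then the direction of movement $b$ of $v_i$ in the corresponding unbounded $1$-dimensional family of $C_i$. In particular $v_i$ lies in the movable component of $C_i$; and as the moduli cycle of $C_i$ is $1$-dimensional (this is what characterises a $1/1$ split), the whole chain of lemmas culminating in Corollary \ref{corollary:no_chains_strong} applies verbatim to $C_i$. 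Hence, after passing to $\tilde\Gamma$ (Construction \ref{constr:tilde_Gamma}) and moving far enough that $b$ has stabilised (Remark \ref{remark:change_of_movement_direction}), the movable component is the single vertex $v_i$.

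It then remains to show $b\in\{(1,1),(-1,0),(0,-1)\}$, which I would obtain from a short case distinction at $v_i$. If $v_i$ is adjacent to a contracted end carrying a multi line condition, Remark \ref{remark:change_of_movement_direction} already forces $b$ to be parallel to a standard direction. Otherwise the only contracted end at $v_i$ is $e_i$. Because $\{v_i\}$ is the entire movable component, every bounded edge of $\tilde\Gamma$ at $v_i$ runs to a fixed vertex, and in order for the movement to be unbounded its direction at $v_i$ must be antiparallel to $b$ (a parallel one would shorten). If $\val(v_i)>3$, a cross-ratio is satisfied at $v_i$, so Corollary \ref{cor:CR_pfade_ueber_alle_edges_an_vertex} puts every edge at $v_i$ on a path to a contracted cross-ratio entry; as the ends of $\tilde\Gamma$ are non-contracted and of standard direction, $v_i$ can then carry no ends at all, and the remaining edges---all antiparallel to $b$ together with the direction-zero end $e_i$---cannot satisfy the balancing condition. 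Thus $\val(v_i)=3$; balancing of $e_i$ with the two remaining edges makes them antiparallel, and since two standard directions are never antiparallel, while two edges running to the fixed part would both be antiparallel to $b$ and hence parallel to each other, exactly one of the two is a standard end of some direction $s$ and the other a bounded edge of direction $-s$. Matching the latter to $-b$ gives $b=s$, a standard direction.

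The step I expect to be the main obstacle is the bookkeeping between $\Gamma$ and $\tilde\Gamma$: I must check that straightening the type (IIIa) and (IIIb) vertices in Construction \ref{constr:tilde_Gamma} produces no ends in non-standard directions (it does not, as this operation preserves edge directions) and that the unique movable vertex of $\tilde\Gamma$ is genuinely the vertex $v_i$ carrying $e_i$. I would also treat separately the degenerate branch in which $C_i$ has no point condition, where the single movable vertex is constrained only by a multi line condition; this situation is governed by Proposition \ref{prop:contracted_bounded_edge_no_point_conditions} and again yields a standard $b$. Granting this, running over all branches shows that $Y_i$ has ends only in the directions $(1,1),(-1,0),(0,-1)$.
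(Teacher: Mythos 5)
Your proof is correct and follows essentially the same route as the paper: ends of $Y_i$ are read off as unbounded movements of the vertex $v_i$ carrying $e_i$, Corollary \ref{corollary:no_chains_strong} reduces the movable component to the single vertex $v_i$, Corollary \ref{cor:CR_pfade_ueber_alle_edges_an_vertex} rules out $\val(v_i)>3$, and balancing at the resulting $3$-valent vertex forces the direction of movement to be that of an adjacent end of standard direction. The only real difference is that where the paper dismisses the multi-line-constrained (type (II)) case by appealing to the presence of a point condition (via Proposition \ref{prop:contracted_bounded_edge_no_point_conditions}), you handle it directly through Remark \ref{remark:change_of_movement_direction}; this is if anything slightly more careful, since a $1/1$ split may place all point conditions on one side of the cut.
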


\begin{proof}
Proposition \ref{prop:contracted_bounded_edge_no_point_conditions} implies that each contracted bounded edge that appears in the no-point-conditions case is a $2/0$ edge. Hence we may assume that at least one point condition is given.

Let $\Gamma$ be a tropical curve associated to a tropical stable map in $Y_i$ whose movement is unbounded, i.e. that gives rise to an end of $Y_i$. Corollary \ref{corollary:no_chains_strong} yields that the movable component of $\Gamma$ consists of exactly one vertex $v_i$ of type (I) or (II). Thus $v_i$ is of type (I) since we assumed that there is at least one point condition.
If there is a cross-ratio $\lambda_j\in\lambda_{[l-1]}$ such that $\lambda^{\to e_i}_j$ is satisfied at $v_i$, i.e. $\lambda^{\to e_i}_j\in\lambda_v$, then Corollary \ref{cor:CR_pfade_ueber_alle_edges_an_vertex} guarantees that $v_i$ is not adjacent to unbounded edges. This yields a contradiction when $v_i$ moves unboundedly as the proof of Proposition \ref{prop:contracted_bounded_edge} shows. Hence $v_i$ is a $3$-valent type (I) vertex which is adjacent to $e_i$ and an end $E$ of $\Gamma$. Therefore, $v_i$ moves parallel to $E$.
\end{proof}

\begin{corollary}\label{corollary:1/1-edge_cut_v_i_3_valent_adjacent_to_end}
We use Notation from Construction \ref{constr:cutting_contracted_bounded_edge}, i.e. we denote the vertex adjacent to the end $e_i$ of $C_i$ by $v_i$. Under the same assumptions of Corollary \ref{corollary:1/1-edge_cut_standard_direction}, it follows that $v_i$ is $3$-valent and adjacent to an end of $C_i$ for $i=1,2$.
\end{corollary}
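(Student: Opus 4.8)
The plan is to extract the statement from the local analysis already carried out for Corollary \ref{corollary:1/1-edge_cut_standard_direction}. Since $e_i$ is the contracted end produced by cutting $e$, we have $\ev_{e_i}(C_i)=h(v_i)$, so $v_i$ is exactly the vertex whose image traces out the cycle $Y_i$; hence the statement is entirely about the combinatorial type of $C_i$ near the point $\ev_{e_i}(C_i)$. First I would recall the description of this vertex obtained in the proof of Corollary \ref{corollary:1/1-edge_cut_standard_direction}: for a curve producing an unbounded ray of $Y_i$, Corollary \ref{corollary:no_chains_strong} forces the movable component to be a single vertex, which must be of type (I) because at least one point condition is present in the $1/1$ case; and if this vertex carried a cross-ratio, then Corollary \ref{cor:CR_pfade_ueber_alle_edges_an_vertex} would forbid it from being adjacent to an unbounded edge, contradicting the unbounded movement exactly as in the case analysis in the proof of Proposition \ref{prop:contracted_bounded_edge}. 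Thus the vertex adjacent to $e_i$ there is $3$-valent of type (I) and adjacent to a single end $E$, which is why the ends of $Y_i$ have standard directions.

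Second, I would transfer this description to the split curve $C_i$. The combinatorial type of $C_i$ is constant along each ray of $Y_i$, so the description of $v_i$ holds for every curve whose $\ev_{e_i}$-image lies on such a ray. It therefore suffices to show that the $C_i$ occurring in the split has $\ev_{e_i}(C_i)$ lying on an unbounded ray of $Y_i$. This is where the hypothesis that $|\lambda'_l|$ is large enters, since it is precisely this hypothesis (via Proposition \ref{prop:contracted_bounded_edge}) that produces the long contracted edge $e$ and ties the splitting to the unbounded behaviour of the cycles $Y_i$; accordingly I would argue that the relevant intersection point of $Y_1\cdot Y_2$ through which $C$ is assembled sits on the standard-direction rays of $Y_1$ and $Y_2$ rather than in the interior of a bounded cell.

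I expect this second step to be the main obstacle: one must rule out that $\ev_{e_i}(C_i)$ lies in the interior of a bounded edge of $Y_i$, where a priori $v_i$ could be a higher-valent cross-ratio vertex with no adjacent end. Once it is established that $\ev_{e_i}(C_i)$ lies on an unbounded ray, the constancy of the combinatorial type along that ray together with the first step yields at once that $v_i$ is $3$-valent and adjacent to an end $E$ of $C_i$, for $i=1,2$, which is the assertion.
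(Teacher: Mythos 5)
Your first paragraph reproduces what is, in effect, the paper's entire proof: the paper disposes of this corollary with the single remark that it ``follows immediately from the proof of Corollary \ref{corollary:1/1-edge_cut_standard_direction}'', and that proof indeed already establishes the claim for every curve that realizes an end of $Y_i$ --- by Corollary \ref{corollary:no_chains_strong} the movable component is a single vertex, which must be the vertex adjacent to $e_i$ (otherwise $\ev_{e_i}$ would be constant along the movement and could not sweep out an end of $Y_i$), it is of type (I) because $1/1$ edges only occur in the presence of point conditions (Proposition \ref{prop:contracted_bounded_edge_no_point_conditions}), it carries no cross-ratio by Corollary \ref{cor:CR_pfade_ueber_alle_edges_an_vertex} together with the unbounded-movement contradiction, hence it is $3$-valent and adjacent to an end. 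The paper reads the corollary as a statement about exactly these curves, and consistently it is only ever applied in that situation: in the proof of Lemma \ref{lemma:counting_after_1/1_split} the two groups of conditions are first moved into disjoint boxes so that $Y_1$ and $Y_2$ intersect only along their ends, whence every gluing point of a split curve automatically lies on an end of $Y_i$.

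The genuine gap is your second step --- showing that $\ev_{e_i}(C_i)$ lies on an unbounded ray of $Y_i$ --- which you yourself flag as the main obstacle and then do not prove; moreover, the mechanism you propose for it cannot work. Since $e$ is contracted, its length is invisible in $\mathbb{R}^2$: along the cell of the one-dimensional family containing $C$, only the length $l(e)$ varies while the two halves $C_1,C_2$ stay completely fixed, so making $|\lambda'_l|$ large forces $l(e)$ to be large and nothing more; it imposes no constraint whatsoever on where the gluing point $h(v_1)=h(v_2)\in Y_1\cap Y_2$ sits. For conditions in general position in the sense of Definition \ref{def:general_pos}, a bounded edge of $Y_1$ can cross a bounded edge of $Y_2$ transversally --- for instance, for $d_1=1$ with two point conditions and one adapted cross-ratio containing $e_1$ and a free contracted end, $Y_1$ contains the bounded segment between the two points, swept out by a $4$-valent vertex carrying that cross-ratio --- and gluing at such a crossing with a contracted edge of arbitrarily large length yields a stable map of nonzero multiplicity (Proposition \ref{prop:multiplicities_of_1/1_and_2/0}(b)) contributing to $N_{d}\left(p_{\underline{n}},L_{\underline{\kappa}},\lambda_{[l-1]},\lambda'_l\right)$ whose vertex $v_i$ is neither $3$-valent nor of the required form. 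So under your literal reading the transfer step is not merely unproven: the route you sketch (largeness of the cross-ratio) is incapable of closing it. What actually closes it is the hypothesis, implicit in the paper's formulation and explicit at the point of application, that the gluing point lies on an end of $Y_i$, arranged by repositioning the conditions as in Figure \ref{Figure_19}; once that is granted, your first paragraph already is the complete proof.
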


\begin{proof}
This follows immediately from the proof of Corollary \ref{corollary:1/1-edge_cut_standard_direction}.
\end{proof}

\section{Multiplicities of split curves}
This section answers the question of how multiplicities behave under splitting a tropical stable map $C$ into $C_1,C_2$. Note that the multiplicity of $C$ does not have to be equal to $\mult(C_1)\cdot \mult(C_2)$. We have to deal with this problem later.

\begin{definition}[Degenerated tropical lines]\label{def:degenerated_trop_lines}
The tropical intersections $L_{10}:=\max_{(x,y)\in\mathbb{R}^2}(x,0)\cdot \mathbb{R}^2$, $L_{01}:=\max_{(x,y)\in\mathbb{R}^2}(y,0)\cdot \mathbb{R}^2$ and $L_{1\text{-}1}:=\max_{(x,y)\in\mathbb{R}^2}(x,-y)\cdot \mathbb{R}^2$ and any translations thereof are called \textit{degenerated tropical lines}.
\end{definition}

\begin{figure}[H]
\centering
\def\svgwidth{350pt}
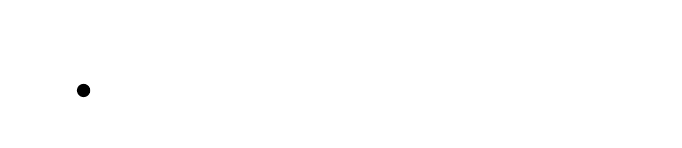
\caption{Degenerated tropical lines (from left to right) $L_{10},L_{01}$ and $L_{1\text{-}1}$ in $\mathbb{R}^2$ with ends of weight one.}
\label{Figure14}
\end{figure}

\begin{notation}[Replacing $1/1$ edge conditions]\label{notation:replacing_1/1-edge_conditions}
Let $C$ be a tropical stable map that contributes to $N_{d}\left(p_{\underline{n}},L_{\underline{\kappa}}, \lambda_{[l-1]},\lambda'_l \right)$ such that $C$ has a contracted bounded edge $e$ that is a $1/1$ edge. Split $e$ as in Construction \ref{constr:cutting_contracted_bounded_edge} to obtain $C_1,C_2$ and let $Y_t$ denote the $1$-dimensional condition $C_i$ satisfies for $i\neq t$ as in Definition \ref{def:1/1_and_2/0_edges}.
Let $v_i$ be the vertex of $C_i$ that is adjacent to $e_i$ ($e_i$ is the contracted end of $C_i$ that came from cutting $e$) which satisfies $Y_t$. Let $st\in\lbrace 01,10,1\text{-}1 \rbrace$ and let $L_{st}$ be a degenerated line as in Definition \ref{def:degenerated_trop_lines} such that its vertex is translated to $v_i$. Let $C_{i,st}$ denote the tropical curve that equals $C_i$, but where we replaced the $Y_t$ conditions with $L_{st}$, i.e. $C_{i,st}$ satisfies $L_{st}$ instead of $Y_t$.

Notice that only the multiplicities of $C_i$ and $C_{i,st}$ may differ. In particular, the multiplicity of $C_{i,st}$ may be zero, whereas the multiplicity of $C_i$ can be nonzero.
\end{notation}

\begin{example}\label{ex:1/1_split}
Let $C$ be a degree $3$ tropical stable map that satisfies point conditions $p_{[5]}$, multi line conditions $L_{[3]}$, degenerated cross-ratios $\lambda_1=\lbrace p_1,p_2,p_5,L_1 \rbrace$, $\lambda_2=\lbrace p_1,p_5,L_2,L_3 \rbrace$ and a non-degenerated cross-ratio $\lambda'_3=(p_1p_2|L_2L_3)$ whose length is large enough such that $C$ has a contracted bounded edge $e$. Construction \ref{constr:cutting_contracted_bounded_edge} yields a split of $C$ into $C_1$ and $C_2$, where the vertices adjacent to the split edge $e$ are denoted by $v_i\in C_i$ for $i=1,2$. Figure \ref{Figure_21} shows $C_1$ and $C_2$, where we shifted $C_2$ in order to get a better picture (in fact $v_1$ and $v_2$ are the same point in $\mathbb{R}^2$ as in Example \ref{ex:2/0_split}). 

\begin{figure}[H]
\centering
\def\svgwidth{350pt}
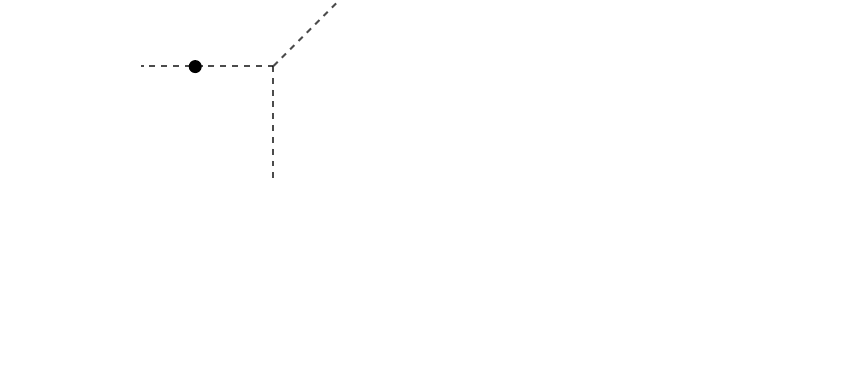
\caption{The curve $C_1$ satisfying $p_{[4]},L_1,\lambda_1$ is shown on the left, the curve $C_2$ satisfying $p_5,L_2,L_3,\lambda_2$ is shown on the right. Notice that the length of $e$ in $C$ is given by $\lambda'_3$, i.e. $C$ is fixed by the given conditions.}
\label{Figure_21}
\end{figure}

Notice that $e$ is a $1/1$ edge, so we use Notation \ref{notation:replacing_1/1-edge_conditions} to replace conditions. For example, $C_{2,10}$ equals $C_2$, where the end $e_2$ adjacent to $v_2$ satisfies the degenerated line condition $L_{10}$. Figure \ref{Figure_21} shows that $C_{2,10}$ is not fixed by its conditions, i.e. $\mult(C_{2,10})=0$. If we consider $C_{2,01}$ instead, its multiplicity is $1$ since it is the absolute value of the determinant the following matrix $M(C_{2,01})$ (see Definition \ref{def:ev_mult})
\begin{align*}
\begin{array}{cc cc  ccc c}
  && \multicolumn{2}{c}{\footnotesize \textrm{Base $p_5$}}  &l_1&l_2&l_3&\\
  &\ldelim({5}{0.5em} &1&0& 0&0&0 & \rdelim){5}{0.5em} \\
  &&0&1 &0&0&0&\\
   \footnotesize\textrm{$L_{01}$}&&  0&1 & 1&0&0 & \\
\footnotesize\textrm{$L_2$}&&1&0 &0&-1&0&\\
\footnotesize\textrm{$L_3$}&&1&0&0&0&1&\\
\end{array}
\end{align*}
where $p_5$ is chosen as base point and the third row is associated to $L_{01}$ satisfied by $e_2$.

\end{example}

\begin{proposition}\label{prop:multiplicities_of_1/1_and_2/0}
Let $C$ be a tropical stable map that contributes to $N_{d}\left(p_{\underline{n}},L_{\underline{\kappa}}, \lambda_{[l-1]},\lambda'_l \right)$ such that $C$ has a contracted bounded edge $e$. The components arising from cutting $e$ as in Construction \ref{constr:cutting_contracted_bounded_edge} are denoted by $C_1,C_2$.
\begin{itemize}
\item[(a)] If $e$ is a $2/0$ edge, then\begin{align*}
\mult(C)=\mult(C_1)\cdot \mult(C_2).
\end{align*}
\item[(b)] If $e$ is a $1/1$ edge, then\begin{align*}
\mult(C)=|\mult(C_{1,10})\cdot \mult(C_{2,01})-\mult(C_{1,01})\cdot \mult(C_{2,10})|,
\end{align*}
where $C_{i,st}$ is defined in Notation \ref{notation:replacing_1/1-edge_conditions}.
\end{itemize}
\end{proposition}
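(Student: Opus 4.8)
The plan is to reduce both statements to a single determinant computation, exploiting the block structure that cutting $e$ imposes on the evaluation matrix of $C$. By Proposition \ref{prop:cr_mult}, applied with the single non-degenerated cross-ratio $\lambda'_l$ carried along as an extra condition, the multiplicity factors as $\mult(C)=\mult_{\ev}(C)\cdot\prod_{v}\mult_{\CR}(v)$, where the matrix underlying $\mult_{\ev}(C)$ now has, besides the point- and line-rows $\ev_t$ with $t\in\underline{n}\cup\underline{\kappa}$, one extra row coming from $\ft_{\lambda'_l}$ and one extra column for the length of $e$. I would first record that the cross-ratio factor splits: cutting $e$ (Construction \ref{constr:cutting_contracted_bounded_edge}) sends each vertex of $C$ into exactly one of $C_1,C_2$, and adapting the cross-ratios does not alter the local data at any vertex, so $\prod_{v\in C}\mult_{\CR}(v)=\bigl(\prod_{v\in C_1}\mult_{\CR}(v)\bigr)\bigl(\prod_{v\in C_2}\mult_{\CR}(v)\bigr)$. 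Since the cross-ratio factor of each $C_{i,st}$ is exactly $\prod_{v\in C_i}\mult_{\CR}(v)$, it suffices to analyse the evaluation determinant and to reattach the $\mult_{\CR}$-factors at the end.

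Next I would set up the matrix. As $e$ is contracted, its length-column is zero in every point- and line-row and is nonzero (equal to $\pm1$) only in the $\ft_{\lambda'_l}$-row, so Laplace expansion along this column reduces $|\det|$ to $|\det M'|$, where $M'$ records the point- and line-conditions against the base point, placed at the common image $q$ of $v_1$ and $v_2$, together with the bounded edges of $C_1$ and of $C_2$. Because the conditions on $C_1$ do not involve the edge lengths of $C_2$ and vice versa, while both depend on the shared base point, $M'$ has the block shape
\begin{align*}
M'=\begin{pmatrix} A_1 & B_1 & 0\\ A_2 & 0 & B_2\end{pmatrix},
\end{align*}
with $A_i$ the two base-point columns and $B_i$ the $C_i$-edge columns.

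For a $2/0$ edge the component carrying the $0$-dimensional cycle $Y_i$, say $C_1$, is rigid, so $[A_1\,|\,B_1]$ is square; reordering the columns as $(\{q,\ell^{(1)}\},\ell^{(2)})$ makes $M'$ block lower triangular, the top right block being $0$ because $C_1$ ignores the edges of $C_2$, whence $\det M'=\det[A_1\,|\,B_1]\cdot\det B_2$. The first factor is $\pm\mult_{\ev}(C_1)$, and by Remark \ref{remark:split_of_conditions} the point $q\in Y_1$ imposes a point condition on the end $e_2$, so that $\det B_2=\pm\mult_{\ev}(C_2)$; reattaching the cross-ratio factors gives $\mult(C)=\mult(C_1)\mult(C_2)$, which is part (a).

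For a $1/1$ edge both $[A_i\,|\,B_i]$ have a one-dimensional kernel, and I would compute $\det M'$ by the generalized Laplace expansion along the two base-point columns: only complementary minors of the block-diagonal matrix $B_1\oplus B_2$ that delete exactly one $C_1$-row and one $C_2$-row survive, yielding
\begin{align*}
\det M'=\pm\bigl(\det[a_1\,|\,B_1]\det[a_2'\,|\,B_2]-\det[a_1'\,|\,B_1]\det[a_2\,|\,B_2]\bigr),
\end{align*}
where $a_i,a_i'$ are the two columns of $A_i$. Adding to $[A_i\,|\,B_i]$ the row of a degenerated line $L_{st}$ from Definition \ref{def:degenerated_trop_lines} fixes one base-point coordinate and expands away the corresponding column, so $|\det[a_i'\,|\,B_i]|=\mult_{\ev}(C_{i,10})$ and $|\det[a_i\,|\,B_i]|=\mult_{\ev}(C_{i,01})$, matching Notation \ref{notation:replacing_1/1-edge_conditions}. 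The hard part will be reconciling the signed difference above with the difference of the nonnegative multiplicities in the statement, since a priori $|X-Y|\neq\bigl||X|-|Y|\bigr|$. This is exactly where I would invoke Corollaries \ref{corollary:1/1-edge_cut_standard_direction} and \ref{corollary:1/1-edge_cut_v_i_3_valent_adjacent_to_end}: the vertex $v_i$ is $3$-valent and adjacent to an end of standard direction $D_i$, and the two determinants $\det[a_i'\,|\,B_i]$ and $\det[a_i\,|\,B_i]$ are, up to a common factor, the two coordinates of $D_i$. Since every standard direction except $(1,1)$ has a vanishing coordinate, in each case at least one of the two products $\det[a_1\,|\,B_1]\det[a_2'\,|\,B_2]$ and $\det[a_1'\,|\,B_1]\det[a_2\,|\,B_2]$ vanishes, and when $D_1=D_2$ the difference is zero anyway; hence $|X-Y|=\bigl||X|-|Y|\bigr|$. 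Reattaching the cross-ratio factors then yields part (b).
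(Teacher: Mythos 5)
Your proposal is correct, and it shares the paper's skeleton: reduction to $\ev$-multiplicities via Proposition \ref{prop:cr_mult}, base point at the common image of $v_1,v_2$, block structure of the $\ev$-matrix; your part (a) is essentially identical to the paper's. Part (b), however, runs along a genuinely different computational route, and it also repairs a point the paper passes over. The paper expands recursively, one column at a time, along all length-columns of $C_1$, collects the resulting coefficients $F_{10},F_{01},F_{1\text{-}1}$, and then recovers them from the system $\det(A_{10})=F_{01}-F_{1\text{-}1}$, $\det(A_{01})=-F_{10}-F_{1\text{-}1}$, $\det(A_{1\text{-}1})=F_{10}+F_{01}$, the one-parameter ambiguity cancelling against the relation $-\det(M_{10})+\det(M_{01})+\det(M_{1\text{-}1})=0$. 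Your single two-column (generalized) Laplace expansion along the base-point columns gives the bilinear identity $\det M'=\pm\bigl(\det[a_1|B_1]\det[a_2'|B_2]-\det[a_1'|B_1]\det[a_2|B_2]\bigr)$ in one stroke, since the complementary minors are block diagonal and vanish unless exactly one row is deleted from each block; the $F_{st}$ bookkeeping disappears entirely, which is shorter and more transparent. Moreover, both your computation and the paper's terminate in an identity of \emph{signed} determinants, whereas statement (b) is phrased in the nonnegative quantities $\mult(C_{i,st})$; since $|X-Y|\neq\bigl||X|-|Y|\bigr|$ in general, this final identification genuinely needs an argument, which the paper omits and you supply: by the kernel--cofactor relation the pair $(\det[a_i'|B_i],\det[a_i|B_i])$ is, up to a common scalar, the pair of coordinates of the direction $D_i$ in which $v_i$ moves, and Corollaries \ref{corollary:1/1-edge_cut_standard_direction} and \ref{corollary:1/1-edge_cut_v_i_3_valent_adjacent_to_end} force $D_i$ to be a standard direction, so either $D_1=D_2$ (both differences vanish) or at least one of the two products vanishes, and in either case signed and unsigned differences agree. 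The only caveat is that those corollaries are stated for $C$ as in Proposition \ref{prop:contracted_bounded_edge} (at least one point condition, $|\lambda'_l|$ large), a hypothesis not literally present in the proposition being proved; it is satisfied in every application (Lemmas \ref{lemma:counting_after_2/0_split} and \ref{lemma:counting_after_1/1_split}), where the paper itself invokes the same corollaries to kill one summand. Finally, your opening step --- expanding away the $\ft_{\lambda'_l}$-row against the length-column of the contracted edge $e$ --- makes explicit a reduction the paper performs silently by writing down $M(C)$ without that row and column.
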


\begin{proof}
It is sufficient to prove (a), (b) for ev-multiplicities only since the cross-ratio multiplicities can be expressed locally at vertices (see Proposition \ref{prop:cr_mult}). Thus contributions from vertices to cross-ratio multiplicities do not depend on cutting edges.
\begin{itemize}
\item[(a)]
Denote the vertices adjacent to $e$ by $v_1,v_2$ such that $v_1\in C_1$ and $v_2\in C_2$ and assume without loss of generality that $Y_1$ (notation from Definition \ref{def:1/1_and_2/0_edges}) is $0$-dimensional. Consider the ev-matrix $M(C)$ of $C$ of Definition \ref{def:ev_mult} with base point $v_1$, i.e.
\begin{align*}
M(C)=
\begin{array}{cc ccc|cccc !{\color{red!70!black}\vline width 1.5pt} ccccc}
  && \multicolumn{3}{c}{\footnotesize \textrm{Base $v_1$}} & \multicolumn{4}{c}{\footnotesize \textrm{lengths in $C_1$}}&\multicolumn{4}{c}{\footnotesize \textrm{lengths in $C_2$}} &\\
\footnotesize\textrm{conditions in $C_1$} &\ldelim({6}{0.5em}& \multicolumn{3}{c|}{\multirow{3}*{*}} & \multicolumn{4}{c!{\color{red!70!black}\vline width 1.5pt}}{\multirow{3}*{*}} & \multicolumn{4}{c}{\multirow{3}*{0}}
  & \rdelim){6}{0.5em} \\
  & &&& &&&& &&&& &\\
  & &&& &&&& &&&& &\\
  \arrayrulecolor{red!70!black}\Cline{1.5pt}{2-14}\arrayrulecolor{black} \footnotesize\textrm{conditions in $C_2$} && \multicolumn{3}{c|}{\multirow{3}*{*}} & \multicolumn{4}{c!{\color{red!70!black}\vline width 1.5pt}}{\multirow{3}*{0}} & \multicolumn{4}{c}{\multirow{3}*{*}} 
  &  \\
  & &&& &&&& &&&& &\\
  & &&& &&&& &&&& &\\
\end{array}
\end{align*}
Let $y_1$ be the number of rows that belong to the conditions in $C_1$, let $x_1$ be the number of columns belonging to the base point and the lengths in $C_1$.
Using notation from Definition \ref{def:1/1_and_2/0_edges}, we obtain
\begin{align*}
x_1&=2+3d_1-3+\#\underline{n_1}+\#\underline{\kappa_1}-\#\underline{l_1}+\#\underline{f_1}+1,\\
y_1&=2\cdot \#\underline{n_1}+\#\underline{\kappa_1}.
\end{align*}
On the other hand, $C_1$ is fixed by its set of conditions since $Y_1$ is $0$-dimensional, i.e. we can apply \eqref{eq:gen_pos_condition_count} for $m=\#\underline{n_1}+\#\underline{\kappa_1}+(\#\underline{f_1}+1)$ to obtain $x_1=y_1$.
Thus the bold red lines in $M(C)$ above divide $M(C)$ into squares, hence 
\begin{align*}
|\det(M(C))|=\mult(C_1)\cdot |\det(M)|,
\end{align*}
where $M$ is the square matrix on the bottom right. We define the matrix
\begin{align*}
M(C_{2,v_2}):=
\begin{array}{c cc !{\color{red!70!black}\vline width 1.5pt} ccc c}
  & \multicolumn{2}{c}{\footnotesize \textrm{Base $v_2$}}  &&&&\\
  \ldelim({5}{0.5em} &1&0& \multicolumn{3}{c}{\multirow{2}*{0}} & \rdelim){5}{0.5em} \\
  &0&1 &&&&\\
  \arrayrulecolor{red!70!black}\Cline{1.5pt}{1-7}\arrayrulecolor{black} &  \multicolumn{2}{c!{\color{red!70!black}\vline width 1.5pt}}{\multirow{3}*{*}} & \multicolumn{3}{c}{\multirow{3}*{$M$}} & \\
&& &&&\\
&&&&\\
\end{array}
\end{align*}
where the first two columns are chosen in such a way that $M(C_{2,v_2})$ is the ev-matrix of $C_2$ with respect to the base point $v_2$. Notice that
\begin{align*}
|\det(M)|=|\det(M(C_{2,v_2}))|
\end{align*}
and
\begin{align*}
|\det(M(C_{2,v_2}))|=\mult(C_2)
\end{align*}
hold, where $C_2$ satisfies the additional point condition imposed on $e_2$ by $Y_1$.
\item[(b)]
We assume that the weights of each multi line $\omega(L_k)$ (see Definition \ref{def:tropical_line}) for $k\in\kappa$ equals $1$ since we can pull out the factor $\omega(L_k)$ frome each row of the $ev$-matrix, apply all the following arguments and multiply with $\omega(L_k)$ later.

Denote the vertex of $C_1$ adjacent to the cut edge $e$ by $v_1$ and the other vertex adjacent to $e$ by $v_2$. The ev-matrix $M(C)$ of $C$ with respect to the base point $v_1$ is given by
\begin{align*}
M(C)=
\begin{array}{cc cc|cccc !{\color{red!70!black}\vline width 1.5pt} c| cccc c}
  && \multicolumn{2}{c}{\footnotesize \textrm{Base $v_1$}}  & \multicolumn{5}{c}{\footnotesize \textrm{lengths in $C_1$}}&\multicolumn{4}{c}{\footnotesize \textrm{lengths in $C_2$}}& \\
\footnotesize\textrm{conditions in $C_1$} &\ldelim({6}{0.5em}& \multicolumn{2}{c|}{\multirow{3}*{*}}  & \multicolumn{4}{c!{\color{red!70!black}\vline width 1.5pt}}{\multirow{3}*{*}}&* & \multicolumn{4}{c}{\multirow{3}*{0}} & \rdelim){6}{0.5em} \\
  & && &&&& &\vdots&&&& &\\
  & && &&&& &*&&&& &\\
  \arrayrulecolor{red!70!black}\Cline{1.5pt}{2-14}\arrayrulecolor{black} \footnotesize\textrm{conditions in $C_2$} && \multicolumn{2}{c|}{\multirow{3}*{*}} & \multicolumn{4}{c!{\color{red!70!black}\vline width 1.5pt}}{\multirow{3}*{0}}& 0 & \multicolumn{4}{c}{\multirow{3}*{*}} &  \\
  & && &&&& &\vdots&&&& &\\
  & && &&&& &0&&&& &\\
\end{array}
\end{align*}
The bold red lines divide $M(C)$ into square pieces at the upper left and the lower right. This follows from similar arguments used in the proof of part (a). Let $M$ be the matrix consisting of the lower right block of $M(C)$ whose entries (see above) are indicated by $*$ and its columns are associated to lengths in $C_2$.
Let $A=(a_{ij})_{ij}$ be the submatrix of $M(C)$ given by the rows that belong to conditions of $C_1$ and by the base point's columns and the columns that are associated to lengths in $C_1$, i.e. $A$ consists of all the $*$-entries above the bold red line in $M(C)$.

Consider the Laplace expansion of the rightmost column of $A$. Recursively, use Laplace expansion on every column that belongs to the lengths in $C_1$ starting with the rightmost column.
Eventually, we end up with a sum in which each summand contains a factor $\det (M_{a_{r1}a_{r2}})$ for a matrix $M_{a_{r1}a_{r2}}$, which is one of the following three matrices, namely
\begin{align*}
M_{a_{r1}a_{r2}}:=
\begin{array}{c cc|ccc c}
  & \multicolumn{2}{c}{} &\multicolumn{3}{c}{\footnotesize \textrm{lenghts in $C_2$}}&\\
  \ldelim({6}{0.5em} &a_{r1}&a_{r2}&0&\dots&0& \rdelim){6}{0.5em}\\
   \cline{1-7}  &\multicolumn{2}{c|}{\multirow{5}*{$*$}}&\multicolumn{3}{c}{\multirow{5}*{$M$}} & \\
  &&&&&&\\
  &&&&&&\\
  &&&&&&\\
  &&&&&&\\
\end{array},
\end{align*}
where $(a_{r1},a_{r2})=(1,0)$, $(a_{r1},a_{r2})=(0,1)$ or $(a_{r1},a_{r2})=(1,-1)$ are the remaining entries of $A$ in its $r$-th row after the recursive procedure.
Notice that in each of the three cases the entries of the first two columns are of such a form that $M_{st}$ for $st=10,01,1\text{-}1$ is the ev-matrix of $C_{2,st}$ (see Notation \ref{notation:replacing_1/1-edge_conditions}) with base point $v_2$.
We can group the summands according to the values $a_{r1},a_{r2}$ and obtain in total
\begin{align}\label{eq:proof5_multiplicities_of_1/1_and_2/0}
|\det(M(C))|=|F_{10}\cdot\det(M_{10})+F_{01}\cdot\det(M_{01})+F_{1\text{-}1}\cdot\det(M_{1\text{-}1})|,
\end{align}
where $F_{st}\in\mathbb{R}$ for $st=10,01,1\text{-}1$ are factors occuring due to the recursive Laplace expansion. More precisely, let $b$ be the number of bounded edges in $C_1$, i.e. the number of Laplace expansions we applied. Then
\begin{align}\label{eq:proof_multiplicities_of_1/1_and_2/0_F_st}
F_{st}=\sum_{r:(a_{r1},a_{r2})=(s,t)}\sum_{\sigma} \sgn(\sigma) \prod_{j=3}^{3+b} a_{\sigma(j)j},
\end{align}
where the second sum goes over all bijections $\sigma:\lbrace 3,\dots,3+b \rbrace\to \lbrace 1,\dots,r-1,r+1,\dots,b+1\rbrace$, i.e. it goes over all possibilities of choosing for each column Laplace expansion was used on an entry in a row of $A$ which is not the $r$-th row.

Let $A_{10},A_{01},A_{1\text{-}1}$ be the square matrices obtained from $A$ by adding the new first row $(1,0,0,\dots,0)$, $(0,1,0\dots,0)$ or $(1,-1,0,\dots,0)$ to $A$. Again, notice that $A_{st}$ for $st=10,01,1\text{-}1$ is the ev-matrix of $C_{1,st}$ (see Notation \ref{notation:replacing_1/1-edge_conditions}, Definition \ref{def:ev_mult}) with base point $v_1$.
We claim that
\begin{align}\label{eq:proof1_multiplicities_of_1/1_and_2/0}
\det(A_{10})=F_{01}-F_{1\text{-}1}
\end{align}
holds. Let $N$ be the number of columns and rows of $A_{st}$. Denote the entries of the $\ev$-matrix $M(C)$ by $m(C)_{ij}$. Define
\begin{align*}
S_{st}:=\lbrace r\in [N-1] \mid m(C)_{r1}=s,\; m(C)_{r2}=t \rbrace
\end{align*}
for $(s,t)=(1,0),(0,1),(1,-1)$ and notice that $\#S_{10}+\#S_{01}+\#S_{1\textrm{-}1}=N-1$. Denote the entries of $A_{10}$ by $a^{(10)}_{ij}$ and apply Leibniz' determinant formula to obtain
\begin{align*}
\det(A_{10})&=\sum_{\sigma\in\mathds{S}_N}\sgn(\sigma)\prod_{j=1}^N a^{(10)}_{\sigma(j)j}\\
&=\sum_{\substack{\sigma\in\mathds{S}_N\\ \sigma(2)\in S_{01}}}\sgn(\sigma)\prod_{j=1}^N a^{(10)}_{\sigma(j)j}
+
\sum_{\substack{\sigma\in\mathds{S}_N\\ \sigma(2)\in S_{1\textrm{-1}}}}\sgn(\sigma)\prod_{j=1}^N a^{(10)}_{\sigma(j)j}
=F_{01}-F_{1\text{-}1},
\end{align*}
where the second equality holds by definition of $S_{st}$ and the third equality holds by considering how contributions of $F_{01}$ and $F_{1\text{-}1}$ arise as choices of entries of $A$, see \eqref{eq:proof_multiplicities_of_1/1_and_2/0_F_st}. The minus sign comes from the factor $a^{(10)}_{\sigma(2),2}=-1$ in each product in the last sum. Thus \eqref{eq:proof1_multiplicities_of_1/1_and_2/0} holds.

We can show in a similar way that
\begin{align}
\det(A_{01})&=-\left(F_{10}+F_{1\text{-}1}\right)=-F_{10}-F_{1\text{-}1},\label{eq:proof2_multiplicities_of_1/1_and_2/0} \\
\det(A_{1\text{-}1})&=F_{10}+F_{1\text{-}1}+F_{01}-F_{1\text{-}1}=F_{10}+F_{01}\label{eq:proof3_multiplicities_of_1/1_and_2/0}
\end{align}
hold. Solving the system of linear equations \eqref{eq:proof1_multiplicities_of_1/1_and_2/0}, \eqref{eq:proof2_multiplicities_of_1/1_and_2/0}, \eqref{eq:proof3_multiplicities_of_1/1_and_2/0} for $F_{10},F_{01},F_{1\text{-}1}$ yields
\begin{align}\label{eq:proof4_multiplicities_of_1/1_and_2/0}
\left(\begin{array}{c}
F_{10}\\
F_{01}\\
F_{1\text{-}1}
\end{array}\right)\in
\left(\begin{array}{c}
-\det(A_{01})\\
\det(A_{10})\\
0
\end{array}\right)+
\langle\left(\begin{array}{c}
-1\\
1\\
1
\end{array}\right)\rangle,
\end{align}
where the $1$-dimensional part appears because of the relation
\begin{align*}
-\det(M_{10})+\det(M_{01})+\det(M_{1\text{-}1})=0.
\end{align*}
Combining \eqref{eq:proof5_multiplicities_of_1/1_and_2/0} with \eqref{eq:proof4_multiplicities_of_1/1_and_2/0} proves part (b), where $A_{st}=C_{1,st}$ and $M_{st}=C_{2,st}$.
\end{itemize}
\end{proof}

\section{General Kontsevich's formula}
In this section, we prove a general tropical Kontsevich's formula. For that, we must first deal with the behavior of the multiplicity of tropical stable maps under a split. More precisely, we would like to see that one summand in part (b) of Proposition \ref{prop:multiplicities_of_1/1_and_2/0} always vanishes.

\begin{definition}\label{def:split_respecting_CR}
Given a split $(d_1,\underline{n_1},\underline{\kappa_1},\underline{l_1},\underline{f_1}\mid d_2,\underline{n_2},\underline{\kappa_2},\underline{l_2},\underline{f_2})$ and a cross-ratio $\lambda'_l=(\beta_1 \beta_2 | \beta_3 \beta_4)$ with entries in $\underline{n_1}\cup\underline{\kappa_1}\cup\underline{f_1} \cup\underline{n_2}\cup\underline{\kappa_2}\cup\underline{f_2}$ and $\beta_1=\min_{i=1}^4(\beta_i)$ (the labels of ends of abstract tropical curves are natural numbers), we say that $(d_1,\underline{n_1},\underline{\kappa_1},\underline{l_1},\underline{f_1}\mid d_2,\underline{n_2},\underline{\kappa_2},\underline{l_2},\underline{f_2})$ is a split \textit{respecting} $\lambda'_l$ if $\beta_1,\beta_2\in\underline{n_1}\cup\underline{\kappa_1}\cup\underline{f_1}$ and $\beta_3,\beta_4\in\underline{n_2}\cup\underline{\kappa_2}\cup\underline{f_2}$. Using the minimum here prevents a factor of $\frac{1}{2}$ later, which would come from renaming $C_1$ to $C_2$ and vice versa.
\end{definition}

\begin{lemma}\label{lemma:counting_after_2/0_split}
Let $(d_1,\underline{n_1},\underline{\kappa_1},\underline{l_1},\underline{f_1}\mid d_2,\underline{n_2},\underline{\kappa_2},\underline{l_2},\underline{f_2})$ be a $2/0$ split of given general positioned conditions as in Remark \ref{remark:split_of_conditions} and Definition \ref{def:splits} that respects $\lambda'_l$ such that additionally $3d_1=|\underline{n_1}|+|\underline{l_1}|-|\underline{f_1}|$ holds. Then
\begin{align}\label{eq:lemma:counting_after_2/0_split}
\sum_{C:\; (d_1,\underline{n_1},\underline{\kappa_1},\underline{l_1},\underline{f_1}\mid d_2,\underline{n_2},\underline{\kappa_2},\underline{l_2},\underline{f_2})} \mult(C)
=
N_{d_1}\left(p_{\underline{n_1}},L_{\underline{\kappa_1}},\lambda^{\to e_1}_{\underline{l_1}} \right)
\cdot
N_{d_2}\left(p_{\underline{n_2}},p_{e_2},L_{\underline{\kappa_2}}, \lambda^{\to e_2}_{\underline{l_2}} \right)
\end{align}
holds, where the sum goes over all tropical stable maps $C$ with a contracted bounded edge $e$ such that $C$ contributes to $N_{d}\left(p_{\underline{n}},L_{\underline{\kappa}}, \lambda_{[l-1]},\lambda'_l \right)$, where $\lambda'_l$ is the large non-degenerated cross-ratio $C$ satisfies such that $C$ has a contracted bounded edge, and $C$ is of splitting type $(d_1,\underline{n_1},\underline{\kappa_1},\underline{l_1},\underline{f_1}\mid d_2,\underline{n_2},\underline{\kappa_2},\underline{l_2},\underline{f_2})$, and $p_{e_2}$ is a point condition imposed on $e_2$.
\end{lemma}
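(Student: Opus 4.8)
The plan is to set up a multiplicity-preserving bijection between the tropical stable maps $C$ appearing on the left-hand side of \eqref{eq:lemma:counting_after_2/0_split} and pairs $(C_1,C_2)$ counted on the right-hand side, and then to factor the resulting sum using the position-independence of the second count.

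First I would apply Construction \ref{constr:cutting_contracted_bounded_edge} to cut the contracted bounded edge $e$ of each $C$ of the prescribed splitting type (such an edge exists by Proposition \ref{prop:contracted_bounded_edge}), obtaining $C_1,C_2$ together with the adapted cross-ratios $\lambda^{\to e_i}_{\underline{l_i}}$. The hypothesis $3d_1=|\underline{n_1}|+|\underline{l_1}|-|\underline{f_1}|$ together with \eqref{eq:equation_for_2/0_split} means that $e$ is a $2/0$ edge whose cycle $Y_1$ is $0$-dimensional; hence $C_1$ is rigid and contributes to $N_{d_1}\left(p_{\underline{n_1}},L_{\underline{\kappa_1}},\lambda^{\to e_1}_{\underline{l_1}}\right)$. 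Setting $p_{e_2}:=\ev_{e_1}(C_1)$ and using that $e$ is contracted, so that $v_1$ and $v_2$ map to the same point in $\mathbb{R}^2$, the curve $C_2$ satisfies the extra point condition $p_{e_2}$ on its end $e_2$ and therefore contributes to $N_{d_2}\left(p_{\underline{n_2}},p_{e_2},L_{\underline{\kappa_2}},\lambda^{\to e_2}_{\underline{l_2}}\right)$. Proposition \ref{prop:multiplicities_of_1/1_and_2/0}(a) then gives $\mult(C)=\mult(C_1)\cdot\mult(C_2)$.

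For the reverse direction I would invoke Remark \ref{remark:split_of_conditions}: given any such $C_1$ and any $C_2$ satisfying the additional point condition $p_{e_2}=\ev_{e_1}(C_1)$, I glue the ends $e_1,e_2$ into a contracted bounded edge $e$, producing a stable map $C$. The point that must be checked here is that $\lambda'_l$ is genuinely satisfied with its prescribed length. Because the split respects $\lambda'_l$ in the sense of Definition \ref{def:split_respecting_CR}, the ends $\beta_1,\beta_2$ lie on the $C_1$-side and $\beta_3,\beta_4$ on the $C_2$-side, so the path criterion forces the cross-ratio path to run through $e$; choosing the length of $e$ equal to $|\lambda'_l|$ then makes $C$ contribute to $N_{d}\left(p_{\underline{n}},L_{\underline{\kappa}},\lambda_{[l-1]},\lambda'_l\right)$ with exactly the prescribed splitting type. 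These two constructions are mutually inverse, and so they define a multiplicity-multiplying bijection.

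Finally I would factor the sum. Summing $\mult(C)=\mult(C_1)\cdot\mult(C_2)$ over the bijection and grouping by $C_1$ gives $\sum_{C_1}\mult(C_1)\sum_{C_2}\mult(C_2)$, where for fixed $C_1$ the inner sum is exactly $N_{d_2}\left(p_{\underline{n_2}},p_{e_2},L_{\underline{\kappa_2}},\lambda^{\to e_2}_{\underline{l_2}}\right)$ by Definition \ref{def:numbers_of_interest}. By Remark \ref{remark:numbers_independent_of_positions_and_lengths} this number does not depend on the position of the extra point $p_{e_2}$, hence it is the same for every $C_1$; pulling it out of the outer sum and using $\sum_{C_1}\mult(C_1)=N_{d_1}\left(p_{\underline{n_1}},L_{\underline{\kappa_1}},\lambda^{\to e_1}_{\underline{l_1}}\right)$ yields \eqref{eq:lemma:counting_after_2/0_split}. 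I expect the main obstacle to be the reverse direction: verifying that the glued curve lands in exactly the right combinatorial stratum and satisfies $\lambda'_l$ for the intended grouping of the four ends (rather than some competing distribution), and that $p_{e_2}=\ev_{e_1}(C_1)$ is automatically in general position with respect to the $C_2$-conditions. Both of these rely on the general-position hypothesis on the original conditions and on the respecting-$\lambda'_l$ assumption.
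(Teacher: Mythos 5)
Your proposal is correct and takes essentially the same route as the paper's proof: cut the contracted bounded edge via Construction \ref{constr:cutting_contracted_bounded_edge}, glue pairs $(C_1,C_2)$ back together via Remark \ref{remark:split_of_conditions}, and split the multiplicity using part (a) of Proposition \ref{prop:multiplicities_of_1/1_and_2/0}. The paper's own proof is a terser version of exactly this argument; your extra care in factoring the sum via the position-independence of Remark \ref{remark:numbers_independent_of_positions_and_lengths} and in checking that $\lambda'_l$ is recovered when gluing only makes explicit what the paper leaves implicit.
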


\begin{proof}
Each tropical stable map $C$ on the left-hand side of \eqref{eq:lemma:counting_after_2/0_split} can be cut at its contracted bounded edge as in Construction \ref{constr:cutting_contracted_bounded_edge} to obtain a tropical stable map $C_1$ that contributes to $N_{d_1}\left(p_{\underline{n_1}},L_{\underline{\kappa_1}},\lambda^{\to e_1}_{\underline{l_1}} \right)$ and a tropical stable map $C_2$ that contributes to $N_{d_2}\left(p_{\underline{n_2}},p_{e_2},L_{\underline{\kappa_2}}, \lambda^{\to e_2}_{\underline{l_2}} \right)$.

The other way around, each pair of tropical stable maps $C_1,C_2$ such that $C_1$ contributes to $N_{d_1}\left(p_{\underline{n_1}},L_{\underline{\kappa_1}},\lambda^{\to e_1}_{\underline{l_1}} \right)$ and $C_2$ contributes to $N_{d_2}\left(p_{\underline{n_2}},p_{e_2},L_{\underline{\kappa_2}}, \lambda^{\to e_2}_{\underline{l_2}} \right)$ can be glued to a tropical stable map $C$ using Remark \ref{remark:split_of_conditions}.

Proposition \ref{prop:multiplicities_of_1/1_and_2/0} states that
\begin{align*}
\mult(C)=\mult(C_1)\cdot\mult(C_2)
\end{align*}
and thus proves the lemma.
\end{proof}

\begin{lemma}\label{lemma:counting_after_1/1_split}
Let $(d_1,\underline{n_1},\underline{\kappa_1},\underline{l_1},\underline{f_1}\mid d_2,\underline{n_2},\underline{\kappa_2},\underline{l_2},\underline{f_2})$ be a $1/1$ split of given general positioned conditions as in Remark \ref{remark:split_of_conditions} and Definition \ref{def:splits} that respects $\lambda'_l$. Then
\begin{align}\label{eq:lemma:counting_after_1/1_split}
\sum_{C:\; (d_1,\underline{n_1},\underline{\kappa_1},\underline{l_1},\underline{f_1}\mid d_2,\underline{n_2},\underline{\kappa_2},\underline{l_2},\underline{f_2})} \mult(C)
=
N_{d_1}\left(p_{\underline{n_1}},L_{\underline{\kappa_1}}, L_{e_1} , \lambda^{\to e_1}_{\underline{l_1}} \right)
\cdot
N_{d_2}\left(p_{\underline{n_2}},L_{\underline{\kappa_2}}, L_{e_2} , \lambda^{\to e_2}_{\underline{l_2}} \right)
\end{align}
holds, where the sum goes over all tropical stable maps $C$ with a contracted bounded edge $e$ such that $C$ contributes to $N_{d}\left(p_{\underline{n}},L_{\underline{\kappa}}, \lambda_{[l-1]},\lambda'_l \right)$, where $\lambda'_l$ is the large non-degenerated cross-ratio $C$ satisfies such that $C$ has a contracted bounded edge, and $C$ is of splitting type $(d_1,\underline{n_1},\underline{\kappa_1},\underline{l_1},\underline{f_1}\mid d_2,\underline{n_2},\underline{\kappa_2},\underline{l_2},\underline{f_2})$, and $L_{e_i}$ for $i=1,2$ is a tropical multi line condition with ends of weight one that is imposed on $e_i$.
\end{lemma}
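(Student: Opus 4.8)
The plan is to read off $\mult(C)$ from Proposition \ref{prop:multiplicities_of_1/1_and_2/0}(b), to use the local structure of the cut vertices from Corollary \ref{corollary:1/1-edge_cut_v_i_3_valent_adjacent_to_end} to collapse the determinant there into a single product, and then to recognise the two factors as the multi line counts on the right-hand side of \eqref{eq:lemma:counting_after_1/1_split}. I would first set up the correspondence underlying the sum: each $C$ occurring on the left of \eqref{eq:lemma:counting_after_1/1_split} has, by Proposition \ref{prop:contracted_bounded_edge}, a unique contracted bounded edge $e$, here a $1/1$ edge of the prescribed splitting type; cutting $e$ as in Construction \ref{constr:cutting_contracted_bounded_edge} produces a pair $(C_1,C_2)$ with $\ev_{e_1}(C_1)=\ev_{e_2}(C_2)$, and Remark \ref{remark:split_of_conditions} explains how to glue any compatible pair back into such a $C$. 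Because the split respects $\lambda'_l=(\beta_1\beta_2\mid\beta_3\beta_4)$ in the sense of Definition \ref{def:split_respecting_CR}, the ends $\beta_1,\beta_2$ lie on $C_1$ and $\beta_3,\beta_4$ on $C_2$, so $e$ is exactly the edge whose length is prescribed by the large cross-ratio $\lambda'_l$; the $\min$-convention there makes the gluing canonical and avoids a factor $\tfrac12$.

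Next I compute $\mult(C)$. Proposition \ref{prop:multiplicities_of_1/1_and_2/0}(b) gives
\[
\mult(C)=\bigl|\mult(C_{1,10})\,\mult(C_{2,01})-\mult(C_{1,01})\,\mult(C_{2,10})\bigr|.
\]
By Corollary \ref{corollary:1/1-edge_cut_v_i_3_valent_adjacent_to_end} the vertex $v_i$ adjacent to $e_i$ is $3$-valent and carries an end $E_i$ of $C_i$ whose primitive direction $D_i$ is a standard direction, so $v_i$ can only be displaced parallel to $D_i$. Hence the degenerated line $L_{st}$ parallel to $D_i$ imposes no condition on $v_i$, and $\mult(C_{i,st})=0$ for that $st$. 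Checking the possibilities for the pair $(D_1,D_2)$ shows that exactly one of the two products above vanishes in every case (and both vanish, together with $\mult(C)$, when $D_1\parallel D_2$, consistent with the two movement lines then failing to meet transversally). This is the promised vanishing of one summand, so $\mult(C)$ reduces to a single complementary product, either $\mult(C_{1,10})\mult(C_{2,01})$ or $\mult(C_{1,01})\mult(C_{2,10})$.

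Finally I would identify the two factors. Expanding the right-hand side of \eqref{eq:lemma:counting_after_1/1_split} gives a sum over pairs $(C_1,C_2)$ in which $C_i$ meets the auxiliary multi line $L_{e_i}$ on $e_i$. Since $L_{e_i}$ has ends in all three standard directions and, by Corollaries \ref{corollary:1/1-edge_cut_standard_direction} and \ref{corollary:1/1-edge_cut_v_i_3_valent_adjacent_to_end}, the one-dimensional movement of $v_i$ crosses $L_{e_i}$ transversally in a single ray, the contribution of $C_i$ equals the degenerated-line multiplicity $\mult(C_{i,st})$ for the direction $st$ of the crossed ray. Using that the numbers $N_{d_i}$ do not depend on the chosen positions (Remark \ref{remark:numbers_independent_of_positions_and_lengths}), I would match, pair by pair, the crossed rays with the complementary directions surviving in the determinant, so that each admissible $(C_1,C_2)$ on the right corresponds to a unique glued $C$ on the left with $\mult(C)$ equal to the product of the two contributions; summing over all pairs then yields \eqref{eq:lemma:counting_after_1/1_split}.

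The hard part is exactly this last matching: one must verify that the multi line condition, which sees all three standard directions at once, aggregates the degenerated-line multiplicities so that both direction-cases of the surviving product — in particular the awkward case $D_i$ diagonal, where the surviving direction on side $i$ is dictated by the \emph{other} side — are realised with the correct multiplicity and none is over- or undercounted. This is precisely the role of the artificial line conditions, and it rests on the trivalence and standard-direction statements of Corollaries \ref{corollary:1/1-edge_cut_standard_direction} and \ref{corollary:1/1-edge_cut_v_i_3_valent_adjacent_to_end} together with the relation $-\det(M_{10})+\det(M_{01})+\det(M_{1\text{-}1})=0$ from the proof of Proposition \ref{prop:multiplicities_of_1/1_and_2/0}(b); the surrounding determinant manipulations are then the routine Laplace-expansion bookkeeping already used there.
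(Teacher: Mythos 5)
Your first two steps are sound and close to the paper: cutting at the unique contracted bounded edge, gluing back via Remark \ref{remark:split_of_conditions}, and invoking Proposition \ref{prop:multiplicities_of_1/1_and_2/0}(b). Your observation that $\mult(C_{i,st})=0$ whenever the degenerated line $L_{st}$ is parallel to the movement direction $D_i$ of $v_i$ is correct, and it does kill one summand of the determinant formula for every transversal pair $(D_1,D_2)$. (Minor slip: when $D_1$ and $D_2$ are both diagonal the parallelism rule kills neither product, since only $\mult(C_{i,1\text{-}1})$ is forced to vanish; but that case is excluded by general position, so it does no harm.)

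The genuine gap is the last step, which you yourself flag as ``the hard part'' and then do not carry out: passing from $\sum_C \mult(C)$, where each $C$ contributes a product $\mult(C_{1,st_1})\cdot\mult(C_{2,st_2})$ whose surviving directions $st_1,st_2$ vary from curve to curve (they depend on the local directions of $Y_1$ and $Y_2$ at that particular intersection point), to the single global product $N_{d_1}\left(p_{\underline{n_1}},L_{\underline{\kappa_1}}, L_{e_1} , \lambda^{\to e_1}_{\underline{l_1}} \right)\cdot N_{d_2}\left(p_{\underline{n_2}},L_{\underline{\kappa_2}}, L_{e_2} , \lambda^{\to e_2}_{\underline{l_2}} \right)$. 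For arbitrary positions of the conditions there is no canonical multiplicity-preserving bijection between the points of $Y_1\cdot Y_2$ (which index the glued curves $C$) and the pairs of points of $\left(Y_1\cdot L_{e_1}\right)\times\left(Y_2\cdot L_{e_2}\right)$ (which index the pairs $(C_1,C_2)$ counted on the right); constructing one is exactly the content of the lemma. The paper does this by repositioning: it moves the conditions $p_{\underline{n_1}},L_{\underline{\kappa_1}},\lambda_{\underline{l_1}}$ and $p_{\underline{n_2}},L_{\underline{\kappa_2}},\lambda_{\underline{l_2}}$ into two disjoint boxes so that $Y_1$ and $Y_2$ intersect only along their ends in the fixed configuration of Figure \ref{Figure_19}; then the \emph{same} summand $\mult(C_{1,01})\cdot\mult(C_{2,10})$ vanishes uniformly for all $C$, and every point of $Y_1\cdot Y_2$ is reached from a unique pair $(V_1,V_2)$ by sliding $v_1,v_2$ along their adjacent ends (Corollary \ref{corollary:1/1-edge_cut_v_i_3_valent_adjacent_to_end}), which gives the bijection at once. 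Moreover, this repositioning must itself be justified: Remark \ref{remark:numbers_independent_of_positions_and_lengths}, which you invoke, only gives position-independence of the total counts $N_{d_i}$, not of the left-hand side of \eqref{eq:lemma:counting_after_1/1_split}, which is a partial sum restricted to one splitting type. The paper proves this invariance separately, using that the splitting type cannot jump while conditions move in a bounded box (two contracted bounded edges would contradict general position) and that the cycles obtained by forgetting the conditions on one side are balanced. Both ingredients --- the explicit configuration realizing the bijection and the invariance of the splitting-type-restricted sum --- are absent from your proposal, so identity \eqref{eq:lemma:counting_after_1/1_split} is not established.
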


\begin{proof}
The ends of $Y_1$ and $Y_2$ (see Definition \ref{def:1/1_and_2/0_edges}) are of standard directions, i.e. of direction $(1,1),(-1,0)$ and $(0,-1)$ by Corollary \ref{corollary:1/1-edge_cut_standard_direction}. The position of $Y_1$ and $Y_2$ in $\mathbb{R}^2$ depends only on the position of the given conditions. In particular, moving the given conditions (while keeping the property of being in general position) moves $Y_1$ and $Y_2$ as well.

Assume that the given conditions are positioned in such a way that $Y_1$ and $Y_2$ intersect only in their ends as shown in Figure \ref{Figure_19}. Choose the multi line conditions $L_{e_1}$ and $L_{e_2}$ with weights one as in Figure \ref{Figure_19} and consider a tropical stable map $C_1$ that contributes to $N_{d_1}\left(p_{\underline{n_1}},L_{\underline{\kappa_1}}, L_{e_1} , \lambda^{\to e_1}_{\underline{l_1}} \right)$ and a tropical stable map $C_2$ that contributes to $N_{d_2}\left(p_{\underline{n_2}},L_{\underline{\kappa_2}}, L_{e_2} , \lambda^{\to e_2}_{\underline{l_2}} \right)$. The contracted end of $C_i$ for $i=1,2$ that satisfies $L_{e_i}$ is $e_i$. Let $v_i$ denote the vertex adjacent to $e_i$ for $i=1,2$.
Notice that $\ev_{e_i}(C_i)\in Y_i$, i.e. $C_i$ satisfies $Y_i$ by definition. Hence $v_i$ is a point in $Y_i\cdot L_{e_i}$ for $i=1,2$. Each pair of points $(v_1,v_2)$ is uniquely associated to a point $p$ in $Y_1\cdot Y_2$, see Figure \ref{Figure_19}. By Corollary \ref{corollary:1/1-edge_cut_v_i_3_valent_adjacent_to_end} each of the vertices $v_i$ is $3$-valent and adjacent to an end of $C_i$ for $i=1,2$. Hence (by moving $v_1,v_2$ along those ends) each pair of tropical stable maps $(C_1,C_2)$ as above can be glued to a tropical stable map $C$ as in Remark \ref{remark:split_of_conditions} such that the ends $e_1,e_2$ are glued to form a bounded edge that is contracted to $p$. On the other hand each tropical stable map $C$ on the left hand side of \eqref{eq:lemma:counting_after_1/1_split} can be split into a pair $(C_1,C_2)$ of tropical stable maps as above using Construction \ref{constr:cutting_contracted_bounded_edge}. Moreover,
\begin{align*}
\mult(C)=\mult(C_1)\cdot\mult(C_2)
\end{align*}
holds by Proposition \ref{prop:multiplicities_of_1/1_and_2/0} since $\mult(C_{1,01})$ and $\mult(C_{2,10})$ both vanish by our choice of positions of $Y_1$ and $Y_2$. Therefore \eqref{eq:lemma:counting_after_1/1_split} follows.

\begin{figure}[H]
\centering
\def\svgwidth{300pt}
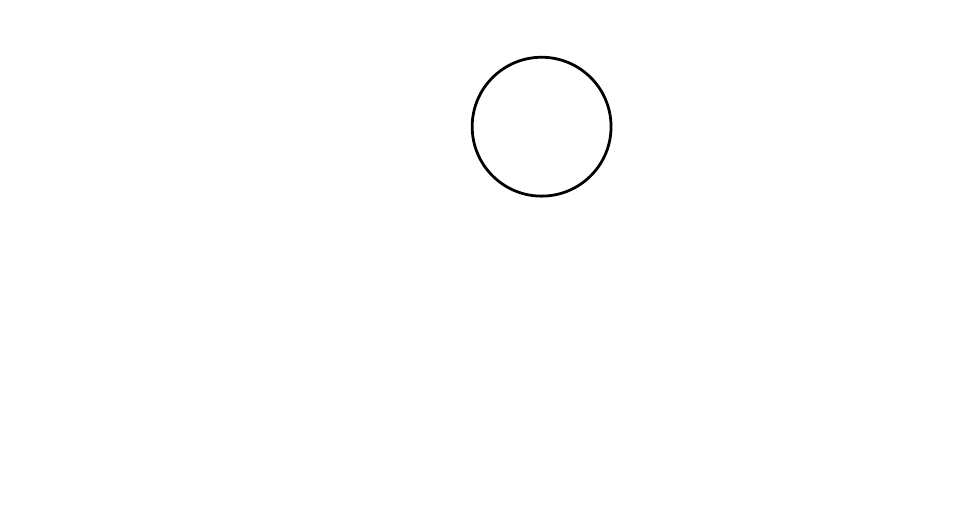
\caption{The $1$-dimensional conditions $Y_1$ and $Y_2$ after movement, together with the (multi) line conditions $L_{e_1}$ and $L_{e_2}$, where $p\in Y_1\cdot Y_2$ is the point associated to $V_1\in Y_1\cdot L_{e_1}$ and $V_2\in Y_2\cdot L_{e_2}$.}
\label{Figure_19}
\end{figure}

To finish the proof, we need to see that we can always assume that $Y_1$ and $Y_2$ intersect as shown in Figure \ref{Figure_19}, i.e. we want to show that the left hand side of \eqref{eq:lemma:counting_after_1/1_split} does not depend on the position of $Y_1$ and $Y_2$. Let $C$ be a tropical stable map contributing to $N_{d_1+d_2}\left(p_{\underline{n_1}},p_{\underline{n_2}},L_{\underline{\kappa_1}},L_{\underline{\kappa_2}},\lambda_{\underline{l_1}},\lambda_{\underline{l_2}},\lambda'_l \right)$ as in Proposition \ref{prop:contracted_bounded_edge}. Notice that $n\geq 1$ since we have a $1/1$ edge by Proposition \ref{prop:contracted_bounded_edge_no_point_conditions}. The cross-ratio's length $|\lambda_l'|$ is so large such that there is a contracted bounded edge $e$ in $C$, and $C$ is of splitting type $(d_1,\underline{n_1},\underline{\kappa_1},\underline{l_1},\underline{f_1}\mid d_2,\underline{n_2},\underline{\kappa_2},\underline{l_2},\underline{f_2})$.
Consider the cycle $Z_i$ that arises from forgetting the point conditions $p_{\underline{n_i}}$ and the line conditions $L_{\underline{\kappa_i}}$ for $i=1,2$ imposed on $C$. Hence $C$ gives rise to a top-dimensional cell of $Z_i$, where points in that cell correspond to $C$ together with some movement of the conditions $p_{\underline{n_i}},L_{\underline{\kappa_i}}$.
The proof of Proposition \ref{prop:contracted_bounded_edge} implies that if $|\lambda'_l|$ is large enough, then the given conditions can be moved in a bounded area $B$ (say $B\subset \mathbb{R}^2$ is a rectangular box) and all tropical stable maps that satisfy this moved conditions still have a contracted bounded edge.
Moreover, the splitting type of those tropical stable maps cannot change since that would require two contracted bounded edges which would contradict that our given conditions are in general position. Since $Z_1,Z_2$ are balanced, we might choose different positions for our point and line conditions for every splitting type without effecting the overall count.
Let $B_1,B_2\subset B$ be disjoint small rectangular boxes such that $B_1$ lies in the lower right corner of $B$ and $B_2$ lies in the upper left corner of $B$. Move the conditions $p_{\underline{n_1}} ,L_{\underline{\kappa_1}} ,\lambda_{\underline{l_1}}$ into $B_1$ and the conditions $p_{\underline{n_2}} ,L_{\underline{\kappa_2}} ,\lambda_{\underline{l_2}}$ into $B_2$ while maintaining their property of being in general position. By choosing $B_1$ and $B_2$ small enough, we can bring $Y_1$ and $Y_2$ in the desired position from Figure \ref{Figure_19}.
\end{proof}

\begin{theorem}[General Kontsevich's formula]\label{thm:generalized_kontsevich}
We use notation from Notation \ref{notation:underlined_symbols}, Definition \ref{def:1/1_and_2/0_edges}, \ref{def:split_respecting_CR} and Remark \ref{remark:split_of_conditions}.
Fix a degree $d$, point conditions $p_{\underline{n}}$, multi line conditions $L_{\underline{\kappa}}$ and degenerated cross-ratios $\lambda_{[l]}$ such that these conditions are in general position.
Let $\lambda'_l$ denote a cross-ratio that degenerates to $\lambda_l$. 
\begin{itemize}
\item[(a)]
If there is at least one point condition, i.e. $p_{\underline{n}}\neq \emptyset$, then the equation
\begin{align}\label{eq:generalized_kontsevich}
\begin{split}
N_{d}\left(p_{\underline{n}},L_{\underline{\kappa}}, \lambda_{[l]} \right)
=
&\sum_{\substack{(d_1,\underline{n_1},\underline{\kappa_1},\underline{l_1},\underline{f_1}\mid d_2,\underline{n_2},\underline{\kappa_2},\underline{l_2},\underline{f_2})\\ \textrm{ is a $1/1$ split respecting }\lambda'_l}}
N_{d_1}\left(p_{\underline{n_1}},L_{\underline{\kappa_1}}, L_{e_1} , \lambda^{\to e_1}_{\underline{l_1}} \right)
\cdot
N_{d_2}\left(p_{\underline{n_2}},L_{\underline{\kappa_2}}, L_{e_2} , \lambda^{\to e_2}_{\underline{l_2}} \right)\\
&+\sum_{\substack{(d_1,\underline{n_1},\underline{\kappa_1},\underline{l_1},\underline{f_1}\mid d_2,\underline{n_2},\underline{\kappa_2},\underline{l_2},\underline{f_2})\\ \textrm{ is a $2/0$ split respecting $\lambda'_l$ and } \\ 3d_1=|\underline{n_1}|+|\underline{l_1}|-|\underline{f_1}| }}
N_{d_1}\left(p_{\underline{n_1}},L_{\underline{\kappa_1}},\lambda^{\to e_1}_{\underline{l_1}} \right)
\cdot
N_{d_2}\left(p_{\underline{n_2}},p_{e_2},L_{\underline{\kappa_2}}, \lambda^{\to e_2}_{\underline{l_2}} \right)\\
&+\sum_{\substack{(d_1,\underline{n_1},\underline{\kappa_1},\underline{l_1},\underline{f_1}\mid d_2,\underline{n_2},\underline{\kappa_2},\underline{l_2},\underline{f_2})\\ \textrm{ is a $2/0$ split respecting $\lambda'_l$ and } \\ 3d_2=|\underline{n_2}|+|\underline{l_2}|-|\underline{f_2}| }}
N_{d_1}\left(p_{\underline{n_1}},p_{e_1},L_{\underline{\kappa_1}},\lambda^{\to e_1}_{\underline{l_1}} \right)
\cdot
N_{d_2}\left(p_{\underline{n_2}},L_{\underline{\kappa_2}}, \lambda^{\to e_2}_{\underline{l_2}} \right)
\end{split}
\end{align}
holds.
\item[(b)]
If there are no point conditions, i.e. $p_{\underline{n}}=\emptyset$, then the equation
\begin{align}\label{eq:initial_values_recursion}
N_{d}\left(L_{\underline{\kappa}}, \lambda_{[l]} \right)
=
\sum_{\substack{(\underline{l_1},\underline{f_1}\mid \underline{l_2},\underline{f_2})\\ \textrm{ is a $2/0$ split respecting }\lambda'_l}}
N_{0}\left(L_a,L_b, \lambda_{\underline{l_1}}^{\to e} \right) \cdot
N_{d}\left(p,L_{\underline{\kappa}}\backslash \lbrace L_a,L_b \rbrace, \lambda_{\underline{l_2}}^{\to e}\right)
\end{align}
holds, where the line conditions $L_a,L_b$ are the ones of Lemma \ref{lemma:initial_values_1}.
\end{itemize}
\ \\
Moreover, \eqref{eq:generalized_kontsevich} and \eqref{eq:initial_values_recursion} give rise to a recursion with two types of initial values:
\begin{itemize}
\item[(1)]
The numbers $N_{d}\left(p_{\underline{n}}\right)$ which tropical Kontsevich's formula (Corollary \ref{cor:tropical_kontsevich_formula}) provides.
\item[(2)]
The numbers $N_{0}\left(L_a,L_b, \lambda_{\underline{l_1}}^{\to e} \right)$ which satisfy
\begin{align}\label{eq:initial_values_CR_mult}
N_{0}\left(L_a,L_b, \lambda_{\underline{l_1}}^{\to e} \right)
=
\omega(L_a)\cdot\omega(L_b)\cdot \mult_{\CR}(v'),
\end{align}
where $v'$ denotes the only vertex of the only tropical stable map contributing to $N_{0}\left(L_a,L_b, \lambda_{\underline{l_1}}^{\to e} \right)$ and $\mult_{\CR}(v')$ is its cross-ratio multiplicity, see Definition \ref{def:CR_mult}. Notice that in the special case of $\lambda_{\underline{l_1}}^{\to e}=\emptyset$ we have
\begin{align}
N_{0}\left(L_a,L_b \right)
=
\omega(L_a)\cdot\omega(L_b).
\end{align}
\end{itemize}
\end{theorem}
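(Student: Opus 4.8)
The plan is to turn the theorem into a bookkeeping statement by reducing it to the counting lemmas already established. For part (a) I would first invoke Remark \ref{remark:numbers_independent_of_positions_and_lengths} to replace the degenerated cross-ratio $\lambda_l$ by a non-degenerated $\lambda'_l$ of large length without changing the number, so that $N_{d}\left(p_{\underline{n}},L_{\underline{\kappa}}, \lambda_{[l]} \right)=N_{d}\left(p_{\underline{n}},L_{\underline{\kappa}}, \lambda_{[l-1]},\lambda'_l \right)$. Since $p_{\underline{n}}\neq\emptyset$, Proposition \ref{prop:contracted_bounded_edge} guarantees that every contributing stable map $C$ has exactly one contracted bounded edge $e$, so the number equals $\sum_C \mult(C)$, which I would reorganise by cutting each $C$ at $e$ via Construction \ref{constr:cutting_contracted_bounded_edge} and grouping the $C$ according to their splitting type.

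The combinatorial point I would then make precise is that, because $|\lambda'_l|$ is large, the path realizing $\lambda'_l=(\beta_1\beta_2|\beta_3\beta_4)$ runs through $e$, so cutting $e$ necessarily separates $\{\beta_1,\beta_2\}$ from $\{\beta_3,\beta_4\}$; declaring $\beta_1=\min$ to lie in the first component (Definition \ref{def:split_respecting_CR}) turns the resulting split into one respecting $\lambda'_l$ that is uniquely determined by $C$. Every such split is a $1/1$ split or a $2/0$ split by Definition \ref{def:1/1_and_2/0_edges}. Summing $\mult(C)$ over the curves of a fixed splitting type, Lemma \ref{lemma:counting_after_1/1_split} evaluates the $1/1$-contributions as the first sum in \eqref{eq:generalized_kontsevich}, while Lemma \ref{lemma:counting_after_2/0_split} evaluates the $2/0$-contributions; the two inequivalent ways a $2/0$ edge can carry a zero-dimensional condition ($3d_1=|\underline{n_1}|+|\underline{l_1}|-|\underline{f_1}|$ versus the same with index $2$) give the second and third sums, the latter obtained from the lemma by interchanging the two components. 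For part (b) the same strategy applies, but with $p_{\underline{n}}=\emptyset$ Proposition \ref{prop:contracted_bounded_edge} is replaced by Lemmas \ref{lemma:initial_values_1}, \ref{lemma:initial_values_2} and Proposition \ref{prop:contracted_bounded_edge_no_point_conditions}: the only contracted bounded edge is produced by resolving the vertex carrying the two multi-line conditions $L_a,L_b$ appearing in $\lambda_l$, so cutting $e$ always yields a degree-$0$ component carrying $L_a,L_b$ and a degree-$d$ component. As the degree-$0$ component is zero-dimensional it pins down a point $p$ on the other component, forcing a $2/0$ split, and Lemma \ref{lemma:counting_after_2/0_split} yields \eqref{eq:initial_values_recursion}.

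It then remains to identify the initial values. The numbers $N_{d}\left(p_{\underline{n}}\right)$ carry no cross-ratio and no line conditions, hence are precisely the numbers produced by (tropical) Kontsevich's formula. For the second family, a degree-$0$ stable map is entirely contracted, so its image is a single point $P\in\mathbb{R}^2$; by Lemmas \ref{lemma:initial_values_1}, \ref{lemma:initial_values_2} the contributing map is unique and consists of a single vertex $v'$ of valence $3+\#\lambda_{v'}$ adjacent to the contracted ends carrying $L_a,L_b$. Proposition \ref{prop:cr_mult} factors its multiplicity as $\mult_{\ev}(C)\cdot\mult_{\CR}(v')$; the evaluation factor is the stable intersection multiplicity of $L_a,L_b$ through $P$, which equals $\omega(L_a)\,\omega(L_b)$ because any two distinct standard directions span a lattice of index one, and the cross-ratio factor is $\mult_{\CR}(v')$ by definition. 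This establishes \eqref{eq:initial_values_CR_mult} and, when $\lambda^{\to e}_{\underline{l_1}}=\emptyset$ and thus $\mult_{\CR}(v')=1$, its stated specialisation.

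The main obstacle I expect is the combinatorial bookkeeping in parts (a)–(b): verifying that the assignment sending $C$ to its split is a bijection onto the splits respecting $\lambda'_l$, that the minimum convention removes the potential factor $\tfrac12$ without omitting any configuration, and that the $1/1$ versus $2/0$ dichotomy together with the two orientations of a $2/0$ edge account for every term exactly once. The genuinely quantitative content — that multiplicities split correctly — has already been absorbed into Proposition \ref{prop:multiplicities_of_1/1_and_2/0} and the two counting lemmas, so the remaining work is organizational rather than computational.
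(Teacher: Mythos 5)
Your treatment of parts (a) and (b) follows the paper's proof essentially verbatim: degenerate $\lambda_l$ to a long $\lambda'_l$ via Remark \ref{remark:numbers_independent_of_positions_and_lengths}, produce the contracted bounded edge via Proposition \ref{prop:contracted_bounded_edge} (resp.\ Lemmas \ref{lemma:initial_values_1}, \ref{lemma:initial_values_2} and Proposition \ref{prop:contracted_bounded_edge_no_point_conditions} when $p_{\underline{n}}=\emptyset$), cut it with Construction \ref{constr:cutting_contracted_bounded_edge}, group the contributing maps by splitting type, and evaluate each group with Lemmas \ref{lemma:counting_after_1/1_split} and \ref{lemma:counting_after_2/0_split}. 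Your derivation of \eqref{eq:initial_values_CR_mult} (a degree-$0$ map is entirely contracted, hence a single vertex at the intersection point of $L_a$ and $L_b$, with multiplicity factored by Proposition \ref{prop:cr_mult}) is also the paper's argument.

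The one genuine gap is in the ``Moreover'' clause: you identify the two types of initial values but never show that the recursion actually terminates at them. Each application of \eqref{eq:generalized_kontsevich} consumes one cross-ratio but creates new multi line conditions $L_{e_1},L_{e_2}$ (in the $1/1$ terms) or new point conditions $p_{e_i}$ (in the $2/0$ terms), so once the cross-ratios are exhausted you are left with numbers of the form $N_{d}\left(p_{\underline{n}},L_{\underline{\kappa}}\right)$ --- cross-ratio-free but still carrying line conditions --- which are not among your stated initial values and which neither \eqref{eq:generalized_kontsevich} nor \eqref{eq:initial_values_recursion} can process further, since both require at least one cross-ratio. The paper closes this loop with an explicit case analysis: if point conditions remain, tropical B\'ezout's theorem strips the leftover multi line conditions and reduces everything to the Kontsevich numbers $N_{d}\left(p_{\underline{n}}\right)$; if no point conditions remain, then $N_{d}\left(L_{\underline{\kappa}}\right)=0$ for $d\neq 0$, while for $d=0$ one obtains $\omega(L_a)\cdot\omega(L_b)$. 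This step is easy but necessary, and your plan should include it to justify that the two stated families of numbers really suffice as initial values.
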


Using Tyomkin's correspondence theorem \ref{thm:correspondence_thm_CRC} and Remark \ref{remark:numbers_independent_of_positions_and_lengths}, Theorem \ref{thm:generalized_kontsevich} immediately yields the following corollary.

\begin{corollary}[Non-tropical general Kontsevich's formula]\label{cor:non-tropical_general_Kontsevich_formula}
Let $N^{\operatorname{class}}_{d}\left(p_{\underline{n}}, \mu_{[l]} \right)$ denote the number of plane rational degree $d$ curves that satisfy point conditions and non-tropical cross-ratios $\mu_1,\dots,\mu_l$ as in Theorem \ref{thm:correspondence_thm_CRC} such that all conditions are in general position. Then Theorem \ref{thm:generalized_kontsevich} provides a recursive formula to calculate these numbers with initial values as in Theorem \ref{thm:generalized_kontsevich}.
\end{corollary}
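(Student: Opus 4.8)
The plan is to derive the classical statement purely formally from the tropical recursion in Theorem~\ref{thm:generalized_kontsevich} by sandwiching it between Tyomkin's correspondence theorem (Theorem~\ref{thm:correspondence_thm_CRC}) and the length-independence of the tropical counts (Remark~\ref{remark:numbers_independent_of_positions_and_lengths}). Since there is no line condition on the classical side, I would fix point conditions $p_{\underline{n}}$ and non-tropical cross-ratios $\mu_{[l]}$ in general position and work with empty line-condition set $\underline{\kappa}=\emptyset$ throughout the first reduction.

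First I would translate the classical count into a tropical one. By Theorem~\ref{thm:correspondence_thm_CRC}, for each $\mu_j$ there is an associated tropical cross-ratio $\lambda'_j$ with
\begin{align*}
N^{\operatorname{class}}_{d}\left(p_{\underline{n}}, \mu_{[l]} \right)=N_{d}\left(p_{\underline{n}}, \lambda'_{[l]} \right).
\end{align*}
Next I would pass from the non-degenerated cross-ratios $\lambda'_{[l]}$ to their degenerations $\lambda_{[l]}$: by Remark~\ref{remark:numbers_independent_of_positions_and_lengths} the tropical number does not depend on the lengths $|\lambda'_j|$, so
\begin{align*}
N_{d}\left(p_{\underline{n}}, \lambda'_{[l]} \right)=N_{d}\left(p_{\underline{n}}, \lambda_{[l]} \right).
\end{align*}
The right-hand side is now exactly of the shape to which Theorem~\ref{thm:generalized_kontsevich} applies.

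Applying Theorem~\ref{thm:generalized_kontsevich}(a) then expresses $N_{d}(p_{\underline{n}}, \lambda_{[l]})$ recursively in terms of smaller tropical counts. The point I would emphasize is that the terms on the right-hand side of \eqref{eq:generalized_kontsevich} carry the auxiliary multi line conditions $L_{e_i}$ produced by cutting the contracted bounded edge, so they are genuinely tropical numbers to which Theorem~\ref{thm:correspondence_thm_CRC} does \emph{not} apply. This is harmless for the claim: each such number is again of the form covered by Theorem~\ref{thm:generalized_kontsevich}(a) as long as it still carries a point condition, and by part~(b) once all point conditions have been stripped away, so the recursion can be iterated entirely within the tropical world. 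The iteration terminates at the two families of initial values described in Theorem~\ref{thm:generalized_kontsevich}, namely the point-only numbers $N_d(p_{\underline{n}})$ supplied by tropical Kontsevich's formula and the combinatorial cross-ratio-multiplicity numbers $N_0(L_a,L_b,\lambda^{\to e}_{\underline{l_1}})$ of \eqref{eq:initial_values_CR_mult}.

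The main (and essentially only) obstacle to flag is conceptual rather than computational: the recursion computes the desired classical invariant by a detour through tropical counts that themselves have no classical meaning, because no correspondence theorem is available once line conditions enter. I would therefore make explicit that the correspondence theorem is invoked only at the top level, to identify the classical count with $N_d(p_{\underline{n}},\lambda_{[l]})$, whereas the subsequent unwinding of the recursion is a statement about tropical numbers alone and needs no further correspondence input. With this delineation in place the corollary follows immediately by combining the two displayed equalities with Theorem~\ref{thm:generalized_kontsevich}.
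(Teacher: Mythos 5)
Your proposal is correct and matches the paper's intended argument exactly: the paper derives this corollary in one line by combining Theorem \ref{thm:correspondence_thm_CRC} with Remark \ref{remark:numbers_independent_of_positions_and_lengths} and Theorem \ref{thm:generalized_kontsevich}, which is precisely your chain $N^{\operatorname{class}}_{d}\left(p_{\underline{n}},\mu_{[l]}\right)=N_{d}\left(p_{\underline{n}},\lambda'_{[l]}\right)=N_{d}\left(p_{\underline{n}},\lambda_{[l]}\right)$ followed by the tropical recursion. Your added observation --- that the correspondence theorem is needed only at the top level, while the multi line conditions $L_{e_i}$ arising in the recursion stay entirely within the tropical world --- is a correct and useful clarification of a point the paper only makes in its introduction.
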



\begin{example}
We want to give an example of how to compute numbers we are looking for using our general Kontsevich's formula. Say we want to compute $N_{2}\left(p_{[3]},L_{4},L_{5}, \lambda_{[2]} \right)$. For degenerated cross-ratios
\begin{align*}
\lambda_1:=\lbrace 1,2,3,4\rbrace \quad \textrm{ and } \quad\lambda_2:=\lbrace 1,2,3,5\rbrace.
\end{align*}
Notice that \eqref{eq:gen_pos_condition_count} is satisfied so your input data makes sense. 
Recall the conventions we used for labeling ends: in this example, we want to count tropical stable maps $C$ of degree $2$ in $\mathbb{R}^2$ that have $5$ contracted ends. A contracted end labeled with $i$ satisfies the point condition $p_i$ for $i=1,2,3$ and satisfies the multi line condition $L_i$ for $i=4,5$. There is no non-contracted end which satisfies no condition. To use Theorem \ref{thm:generalized_kontsevich}, we need to fix a cross-ratio $\lambda_2'$ that degenerates to $\lambda_2$. We choose
\begin{align*}
\lambda'_2:=(12|35).
\end{align*}
If $C$ splits into $C_1,C_2$, then by Definition \ref{def:split_respecting_CR} ends $1,2$ are contracted ends of $C_1$, i.e. $p_1,p_2$ are satisfied in $C_1$, and $3,5$ are contracted ends of $C_2$, i.e. $p_3,L_5$ are satisfied in $C_2$. Therefore $\lambda_1$ is satisfied in $C_1$ such that $4$ is a contracted end of $C_1$ that satisfies $L_4$. If we go through the three cases of different types of splits using \eqref{eq:equation_for_1/1_split} and \eqref{eq:equation_for_2/0_split}, we see that the only possible split is the $2/0$ split
\begin{align*}
(1,p_1,p_2,L_4,\lambda_1\mid 1,p_3,L_5).
\end{align*}
Hence part (a) of Theorem \ref{thm:generalized_kontsevich} yields
\begin{align*}
N_{2}\left(p_{[3]},L_{4},L_{5}, \lambda_{[2]} \right)
=
N_{1}\left(p_1,p_2,L_4, \lambda_{1}^{\to e_1} \right)
\cdot
N_{1}\left(p_3,p_{e_2},L_5 \right),
\end{align*}
where the rightmost factor can be written as
\begin{align*}
N_{1}\left(p_3,p_{e_2},L_5 \right)=\omega(L_5)
\cdot
\underbrace{N_{1}\left(p_3,p_{e_2}\right)}_{=1}
\end{align*}
by tropical B\'ezout's Theorem \cite{FirstStepsIntersectionTheory}.

So it remains to calculate $N_{1}\left(p_1,p_2,L_4, \lambda_{1}^{\to e_1} \right)$. For that, we want to use Theorem \ref{thm:generalized_kontsevich} again. A stable map $C$ contributing to $N_{1}\left(p_1,p_2,L_4, \lambda_{1}^{\to e_1} \right)$ has $4$ contracted ends. A contracted end labeled with $i$ satisfies $p_i$ for $i=1,2$ and $L_i$ for $i=4$. The remaining contracted end is labeled with $e_1$ and satisfies no point condition. To stick to our convention of labeling ends with natural numbers, we relabel $e_1$ by $6$. Again, we need to fix a cross-ratio $\lambda'^{\to e_1}_1$ that degenerates to $\lambda^{\to e_1}_1=\lbrace 1,2,6,4\rbrace$. We choose
\begin{align*}
\lambda'^{\to e_1}_1:=(12|46).
\end{align*}
If $C$ splits into $C_1,C_2$ then $1,2$ are contracted ends of $C_1$, i.e. $p_1,p_2$ are satisfied in $C_1$, and $4,6$ are contracted ends of $C_2$, i.e. $L_4$ is satisfied by $C_2$ and there is one contracted end, labeled $6$, in $C_2$ that satisfies no condition. As before, we can go through all splits and notice that
\begin{align*}
(1,p_1,p_2\mid 0,L_4,6)
\end{align*}
is the only possible split. Hence part (a) of Theorem \ref{thm:generalized_kontsevich} yields
\begin{align*}
N_{1}\left(p_1,p_2,L_4, \lambda_{1}^{\to e_1} \right)
=
N_{1}\left(p_1,p_2,L_{e_1'} \right)
\cdot
N_{0}\left(L_4,L_{e_2'} \right),
\end{align*}
where
\begin{align*}
N_{1}\left(p_1,p_2,L_{e_1'} \right)=\underbrace{\omega(L_{e_1'})}_{=1}\cdot \underbrace{N_{1}\left(p_1,p_2\right)}_{=1}
\end{align*}
by tropical B\'ezout's Theorem and by Defintion of $L_{e_1'}$, and $N_{0}\left(L_4,L_{e_2'} \right)=\omega(L_4)$ by Theorem \ref{thm:generalized_kontsevich}.

In total, we calculated
\begin{align*}
N_{2}\left(p_{[3]},L_{4},L_{5}, \lambda_{[2]} \right)=\omega(L_4)\cdot \omega(L_5)
\end{align*}
for $\lambda_1,\lambda_2$ defined as above.
\end{example}

We now prove Theorem \ref{thm:generalized_kontsevich}, discuss the initial values of the recursion Theorem \ref{thm:generalized_kontsevich} provides and then proceed with tropical Kontsevich's formula which is a corollary of part (a) of Theorem \ref{thm:generalized_kontsevich}.

\begin{proof}[Proof of part (a) of Theorem \ref{thm:generalized_kontsevich}]
Using Remark \ref{remark:numbers_independent_of_positions_and_lengths}, we obtain
\begin{align*}
N_{d}\left(p_{\underline{n}},L_{\underline{\kappa}}, \lambda_{[l]} \right)
=
N_{d}\left(p_{\underline{n}},L_{\underline{\kappa}}, \lambda_{[l-1]},\lambda'_l \right)
\end{align*}
for a cross-ratio $\lambda'_l$ that degenerates to $\lambda_l$. Since $N_{d}\left(p_{\underline{n}},L_{\underline{\kappa}}, \lambda_{[l-1]},\lambda'_l \right)$ does not depend on $|\lambda'_l|$, choose it to be large as in Proposition \ref{prop:contracted_bounded_edge}. Hence each stable map contributing to $N_{d}\left(p_{\underline{n}},L_{\underline{\kappa}}, \lambda_{[l-1]},\lambda'_l \right)$ has a contracted bounded edge $e$ which can be cut as using Construction \ref{constr:cutting_contracted_bounded_edge} and thus gives rise to some splitting type $(d_1,\underline{n_1},\underline{\kappa_1},\underline{l_1},\underline{f_1}\mid d_2,\underline{n_2},\underline{\kappa_2},\underline{l_2},\underline{f_2})$ that respects $\lambda'_l$. Therefore
\begin{align}\label{eq:proof_generalized_kontsevich_1}
N_{d}\left(p_{\underline{n}},L_{\underline{\kappa}}, \lambda_{[l-1]},\lambda'_l \right)
=
\sum_{\substack{(d_1,\underline{n_1},\underline{\kappa_1},\underline{l_1},\underline{f_1}\mid d_2,\underline{n_2},\underline{\kappa_2},\underline{l_2},\underline{f_2})\\ \textrm{ is a split respecting }\lambda'_l}}
\sum_{C} \mult(C),
\end{align}
where the second sum goes over all stable maps $C$ that give rise to the split $(d_1,\underline{n_1},\underline{\kappa_1},\underline{l_1},\underline{f_1}\mid d_2,\underline{n_2},\underline{\kappa_2},\underline{l_2},\underline{f_2})$. Reordering the first sum of \eqref{eq:proof_generalized_kontsevich_1} as in \eqref{eq:generalized_kontsevich} and applying Lemmas \ref{lemma:counting_after_2/0_split}, \ref{lemma:counting_after_1/1_split} proves part (a) of Theorem \ref{thm:generalized_kontsevich}.
\end{proof}

\begin{proof}[Proof of part (b) of Theorem \ref{thm:generalized_kontsevich}]
We use notation from Lemma \ref{lemma:initial_values_1}, \ref{lemma:initial_values_2} and Proposition \ref{prop:contracted_bounded_edge_no_point_conditions}.
We use Remark \ref{remark:numbers_independent_of_positions_and_lengths}, i.e.
\begin{align}\label{eq:initial_values_proof_1}
N_{d}\left(L_{\underline{\kappa}}, \lambda_{[l]} \right)
=
N_{d}\left(L_{\underline{\kappa}}, \lambda_{[l-1]},\lambda_l' \right),
\end{align}
and conclude with Proposition \ref{prop:contracted_bounded_edge_no_point_conditions} that each stable map contributing to the right hand side of \eqref{eq:initial_values_proof_1} has a contracted bounded edge $e$ which is adjacent to a vertex $v'$ which is in turn adjacent to $e_1,e_2$. Notice that cutting $e$ yields a $2/0$ split. Thus Lemma \ref{lemma:counting_after_2/0_split} gives us equation \eqref{eq:initial_values_CR_mult}.
\end{proof}

\begin{proof}[Proof of the initial values part of Theorem \ref{thm:generalized_kontsevich}]
Notice that equations \eqref{eq:generalized_kontsevich} and \eqref{eq:initial_values_recursion} of Theorem \ref{thm:generalized_kontsevich} allow us to successively reduce the number of point, multi line or cross-ratio conditions. There are three cases:
\begin{itemize}
\item[(1)]
We run out of cross-ratio conditions. Then, if there are point conditions left, tropical B\'ezout's Theorem \cite{FirstStepsIntersectionTheory} can be applied to reduce the initial value problem to the numbers $N_{d}\left(p_{\underline{n}}\right)$ which tropical Kontsevich's formula (Corollary \ref{cor:tropical_kontsevich_formula}) provides. If there are no point conditions left, then
\begin{align*}
N_{d}\left(L_{\underline{\kappa}}\right)=0
\end{align*}
for all $d\neq 0$ applies. Otherwise $d=0$, $\#\underline{\kappa}=\#\lbrace a,b\rbrace=2$ and $\#\underline{f}=1$ such that
\begin{align*}
N_{0}\left(L_{{\kappa}}\right)=\omega(L_a)\cdot\omega(L_b)
\end{align*}
holds.
\item[(2)]
We run out of point conditions. Then \eqref{eq:initial_values_recursion} reduces the initial value problem to calculating $N_{0}\left(L_a,L_b, \lambda_{\underline{l_1}}^{\to e} \right)$. This can be done via \eqref{eq:initial_values_CR_mult}.

For equation \eqref{eq:initial_values_CR_mult}, notice that each edge of a tropical stable map of degree $0$ must be contracted. Thus there cannot be a bounded edge since all cross-ratios are degenerated. Hence there is exactly one vertex $v'$ in such a stable map whose position is determined by the unique point of intersection of $L_a$ and $L_b$. Therefore there is exactly one stable map contributing to $N_{0}\left(L_a,L_b, \lambda_{\underline{l_1}}^{\to e} \right)$ whose multiplicity is $\omega(L_a)\cdot\omega(L_b)\cdot \mult_{\CR}(v')$ by Proposition \ref{prop:cr_mult}.
\item[(3)]
We run out of multi line conditions. Then \eqref{eq:generalized_kontsevich} can still be applied, so cases (1) and (2) apply.
\end{itemize}
\end{proof}

\begin{corollary}[Tropical Kontsevich's formula, \cite{KontsevichPaper}]\label{cor:tropical_kontsevich_formula}
For $\#\underline{n}=3d-1>0$ general positioned point conditions the equality
\begin{align*}
N_{d}\left( p_{\underline{n}} \right)=\sum_{\substack{d_1+d_2=d \\ d_1,d_2>0}} \left( d_1^2 d_2^2 \cdot {3d-4 \choose 3d_1-2}- d_1^ 3d_2 \cdot {3d-4 \choose 3d_1-1} \right) N_{d_1}\left( p_{\underline{n_1}} \right)N_{d_2}\left( p_{\underline{n_2}} \right)
\end{align*}
holds and provides a recursion to calculate $N_{d}\left( p_{\underline{n}} \right)$ from the initial value $N_{1}\left( p_1,p_2 \right)=1$.
\end{corollary}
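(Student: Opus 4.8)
The plan is to deduce the recursion for $N_d(p_{\underline n})$ with $\#\underline n=3d-1$ from part (a) of Theorem \ref{thm:generalized_kontsevich} by introducing an auxiliary configuration carrying two line conditions and one cross-ratio, and then exploiting that a cross-ratio count is independent of how the four entries are grouped. The case $d=1$ is the base value $N_1(p_1,p_2)=1$, so I fix $d\geq 2$ throughout.

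\textbf{Setup.} I would count degree $d$ tropical stable maps subject to $3d-2$ point conditions $p_1,\dots,p_{3d-2}$, two weight-one multi line conditions $L_a,L_b$, and one cross-ratio whose four entries are the two point-marked ends $1,2$ and the two line-marked ends $a,b$. By \eqref{eq:gen_pos_condition_count} (in which the line conditions cancel out of the dimension bookkeeping) this is a legitimate zero-dimensional count $N_d(p_{\underline n},L_a,L_b,\lambda)$. Since a cross-ratio is a pull-back of a point of $\mathcal M_{0,4}$ and pull-backs of different points of $\mathcal M_{0,4}$ are rationally equivalent, Remark \ref{remark:facts_about_rational_equivalence} shows this number is independent of the grouping; in particular
\[
N_d\big(p_{\underline n},L_a,L_b,(1\,2\,|\,a\,b)\big)=N_d\big(p_{\underline n},L_a,L_b,(1\,a\,|\,2\,b)\big).
\]

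\textbf{Expanding both groupings and reducing by B\'ezout.} Next I would apply part (a) of Theorem \ref{thm:generalized_kontsevich} to each side and reduce every resulting factor to a pure point count by tropical B\'ezout \cite{FirstStepsIntersectionTheory}: a fixed degree $d_i$ curve meets a weight-one multi line in $d_i$ points, so each line condition on a contracted end multiplies a count through $3d_i-1$ points by $d_i$, i.e.\ $N_{d_i}(p_{\underline{n_i}},L^{(1)},\dots,L^{(r)})=d_i^{\,r}N_{d_i}(p_{\underline{n_i}})$ when $\#\underline{n_i}=3d_i-1$. For the grouping $(1a|2b)$ every surviving split is a $1/1$ split by \eqref{eq:equation_for_1/1_split}, each component carries one original line plus the artificial line $L_{e_i}$ of Lemma \ref{lemma:counting_after_1/1_split}, and distributing the $3d-4$ free points gives
\[
N_d(p_{\underline n},L_a,L_b,\lambda)=\sum_{\substack{d_1+d_2=d\\ d_1,d_2>0}} d_1^{2}d_2^{2}\binom{3d-4}{3d_1-2}N_{d_1}(p_{\underline{n_1}})N_{d_2}(p_{\underline{n_2}}).
\]
For the grouping $(12|ab)$ the $1/1$ splits analogously produce $d_1 d_2^{3}\binom{3d-4}{3d_1-3}N_{d_1}N_{d_2}$, but now Lemma \ref{lemma:counting_after_2/0_split} contributes one distinguished $2/0$ split: the component carrying $a,b$ may be a degree-$0$ bubble, rigid because it is pinned to the single point $L_a\cap L_b$, contributing $N_0(L_a,L_b)=\omega(L_a)\omega(L_b)=1$ by \eqref{eq:initial_values_CR_mult}, while the complementary degree-$d$ component carries all $3d-2$ points together with the extra incidence $p_e=L_a\cap L_b$ and is therefore counted by $N_d$. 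Hence
\[
N_d(p_{\underline n},L_a,L_b,\lambda)=N_d+\sum_{\substack{d_1+d_2=d\\ d_1,d_2>0}} d_1 d_2^{3}\binom{3d-4}{3d_1-3}N_{d_1}N_{d_2}.
\]

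\textbf{The main obstacle and the conclusion.} The crux, and the step I expect to be most delicate, is to verify that these are the \emph{only} surviving contributions, i.e.\ that all remaining $2/0$ splits vanish. Here the asymmetry of the two groupings is decisive: a $2/0$ split forces one component to be rigid while carrying a free end $e_i$, and a dimension count shows that, for positive degree, such a component would have to pass through $3d_i$ general points, which is impossible, so its count is $0$. The only exception is a \emph{degree-zero} bubble, and such a bubble can be fixed only by carrying two line conditions meeting in a point; this occurs precisely for $(12|ab)$, where the two lines lie together, and never for $(1a|2b)$, where each side has one point and one line and hence cannot pin a degenerate component. Equating the two expansions and solving for $N_d$ gives
\[
N_d=\sum_{\substack{d_1+d_2=d\\ d_1,d_2>0}}\Big(d_1^{2}d_2^{2}\binom{3d-4}{3d_1-2}-d_1 d_2^{3}\binom{3d-4}{3d_1-3}\Big)N_{d_1}N_{d_2}.
\]
Finally I would rewrite the negative term: reindexing $d_1\leftrightarrow d_2$ in the symmetric summand and using $\binom{3d-4}{3d_2-3}=\binom{3d-4}{3d_1-1}$ turns $d_1 d_2^{3}\binom{3d-4}{3d_1-3}$ into $d_1^{3}d_2\binom{3d-4}{3d_1-1}$, which is exactly the coefficient in the statement. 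Together with $N_1=1$ this is the asserted recursion.
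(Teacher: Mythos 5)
Your proposal is correct and follows essentially the same route as the paper's own proof: the same auxiliary count with two weight-one line conditions and one cross-ratio whose entries are two point-marked and two line-marked ends, the same comparison of the two groupings via rational equivalence, the same reduction of all factors by tropical B\'ezout, the same exceptional $2/0$ contribution $N_0(L_a,L_b)\cdot N_d$ from the degree-zero bubble pinned at $L_a\cap L_b$, and the same final reindexing of binomial coefficients. The only minor difference is your vanishing argument for the remaining $2/0$ splits (a dimension count showing the rigid side would need to pass through $3d_i$ general points), where the paper instead glues the two components and derives a general-position contradiction at the fixed $3$-valent vertex adjacent to the contracted edge; both arguments are valid.
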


\begin{proof}
Let $p_{\underline{n}}$ be point conditions, let $L_a,L_b$ be line conditions, i.e. multi lines with weights $\omega(L_a)=\omega(L_b)=1$ and let $\lambda=\lbrace L_a,L_b,p_c,p_d \rbrace $ be a degenerated cross-ratio, where $p_c,p_d\in p_{\underline{n}}$ are points and the labels are chosen in such a way that $a<b<c<d$.

Consider the cross-ratio $\lambda':=(L_ap_c|L_bp_d)$ that degenerates to $\lambda$.
We claim that \eqref{eq:generalized_kontsevich} reduces to
\begin{align}\label{eq:corollary_kontsevich_claim1}
\begin{split}
N_{d}\left(p_{\underline{n}},L_a,L_b, \lambda \right)
=
&\sum_{\substack{(d_1,\underline{n_1}\mid d_2,\underline{n_2})\\ \textrm{ is a $1/1$ split respecting }\lambda'}}
N_{d_1}\left(p_{\underline{n_1}}, L_a, L_{e_1} \right)
\cdot
N_{d_2}\left(p_{\underline{n_2}}, L_b, L_{e_2}  \right)
\end{split}
\end{align}
in our case. Since we only have two line conditions and no contracted ends without point or line conditions, each split we deal with can be written as $(d_1,\underline{n_1}\mid d_2,\underline{n_2})$ since $\lambda'$ determines the distribution of $L_a$ and $L_b$ in each possible split respecting $\lambda'$. To show the claim, it remains to show that the last two sums of \eqref{eq:generalized_kontsevich} vanish. For that it is, because of symmetry, sufficient to show that the second sum vanishes. Let $N_{d_1} \left(p_{\underline{n_1}},L_a\right) \cdot N_{d_2}\left(p_{\underline{n_2}},p_{e_2},L_b \right)$ be a factor of the second sum. Let $C_1$ be a tropical stable map contributing to $N_{d_1} \left(p_{\underline{n_1}},L_a\right)$ and let $C_2$ be a tropical stable map contributing to $N_{d_2}\left(p_{\underline{n_2}},p_{e_2},L_b \right)$. Using Remark \ref{remark:split_of_conditions}, $C_1$ and $C_2$ can be glued to form a tropical stable map $C$ which has a contracted bounded edge $e$. Since our split was a $2/0$ split, the $3$-valent vertex $v_1$ of $C$ that is adjacent to $e$ is fixed. Hence there is a contracted end satisfying a point condition that is adjacent to $v_1$. Thus there is another contracted end adjacent to $v_1$ which needs to satisfy either a point or a line condition which is a contradiction since all conditions are in general position.

Now consider the cross-ratio $\tilde{\lambda}':=(L_aL_b|p_cp_d)$ that degenerates to $\lambda$.
We claim that \eqref{eq:generalized_kontsevich} reduces to
\begin{align}\label{eq:corollary_kontsevich_claim2}
\begin{split}
N_{d}\left(p_{\underline{n}},L_a,L_b, \lambda \right)
=
&\sum_{\substack{(d_1,\underline{n_1}\mid d_2,\underline{n_2})\\ \textrm{ is a $1/1$ split respecting }\tilde{\lambda}'}}
N_{d_1}\left(p_{\underline{n_1}}, L_a,L_b, L_{e_1} \right)
\cdot
N_{d_2}\left(p_{\underline{n_2}}, L_{e_2}  \right)\\
&+
N_{0}\left( L_a,L_b \right)
\cdot
N_{d}\left(p_{\underline{n}}, p_{e_2}  \right)
\end{split}
\end{align}
in this case. As before, splits can be written as $(d_1,\underline{n_1}\mid d_2,\underline{n_2})$. The last sum of \eqref{eq:generalized_kontsevich} vanishes with the same arguments from before. It remains to see that the second sum of \eqref{eq:generalized_kontsevich} equals the product $N_{0}\left( L_a,L_b \right) \cdot N_{d}\left(p_{\underline{n}}, p_{e_2}  \right)$. If $d_1>0$ and we consider a product contributing to the last sum, then the same arguments from before show that this product vanishes. Hence the only remaining contribution from the second sum that is possible is $N_{0}\left( L_a,L_b \right) \cdot N_{d}\left(p_{\underline{n}}, p_{e_2}  \right)$.

Notice that there are no cross-ratios on the right-hand sides of \eqref{eq:corollary_kontsevich_claim1} and \eqref{eq:corollary_kontsevich_claim2} such that tropical B\'ezout's Theorem \cite{FirstStepsIntersectionTheory} yields
\begin{align*}
\begin{split}
&\sum_{\substack{(d_1,\underline{n_1}\mid d_2,\underline{n_2})\\ \textrm{ is a $1/1$ split respecting }\lambda'}}
d_1^2 N_{d_1}\left(p_{\underline{n_1}}\right)
\cdot
d_2^2 N_{d_2}\left(p_{\underline{n_2}}\right)\\
&=
\sum_{\substack{(d_1,\underline{n_1}\mid d_2,\underline{n_2})\\ \textrm{ is a $1/1$ split respecting }\tilde{\lambda}'}}
d_1^3 N_{d_1}\left(p_{\underline{n_1}} \right)
\cdot
d_2 N_{d_2}\left(p_{\underline{n_2}} \right)
+
N_{0}\left( L_a,L_b \right)
\cdot
N_{d}\left(p_{\underline{n}}, p_{e_2}  \right)
\end{split}
\end{align*}
since $\omega(L_a)=\omega(L_b)=\omega(L_{e_1})=\omega(L_{e_2})=1$. Using $N_{0}\left( L_a,L_b \right)=\omega(L_a)\omega(L_b)=1$, we obtain
\begin{align*}
N_{d}\left(p_{\underline{n}}, p_{e_2}  \right) =\\
\sum_{\substack{(d_1,\underline{n_1}\mid d_2,\underline{n_2})\\ \textrm{ is a $1/1$ split respecting }\lambda'}}
&d_1^2 d_2^2N_{d_1}\left(p_{\underline{n_1}}\right)
N_{d_2}\left(p_{\underline{n_2}}\right)
-
\sum_{\substack{(d_1,\underline{n_1}\mid d_2,\underline{n_2})\\ \textrm{ is a $1/1$ split respecting }\tilde{\lambda}'}}
d_1^3 d_2 N_{d_1}\left(p_{\underline{n_1}} \right)
N_{d_2}\left(p_{\underline{n_2}} \right).
\end{align*}
Since all conditions we started with are in general position
\begin{align*}
3d=\#\underline{n}+1+1
\end{align*}
holds, i.e. each choice of $\underline{n_1},\underline{n_2}$ in a split for fixed $d_1,d_2$ is a choice of distributing the remaining $3d-4$ points. There are ${3d-4 \choose 3d_1-2}$ choices if $p_c\in\underline{n_1}$ and ${3d-4 \choose 3d_1-1}$ choices if $p_c,p_d\in\underline{n_2}$. Using $3d_i=\#\underline{n_i}+1$ provides the index for the sum we are looking for.
\end{proof}

\subsection*{Further generalizations}
The same methods Gathmann and Markwig used to prove tropical Kontsevich's formula \cite{KontsevichPaper} also yield a recursive formula for counting rational tropical stable maps of bidegree $(d_1,d_2)$ (i.e. with ends of directions $(\pm 1,0),(0,\pm 1)$) to $\mathbb{R}^2$ that satisfy point conditions, see \cite{FranzMarkwig}. Analogously, the methods developed in this paper yield a recursive formula for rational tropical stable maps to $\mathbb{R}^2$ of bidegree $(d_1,d_2)$ that satisfy point conditions, \textit{degenerated} multi line conditions and cross-ratio conditions.

\hyphenation{Kaisers-lautern} 
\bibliographystyle{alpha}
\bibliography{literatur}

\end{document}